\newcommand{\wdtld}[1]{\widetilde{#1}}
\newcommand{\wdht}[1]{\widehat{#1}}
\newcommand{\hattld}[1]{\widehat{\widetilde{#1}}}
\newcommand{\TT}{\mathbf{T}}
\newcommand{\north}{{*_N}}
\newcommand{\south}{{*_S}}
\newcommand{\HH}{{\mathbb{H}^3}}
\newcommand{\PSL}{{PSL(2,\CC)}}
\newcommand{\isomHH}{{\textrm{Isom}^+(\HH)}}
\newcommand{\cT}{{\mathcal{T}}}
\newcommand{\geomlabel}{{\Omega^{\text{geo}}}}
\newcommand{\geomw}[1]{{w_{#1}^{\text{geo}}}}
\newcommand{\geomu}[1]{{u_{#1}^{\text{geo}}}}
\DeclareMathOperator{\vol}{vol}
\DeclareMathOperator{\argmax}{argmax}
\newcommand{\cev}[1]{\overset{\leftarrow}{#1}}
\newcommand{\matw}[1]{\begin{pmatrix}
	0 & #1 \\
	1 & 0
	\end{pmatrix}}
\newcommand{\matu}[1]{\begin{pmatrix}
	1 & #1 \\
	0 & 1
	\end{pmatrix}}
\newcommand{\matwil}[1]{(0 \; #1; 1 \; 0)}
\newcommand{\matuil}[1]{(1 \; #1; 0 \; 1)}
\author{Alice Kwon \textsuperscript{\textdagger}}
\author{Byungdo Park \textsuperscript{\ddag}}
\author{Ying Hong Tham \textsuperscript{*}}
\address{\textsuperscript{\textdagger} \small Alice Kwon, Department of Science, SUNY Maritime, 6 Pennyfield Avenue, Bronx, NY 10465, USA}
\email{akwon@sunymaritime.edu}
\address{\textsuperscript{\ddag} \small Byungdo Park,
	Department of Mathematics Education, Chungbuk National University, Cheongju 28644, Republic of Korea} 
\email{byungdo@cbnu.ac.kr}
\address{\textsuperscript{*} \small Ying Hong Tham
	Fachbereich Mathematik, Universit\"at Hamburg, Bundesstra{\ss}e 55, 20146 Hamburg, Germany} 
\email{ying.hong.tham@uni-hamburg.de}
\thanks{This work was supported by Deutsche Forschungsgemeinschaft via the Cluster of Excellence EXC 2121 ``Quantum Universe'' - 390833306 and the National Research Foundation of Korea (NRF) grant funded by the Korean government (MSIT) (No. 2020R1G1A1A01008746).}
\subjclass[2020]{Primary 57K32; Secondary 57M50, 57K10, 52C26}
\keywords{Augmented link, Circle packing, Hyperbolic knot theory, Geometric triangulation, Thurston's gluing equations, Thurston's completeness equations}
\begin{document}


\title{Generalization of the Thistlethwaite--Tsvietkova Method}

\maketitle

\begin{abstract}
Thurston's equations determine the hyperbolic structure
of a 3-manifold with a triangulation.
In \cite{TTmethod}, an alternative method was developed
for link complements in $S³$ depending on the link diagram,
where a set of labels are associated to the vertices and edges of the link diagram,
and one attempts to solve a set of equations on the labels.
Under certain conditions, there exists a solution to these equations
that corresponds to the complete hyperbolic structure,
but in general it is difficult to determine which one it is.
We generalize this method to 3-manifolds with a polyhedral decomposition,
and show that solutions to the equations correspond to $\PSL$-representations
of the fundamental group, and that the solution with the largest volume corresponds
to the complete hyperbolic structure.
We also consider different classes of complements of links,
in particular links in the thickened torus and fully augmented links.
For the latter, we establish a correspondence between solutions satisfying some criteria
and circle packings realizing the region graph associated to the fully augmented link.
\end{abstract}

%

\tableofcontents

\section{Introduction}
\label{s:intro}

In \cite{TTmethod}, Thistlethwaite and Tsvietkova describe an 
alternative method for calculating the hyperbolic structure on the complement
of a link in $S³$, which we refer to as the \emph{TT method}.
They produce a set of equations depending on the link diagram
without resorting to a triangulation of the link complement;
the unknowns in these equations are meant to capture and quantify
parallel transport in the complete hyperbolic structure.
We refer to solutions to these equations as
\emph{algebraic solutions (to the TT method)};
under certain conditions, there is a unique algebraic solution which corresponds
to the complete hyperbolic geometry on the link complement,
and we refer to it as the \emph{geometric labeling}.
A major challenge to the usability of the TT method is that
it is generally difficult to know which of the algebraic solutions
is the geometric solution.
The goal of this paper is to provide some techniques to overcome this challenge.

We generalize the TT method to cusped hyperbolic 3-manifolds.
This generalized method depends on a choice of an ideal polyhedral decomposition
of the 3-manifold, which is analogous to the dependence of the
original TT method on a choice of a link diagram.
Indeed, the original TT method is the application of our generalization
to the Menasco decomposition of the link complement based on a given link diagram.
This generalization also naturally specializes to (the complements of)
other types of links, in particular to links in the thickened torus
and a class of links called \emph{fully augmented links (FALs)}.

We then describe a sort of developing map for an algebraic solution $Ω$,
which we call the \emph{geometric realization of $Ω$}
(see \secref{s:alg-to-geom-reconstruction}).
In brief, the geometric realization is a map that assigns
a point at infinity of hyperbolic space $\HH$
to each end of the universal cover of $M$,
such that the labels in $Ω$ are equal to the \emph{geometric labels}
between corresponding points at infinity.
The geometric realization serves as an intermediate step that connects
algebraic solutions to $\PSL$-representations of the fundamental group of $M$,
recovering the direct construction found already in \cite{TTmethod}
(see also \cite{intercusp}).
Indeed, based on \prpref{p:alg-soln-repn-class},
we can view the TT method as
``picking a basis to study $\PSL$-representations''. More precisely,
depending on the choice of polyhedral decomposition of $M$,
it picks out a subset of the set of $\PSL$-representations and simultaneously
provides a set of coordinates with which to locate these representations.
The geometric realization also facilitates the discussion in other contexts,
in particular it is a convenient tool to use in defining and discussing volume
(see \secref{s:alg-vol}).

This brings us to the first main contribution of the paper:
we prove that, under certain conditions on the link diagram
(the polyhedral decomposition in the generalization),
the algebraic solution with the maximal volume is the geometric labeling.
We reproduce the theorem here:
\\

\noindent\textbf{Theorem \ref{t:max-vol}}\emph{
Let $M$ be a cusped hyperbolic 3-manifold,
and let $τ$ be an ideal polyhedral decomposition of $M$
such that every edge is geodesic-like.
Fix meridians $μ_i$ for each end of $M$,
determining a canonical parametrization of $M$.
Then the algebraic solution with the maximal volume is the geometric labeling:
\[
\geomlabel = \underset{Ω: \text{ alg.\,sol.}}{\argmax} \  \vol(Ω)
\]
}

The proof relies on the results of \cite{repnvol},
which concerns volumes of $\PSL$-representations of a 3-manifold.
In particular, \cite{repnvol} proves a similar statement but for
$\PSL$-representations.
We translate these results into results about algebraic solutions
via the use of the geometric realization.

Our second main contribution concerns the application of the (generalized)
TT method to fully-augmented links (FALs) in $S³$ or $T² \times (-1,1)$.
From \cite{lackenby}, it is known that to every FAL is associated
a circle packing realizing the FAL's \emph{region graph}.
Such a circle packing is unique, but if we loosen the definition of circle packing
(e.g. drop the univalence condition),
then many circle packings are possible.
We find that these new circle packings actually correspond to algebraic solutions
satisfying some criteria (mostly derived from symmetry considerations).
We restate the main results concerning this correspondence as follows:
\\

\noindent\textbf{Theorem \ref{t:FAL-circ-packing}, Corollary \ref{c:geometric}}
\emph{There is a 1-to-1 correspondence between
algebraic solutions to the TT method for a FAL (satisfying some criteria)
and circle packings realizing the region graph associated to the FAL.
Moreover, we give two sets of additional criteria on algebraic solutions
that correspond to certain properties on circle packings,
namely local univalence and locally order-preserving respectively.\\
\indent
As a consequence, the algebraic solution that satisfies all the additional criteria
corresponds to a circle packing that is
locally univalent and locally order-preserving, which implies univalence.
In other words, such an algebraic solution must be the geometric labeling.
}
\\

Let us provide an overview of the paper.
We provide some background in \secref{s:background},
where we recall the original TT method (\secref{s:recap})
as well as fully-augmented links (\secref{s:FAL}) and
their relation with circle packings (\secref{s:FAL-circ}).
Next, in \secref{s:TT-3mfld},
we generalize the TT method to 3-manifolds with toric end,
and develop tools to study algebraic solutions.
We establish the connection between algebraic solutions and $\PSL$-representations
via geometric realizations (\secref{s:alg-to-geom-reconstruction}),
and prove the volume-maximzing property of the geometric labeling (\thmref{t:max-vol}).
Then in \secref{s:application-links},
we discuss the application of the generalized TT method
to links in the thickened torus $T² \times (-1,1)$,
as well as to FALs in $S³$ and $T² \times (-1,1)$.
We show that the geometric labeling for FALs satisfy some criteria
(Sections \ref{s:TT-FAL-S3-eqn-simplified}, \ref{s:TT-FAL-T2-eqn-simplified}).
Conversely, we show that these criteria are sufficient conditions
for an algebraic solution to be the geometric labeling (\corref{c:geometric}),
which is established by developing the correspondence between
algebraic solutions and circle packings
(\secref{s:FAL-circ-packing}), \thmref{t:FAL-circ-packing}).
Finally, we work through some examples in \secref{s:examples}.

We thank the reviewer for many helpful comments,
in particular for bringing our attention to \cite{repnvol}.

\subsection{Conventions and Notations}
\label{s:conventions}
\ \\
We denote the upper-half space by $\HH = \{(x,y,z) \,|\, z > 0\} ⊆ \RR³$,
We also implicitly identify the $x$-$y$-plane with $\CC$,
so that we may write $\HH = \{(w = x + yi,z) \,|\, z > 0\} =
\CC \times \RR_{>0}$.
We identify $\PSL$ with the group of orientation-preserving isometries of $\HH$,
denote by $\isomHH$, by acting on the boundary $∂\HH = \CC ∪ \{∞\} \simeq
\mathbb{P}¹$, that is,
the action of a matrix $(a \; b; c \; d) :=
\begin{pmatrix}
a & b\\
c & d
\end{pmatrix}
∈ \PSL$ on $\HH$
is the isometry whose action on the boundary is the fractional transformation
$w \mapsto \frac{aw + b}{cw + d}$.

Horospheres in $\HH$ will be oriented such that
if we remove the corresponding horoball,
then the orientation is the outward orientation;
in particular, the horosphere $\{z = 1\} ⊆ \HH$
is oriented so that its projection to $\CC \times \{0\} ⊆ ∂\HH$
is orientation preserving.
Note that this convention is opposite to that of \cite{TTmethod}.
We think that this convention makes more sense as it agrees with the
outward orientation, which is a generally held convention;
it is also convenient to have the projection to the $xy$-plane be
orientation-preserving, and in face a ``parametrization''
(as in \defref{d:param-horosphere}).
To convert between our convention and theirs,
simply apply complex conjugation to the labels.



\section{Background}
\label{s:background}

\subsection{Recap of the TT method}
\label{s:recap}
\ \\
Let us briefly recall the methods in \cite{TTmethod},
with some minor differences in conventions,
as well as some new terminology for clarification.
Just as Thurston's gluing/completeness equations can be set up
for any triangulation of a 3-manifold,
the TT method can be set up for any link diagram regardless of hyperbolicity
or choice of link diagram.
We will first describe the setup for an arbitrary link $L$
with an arbitrary link diagram $D$.

Assign to each crossing $c$ of $D$ an unknown $w_c$,
called a \emph{crossing label},
and to each pair $(\vec{e},R)$, consisting of an oriented edge $\vec{e}$ of $D$
and a region $R$ of $D$ that meets $e$, an unknown $u_{\vec{e},R}$,
called an \emph{edge label},
such that $u_{\cev{e},R} = -u_{\vec{e},R}$.
We call such a collection of unknowns a \emph{labeling} of $D$.

These unknowns are meant to record certain local data about
the (expected) complete hyperbolic geometry on the link complement.
Let $T_i ⊆ ∂N(L)$ be peripheral tori, that is,
connected components of the boundary of a tubular neighbourhood of $L$.
The crossing label is closely related to the \emph{crossing arc $γ_c$}
which is an arc that traverses between the over- and underpassing segments
at the crossing $c$, while the edge label is closely related to the
\emph{peripheral edge $\vec{ε}_{\vec{e},R}$},
which is an arc that lives on the peripheral torus $T_i$ surrounding $e$
that runs parallel to $\vec{e}$.
More precisely, if $\vec{e}$ goes from crossing $c$ to $c'$,
and $q,q'$ are the intersections of $γ_c,γ_{c'}$ with $T_i$ respectively,
then $\vec{ε}_{\vec{e},R}$ is taken to be the ``simplest'' arc in $T_i$
going from $q$ to $q'$;
if $e$ is overpassing at $c$ and underpassing at $c'$ (or vice versa),
then there are two non-homotopic ``simplest'' paths,
so we disambiguate them with $R$ (see \cite[Fig. 3]{TTmethod}).

Fix an orientation on $L$.
We say that a labeling is an \emph{algebraic solution}
if it satisfies the following equations:
\begin{enumerate}
\item \emph{edge equation} for $\vec{e}$
	(see \cite[Sec. 3]{TTmethod}): if $R$ and $R'$ are the regions of $D$
	to the left and right of $\vec{e}$,
	and $\vec{e}$ agrees with the orientation of $L$,
	then
	\begin{equation}
	\label{e:edge-eqn-original}
		u_{\vec{e},R'} - u_{\vec{e},R} = \kappa
	\end{equation}
	where
	\begin{itemize}
	\item $\kappa = 1$ if $\vec{e}$ goes from underpassing to overpassing
	(equivalently, if $\vec{ε}_{\vec{e},R'} \vec{ε}_{\vec{e},R}^\inv$
	is homotopic to a meridian wrapping around $L$
	in the direction following the right-hand grip rule),
	\item $\kappa = -1$ if $\vec{e}$ goes from overpassing to underpassing,
	\item $\kappa = 0$ otherwise.
	\end{itemize}
\item \emph{region equation} for $R$
	(see \cite[Eqn. (4)]{TTmethod}):
	if the boundary of $R$ is $\vec{e}₁ \vec{e}₂ \cdots \vec{e}_n$,
	with $\vec{e}_i$ going from crossing $c_{i-1}$ to $c_i$,
	then
	\begin{equation}
	\label{e:region-eqn-original}
	\matu{u_{\vec{e}₁,R}}
	\matw{w_{c₁}}
	\;
	\cdots
	\;
	\matu{u_{\vec{e}_n,R}}
	\matw{w_{c_n}}
	\;
	\sim
	\;
	\matu{0}
	\end{equation}
	where $\sim$ denotes equivalence in $\PSL$.
\end{enumerate}

Note that in \cite{TTmethod}, they first formulate the region equation
in terms of shape parameters \cite[Eqn. (1)]{TTmethod},
then prove that it can be rewritten in the above form \cite[Prop. 4.2]{TTmethod}.
Formulating the region equations directly in terms of crossing and edge labels
as above reduces the requirements on the link diagram;
see \rmkref{r:taut-shape-param} for a discussion.

When $L$ is hyperbolic, and $D$ satisfies some condition,
then it is possible to construct a labeling,
which we call the \emph{geometric labeling},
that is an algebraic solution,
and reflects the hyperbolic geometry of the link complement,
and in fact contains all the information to reconstruct it.
The precise condition is that all crossing arcs must be ``geodesic-like'',
(see \defref{d:geodesic-like}, also \prpref{p:geom-sol-exist});
in \cite{TTmethod}, this condition is met by the stronger condition of ``tautness''
(see \defref{d:taut}).


\ \\
\subsection{Fully Augmented Links}
\label{s:FAL}
\ \\
We briefly review \emph{fully augmented links}, or \emph{FAL}s for short,
and the decomposition/triangulation of their complements,
to which we apply the generalized TT method.
We refer to \cite{purcell} for a comprehensive introduction
to FALs in $S³$, and to \cite{kwon2020} for FALs in $\TT$,
where $\TT$ denotes the thickened torus $T² × (-1,1)$.

\begin{definition}\label{def:falinT2}
A \emph{fully augmented link diagram in $S²$ (resp. $T²$)}
is a link diagram $D$ that is obtained from a
twist-reduced link diagram $D(K)$ in $S²$ (resp. $T²$)
of a link $K$ in $S³$ (resp. $\TT$) as follows:
\begin{enumerate}
\item \emph{augment} every twist region (maximal string of bigons),
	i.e. by adding a single unknotted component, an \emph{augmentation circle},
	around every twist region,
\item remove all full-twists (pairs of crossings in a twist region),
\item lay the augmentations ``as flat as possible'' so that they each make
	four crossings.
\end{enumerate}
See Figure \ref{fig:falS3}.

A \emph{fully augmented link (FAL)} in $S³$ (resp. $\TT$) is a link
that has a fully augmented link diagram in $S²$ (resp. $T²$).
\end{definition}

\begin{definition}
\label{d:bow-tie-graph}
The \emph{bow-tie graph} $B_L$ of a FAL $L$ in $S³$ or $\TT$
is obtained by removing all half-twists,
replacing each augmentation circle by a pair of triangles,
i.e. a \emph{bow-tie},
and shrink each segment of the link to a vertex.
See Figures \ref{fig:falDecomp}, \ref{f:DL-BL}.
\end{definition}

\begin{figure}
\centering
\includegraphics[height=4cm]{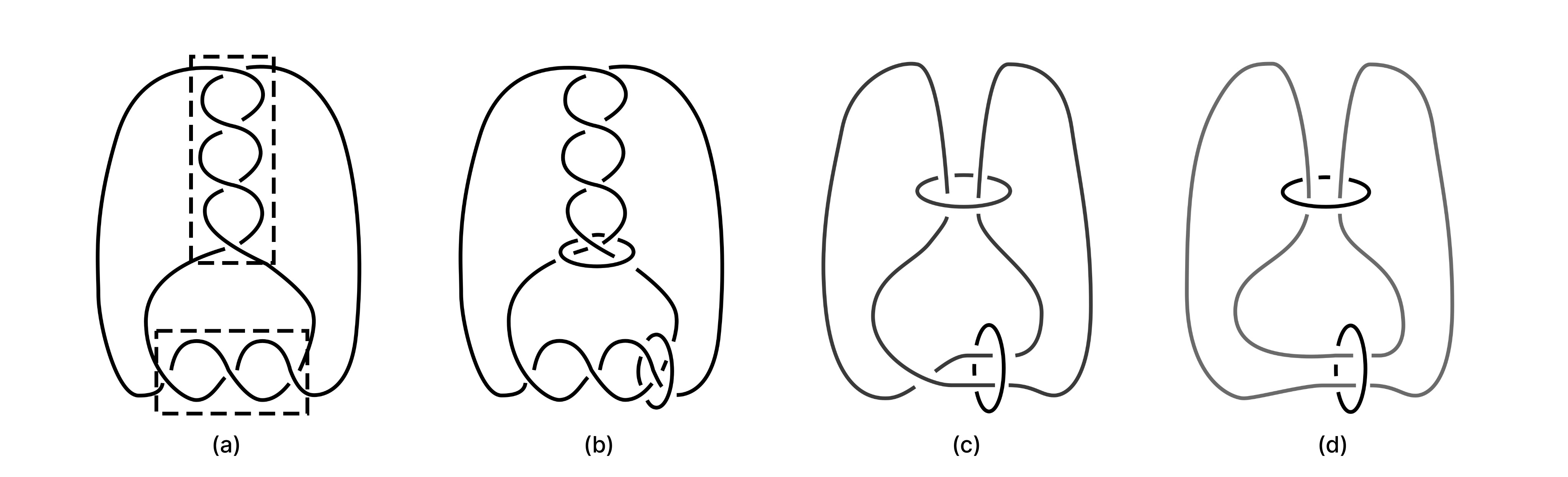}
\caption{(a) Link diagram of $K$ (b) crossing circles added to each twist region 
(c) the third picture is a fully augmented link diagram with all full-twists 
removed (d) fully augmented link diagram with no half-twists.
Note that removing a full-twist from a FAL does not change the
homeomorphism class of its complement,
but removing a half-twist might.}
\label{fig:falS3}
\end{figure}

\begin{figure}
\centering
\begin{tabular}{cccc}
\includegraphics [height=3cm]{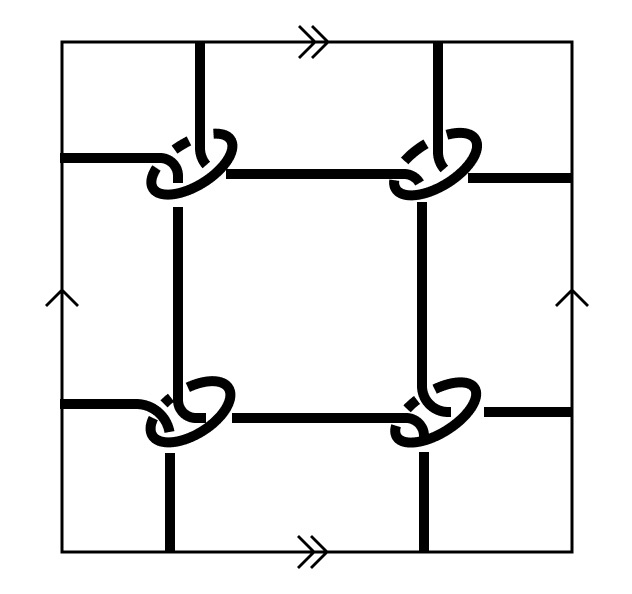}&
\includegraphics [height=3cm]{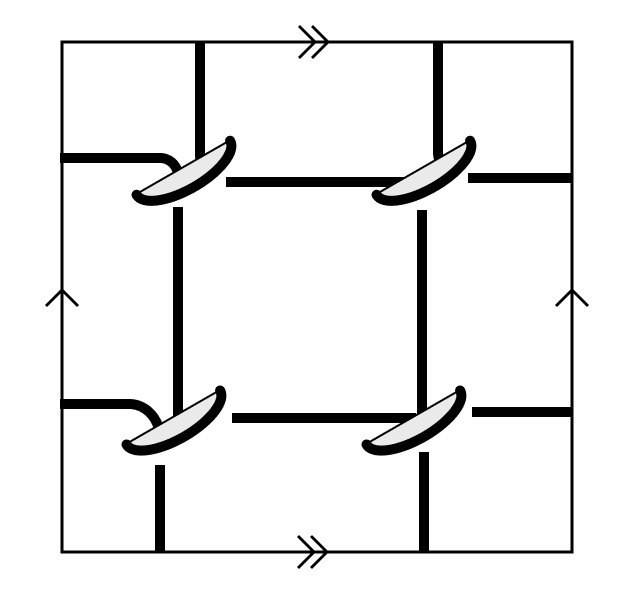}&
\includegraphics [height=3cm]{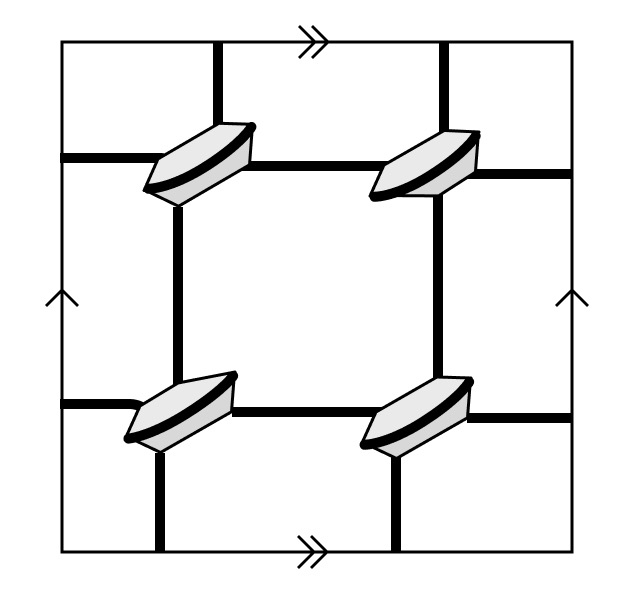}&
\includegraphics [height=3cm]{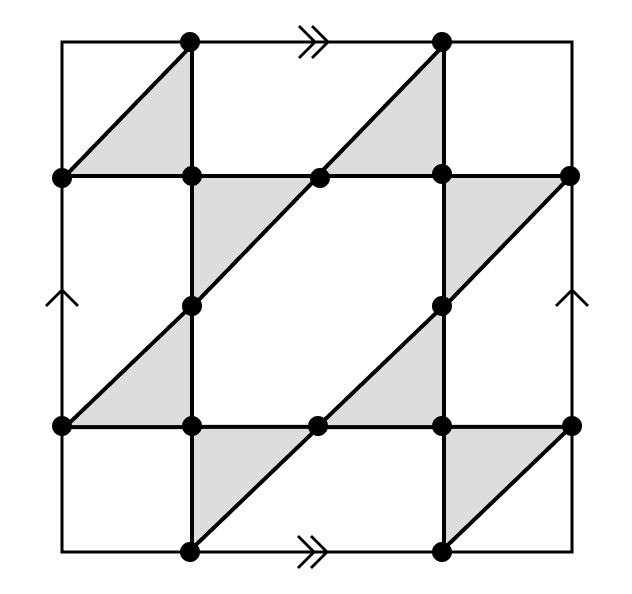}\\
(a)&(b)&(c)&(d)
\end{tabular}
\caption{
A brief review of the cut-slice-flatten method of
\cite{lackenby}, \cite{kwon2020}
for a FAL with no half-twists:
(a) A fundamental domain for a fully augmented square weave, $L$.
(b) Disks cut in half at each augmentation circle.
(c) Sliced and flattened half-disks at each augmentation circle
(d) Collapsing the bold strands in (c) to ideal points
gives the bow-tie graph $B_{L}$.
The half-disks become bow-ties, i.e. a shaded pair of triangular regions;
the white regions are the $F_R$'s corresponding to regions of
the link diagram.
}
\label{fig:falDecomp}
\end{figure}

\begin{figure}[ht]
\includegraphics[height=5cm]{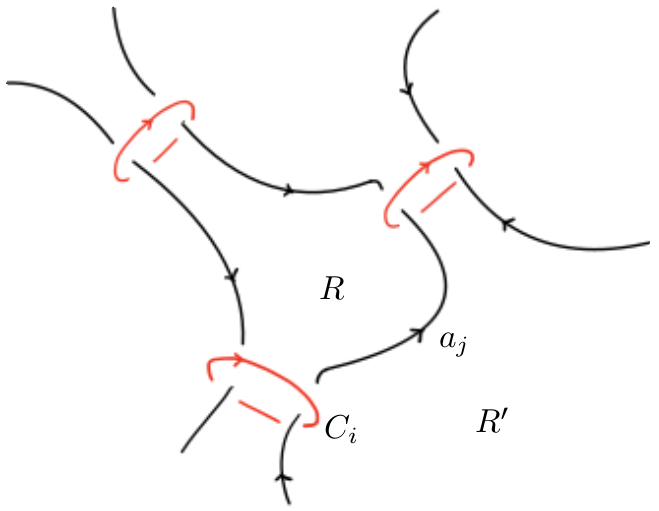}
\includegraphics[height=5cm]{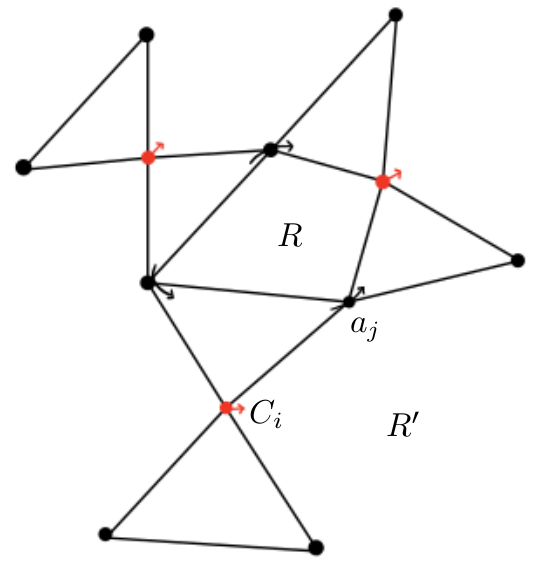}
\caption{Another example of a FAL diagram $D_L$
and corresponding bow-tie graph $B_L$;
the arrows on the vertices in $B_L$ indicate the orientations of
their corresponding components/segments of $L$;
for $C_i$, the arrow follows the direction of the top half of $C_i$.
}
\label{f:DL-BL}
\end{figure}

The cut-slice-flatten method of \cite{lackenby} for FALs in $S³$
produces a decomposition of the link complement into two ideal polyhedra:

\begin{definition}{\cite{lackenby}}
\label{d:decomp-S3}
Let $L$ be a FAL in $S³$; we first consider $L$ with no half-twists.
For an augmentation circle $C_i$,
denote by $F_{C_i}$ the \emph{spanning (twice-punctured) disk} bounded by $C_i$;
the projection plane cuts $F_{C_i}$ along three arcs, denoted $γ_i¹,γ_i⁰,γ_i²$,
into two half-disks which we denote by $F_{C_i}^+,F_{C_i}^-$
and refer to as the \emph{spanning} or \emph{bow-tie faces}.
See \figref{f:notation-spanning-face} for notation convention.

The arcs $γ_i^{•}$ and the link $L$ separate the projection plane into 2-cells,
one for each region $R$ of the link diagram of $L$
(here we only consider regions not created/bounded by augmentation circles;
these are in bijection with the non-bigon regions of $D(K)$).
We refer to these 2-cells as \emph{regional faces}, and denote them by $F_R$.

The ideal polyhedral decomposition of $S³ - L$ of \cite{lackenby}
consists of the arcs $γ_i^{•}$ as 1-cells,
the faces $F_{C_i}^+,F_{C_i}^-,F_R$ as 2-cells,
and two 3-cells which are the connected components
of $S³ - L$ minus the 1- and 2-cells.

For $L$ with half-twists, suppose $L'$ is the FAL that is obtained from $L$
by removing half-twists.
Then the polyhedral decomposition for $L$ is constructed by applying
the (non-continuous) transformation shown in \figref{f:half-twist-rotate}
to the polyhedral decomposition for $L'$.
(Further visualization aids are given in Figures \ref{f:FAL-S3-polyhedral}
and \ref{f:regional-face-half-twist}.)
\end{definition}

Kwon \cite{kwon2020} describes the analog of the cut-slice-flatten method
for FALs in $\TT$. Before we give the definition, we need the following:

\begin{definition}{\cite[Def. 2.5]{kwon2020}}
\label{d:torihedron}
A \emph{torihedron $\cT$} is a cone on the torus, i.e. $T² × [0,1]/(T² × \{1\})$,
together with a \emph{cellular} graph $G$ on $T² × \{0\}$,
i.e. every region of $G$ is a disk.
We refer to $G$ as the \emph{graph of $\cT$},
and refer to the edges (resp. faces) of $G$
as the edges (resp. faces) of the torihedron. 

An \emph{ideal torihedron} is a torihedron with the vertices of $G$ and 
the vertex $T² × \{1\}$ removed;
the removed vertices are referred to as its \emph{ideal vertices}.
\end{definition}

\begin{definition}
\label{d:torihedra-conical}
Let $\cT$ be a torihedron, with graph $G$.
The \emph{conical polyhedral decomposition} of $\cT$
is the polyhedral decomposition obtained by coning $G$ over the cone point,
i.e., it has a 3-cell for each face of $G$,
a 2-cell for each edge of $G$ (in addition to the faces of $G$),
and a 1-cell for each vertex of $G$ (in addition to the edges of $G$).
\end{definition}

\begin{definition}
\label{d:torihedra-conical-triangulation}
Let $\cT$ be a torihedron, with graph $G$.
Let $G'$ be a triangulation of $T² × \{0\}$ obtained from $G$
by adding only edges,
and let $\cT'$ be the torihedron with graph $G'$.
The \emph{conical triangulation of $\cT$ (based on $G'$)}
is the conical polyhedral decomposition of $\cT'$.

We may simply refer to \emph{a} conical triangulation of $\cT$
if we do not need to be specific about the choice of $G'$.
\end{definition}

\begin{definition}{\cite[Prop. 2.7]{kwon2020}}
\label{d:decomp-T2}
Let $L$ be a FAL in $\TT$; we first consider $L$ with no half-twists.
The definitions of arcs $γ_i^{•}$ and faces $F_{C_i}^{•}$, $F_R$
from \defref{d:decomp-S3} apply to $\TT - L$.
As before, we refer to $F_R$ as a \emph{regional face},
and $F_{C_i}^+,F_{C_i}^-$ as \emph{spanning} or \emph{bow-tie faces}
(see \defref{d:bow-tie-graph}).

The ideal torihedral decomposition of $\TT - L$ of \cite{kwon2020}
consists of the arcs $γ_i^{•}$ as 1-cells,
the faces $F_{C_i}^+,F_{C_i}^-,F_R$ as 2-cells,
and two ideal torihedra which are the connected components
of $\TT - L$ minus the 1- and 2-cells.

In each torihedron, the \emph{conical polyhedral decomposition}
is obtained by adding the following ``vertical'' 1- and 2-cells.
We add one 1-cell $γ_{C_i}$ for each augmentation circle $C_i$
and one 1-cell $γ_{a_j}$ for each segment $a_j$ of $L$ demarcated by spanning disks,
which connect $C_i$ and $a_j$ to the ideal vertex of the torihedron, respectively;
we refer to these 1-cells as \emph{vertical crossing arcs}.
For each pair $(C_i,a_j)$ of an augmentation circle $C_i$
and segment $a_j$ that are adjacent,
we add a 3-sided 2-cell $F_{C_i,a_j}$ characterized by having
boundary edges $γ_{C_i}, γ_{a_j}$, and $γ_i^k$ (where $k = 1$ or 2).
For segments $a_j,a_{j'}$ ending at a common $C_i$
(both on the same side of $C_i$),
we add a 3-sided 2-cell $F_{a_j,a_{j'};C_i}$ characterized by having
boundary edges $γ_{a_j}, γ_{a_{j'}}$, and $γ_i⁰$.
We refer to these 2-cells as \emph{vertical faces}.
See \figref{f:FAL-T2-polyhedral}.
\footnote{The notation $γ_{C_i},γ_{a_j},F_{C_i,a_j}$ is ambiguous
as it could refer to vertical crossing arcs of the top or bottom torihedron;
this should not be a problem as
we usually only work with the top torihedron,
with the same arguments applying to the bottom torihedron by symmetry.

In some cases, a segment $a_j$ may loop around and meet $C_i$ again,
so that $F_{C_i,a_j}$ is ambiguous even when restricted to one torihedron.
This ambiguity is overcome in the notation for the TT method for FALs
in Sections \ref{s:TT-FAL-S3}, \ref{s:TT-FAL-T2}.
}

For $L$ with half-twists, suppose $L'$ is the FAL that is obtained from $L$
by removing half-twists.
Then the torihedral decomposition for $L$ is constructed by applying
the (non-continuous) transformation shown in \figref{f:half-twist-rotate}
to the torihedral decomposition for $L'$.
(Further visualization aids are given in Figures \ref{f:FAL-T2-polyhedral}
and \ref{f:regional-face-half-twist}.)
\end{definition}

This torihedral decomposition of the complement of a FAL in $\TT$
is a modification of the torihedral decomposition of arbitrary links in $\TT$
by \cite{CKP}.
The graph on the boundary of the polyhedra/torihedra
is the bow-tie graph $B_L$ of $L$.

\begin{figure}[ht]
\centering
\begin{tabular}{ccc}
\includegraphics[width=6cm]{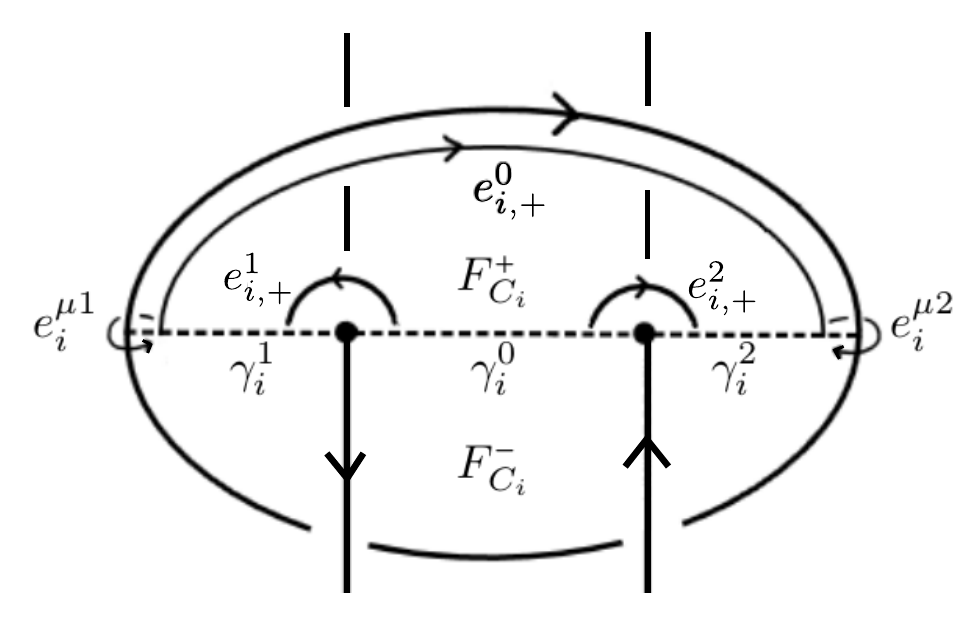}&
\includegraphics[width=5.5cm]{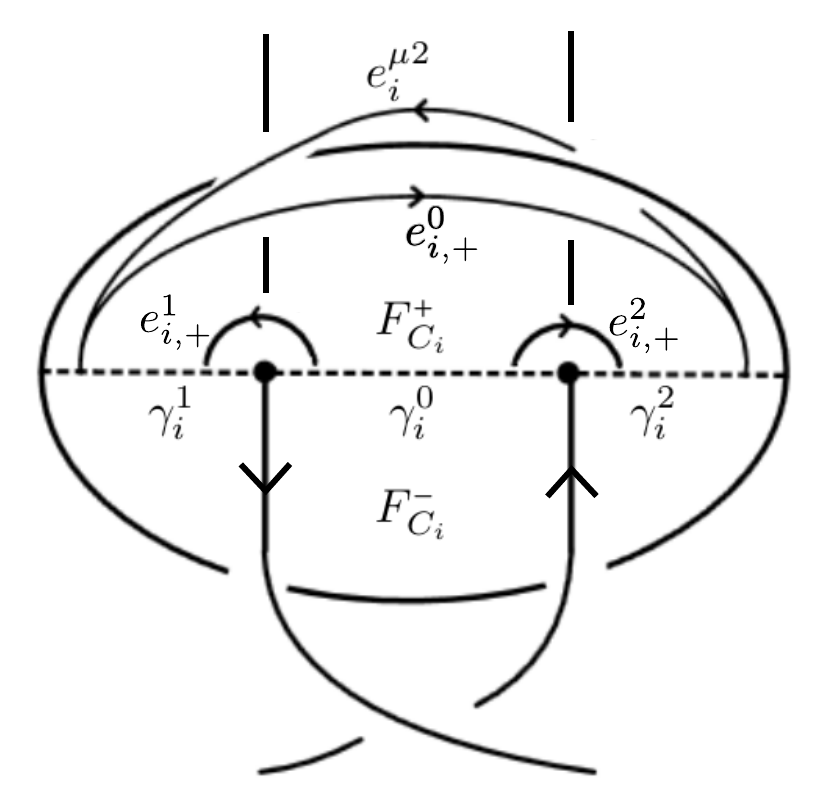}&
\includegraphics[width=3.5cm]{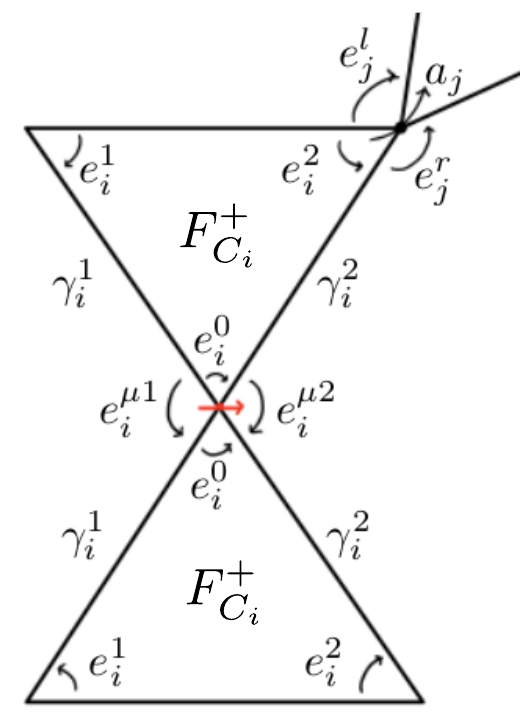}
\\
(a)&(b)&(c)
\end{tabular}
\caption{
Notation for crossing arcs and peripheral edges
near spanning face without (a) and with (b) half-twist.
By default, we orient $e_i⁰$ is oriented following $C_i$,
$e_{i,±}¹,e_{i,±}²$ are oriented by right-hand thumb rule around $L$,
and $e_i^{μ1},e_i^{μ2}$ are oriented by the right-hand thumb rule around $C_i$.
The order between $γ_i¹,γ_i²$ is chosen so that
$e_{i,+}⁰$ is oriented from $γ_i¹$ to $γ_i²$.
For the spanning disk region equations \eqnref{e:spanning-disk-eqn},
$χ_i¹ = +1$ and $χ_i² = -1$.
(c) Crossing arcs and peripheral edges depicted in $B_L$,
for $L$ with no half-twist.
}
\label{f:notation-spanning-face}
\end{figure}

\begin{figure}
\includegraphics[width=12cm]{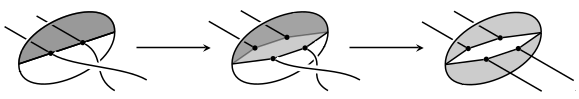}
\caption{
Diagrams modified from \cite[Fig. 7]{kwontham} depicting a
(non-continuous) operation on a FAL complement:
at an augmentation circle with a half-twist,
slice along the spanning twice-punctured disk,
rotate one side by $180^{∘}$ to undo the half-twist,
then glue the two sides back.
}
\label{f:half-twist-rotate}
\end{figure}

\begin{figure}
\centering
\includegraphics[height=3.5cm]{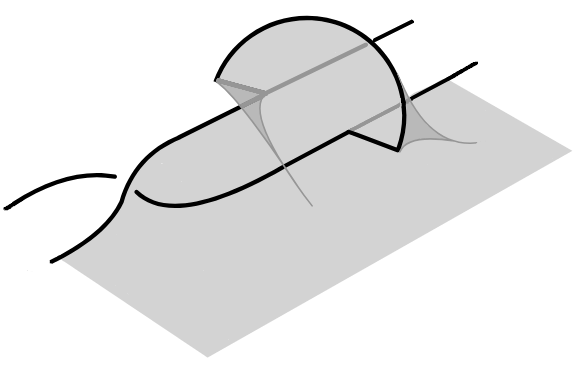}
\caption{
Regional face near a half-twist,
which is attached by following the edges in \figref{f:FAL-S3-polyhedral}(b),
starting at the bottommost gray edge and ending at the top-right.
}
\label{f:regional-face-half-twist}
\end{figure}

\begin{figure}
\centering
\begin{tabular}{cc}
\includegraphics[height=5cm]{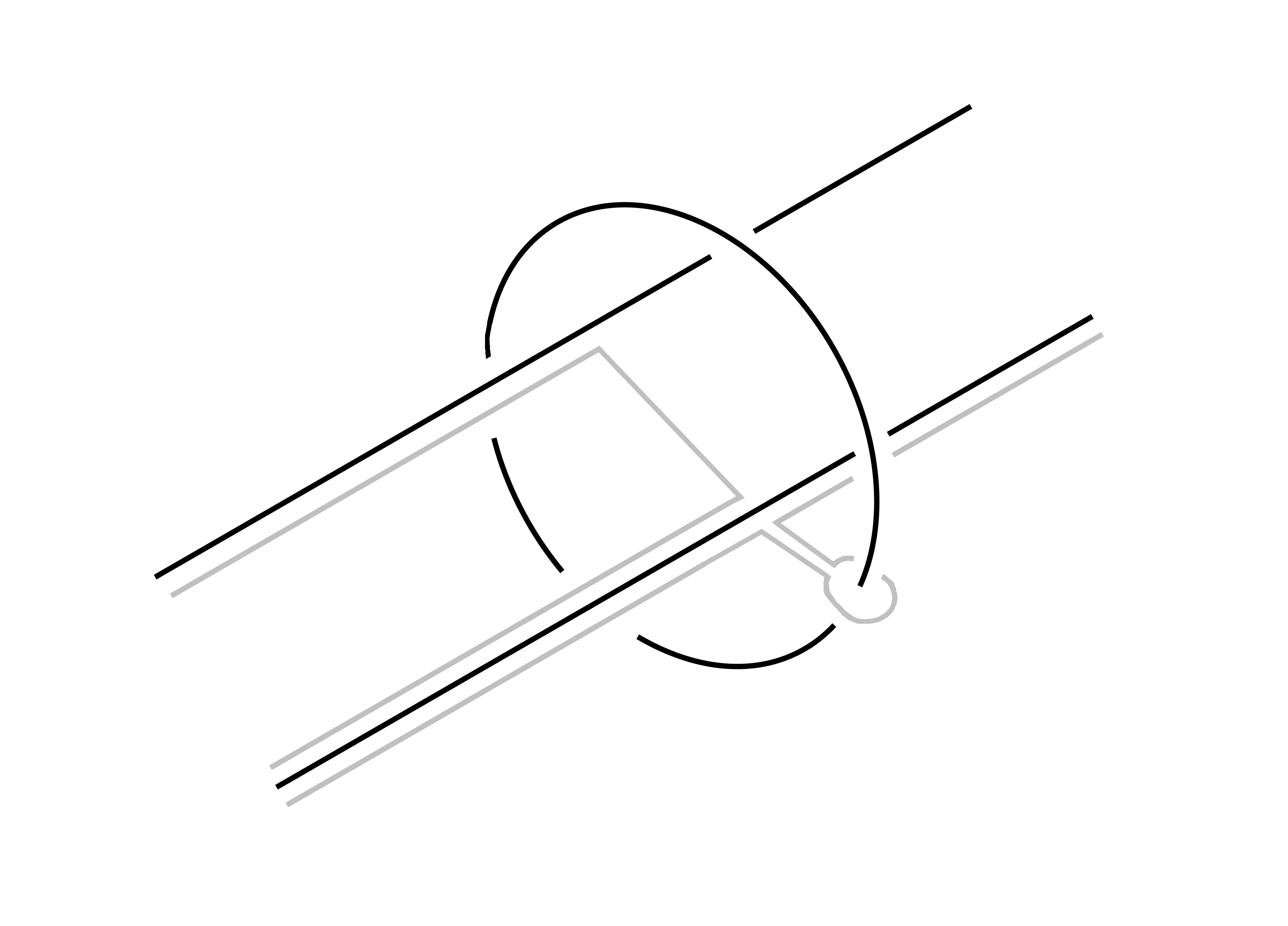}&
\includegraphics[height=5cm]{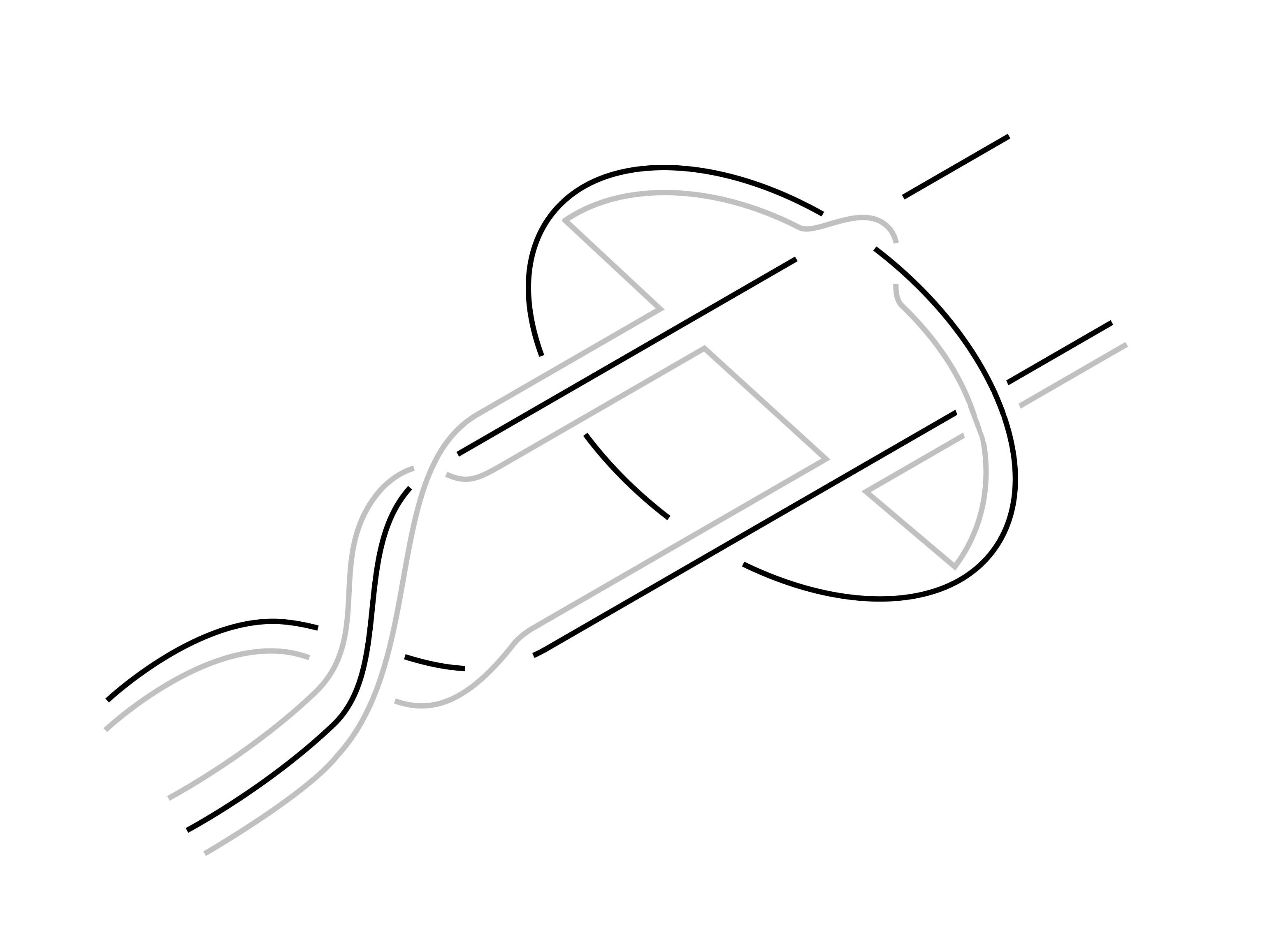}
\\
(a)&(b)
\end{tabular}
\caption{
Boundary edges of (truncated) polyhedral decomposition of $S³ - L$
near an augmentation circle $C_i$ without (a) and with (b) half-twist.
}
\label{f:FAL-S3-polyhedral}
\end{figure}

\begin{figure}
\centering
\begin{tabular}{cc}
\includegraphics[height=5cm]{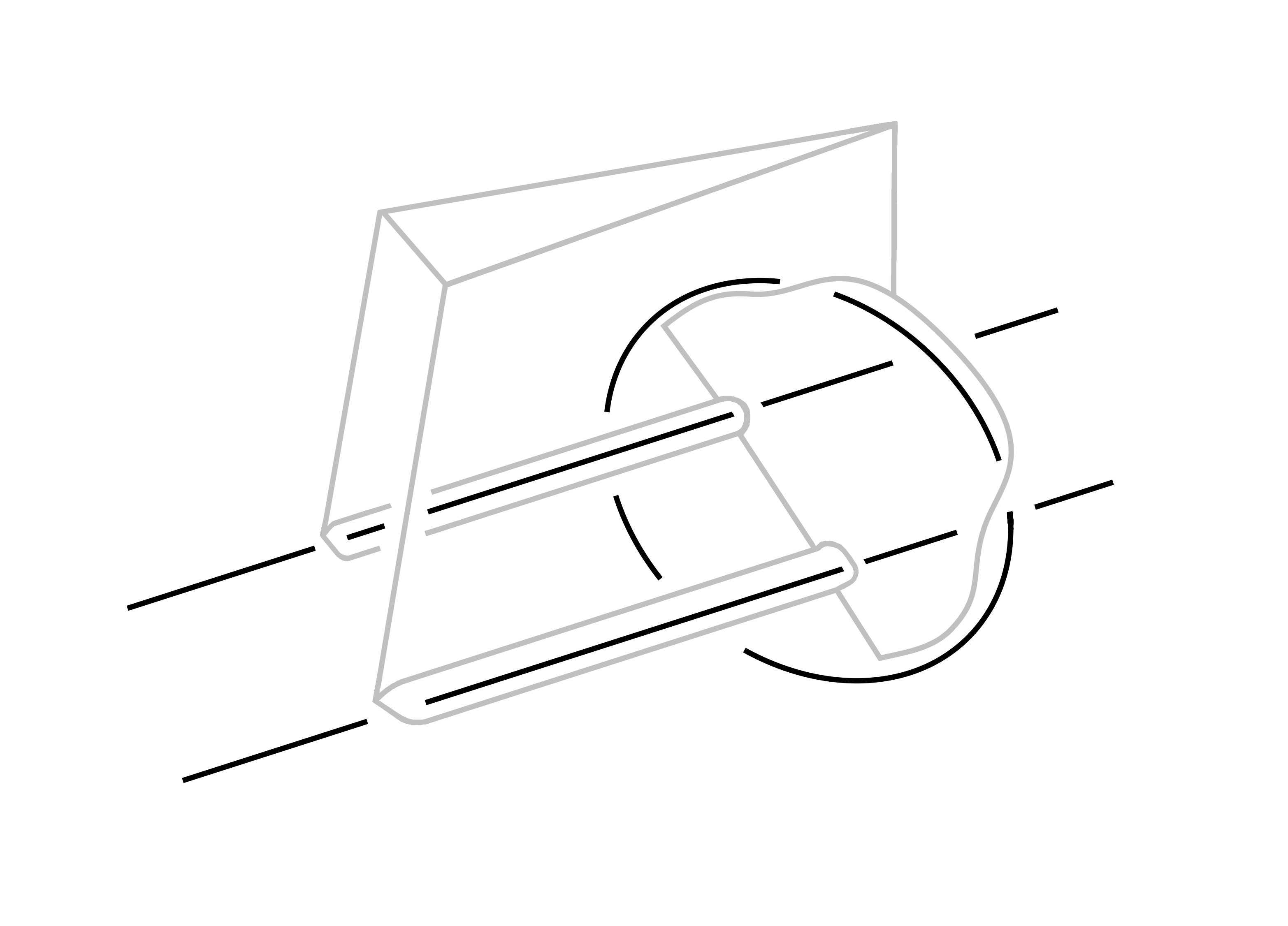}&
\includegraphics[height=5cm]{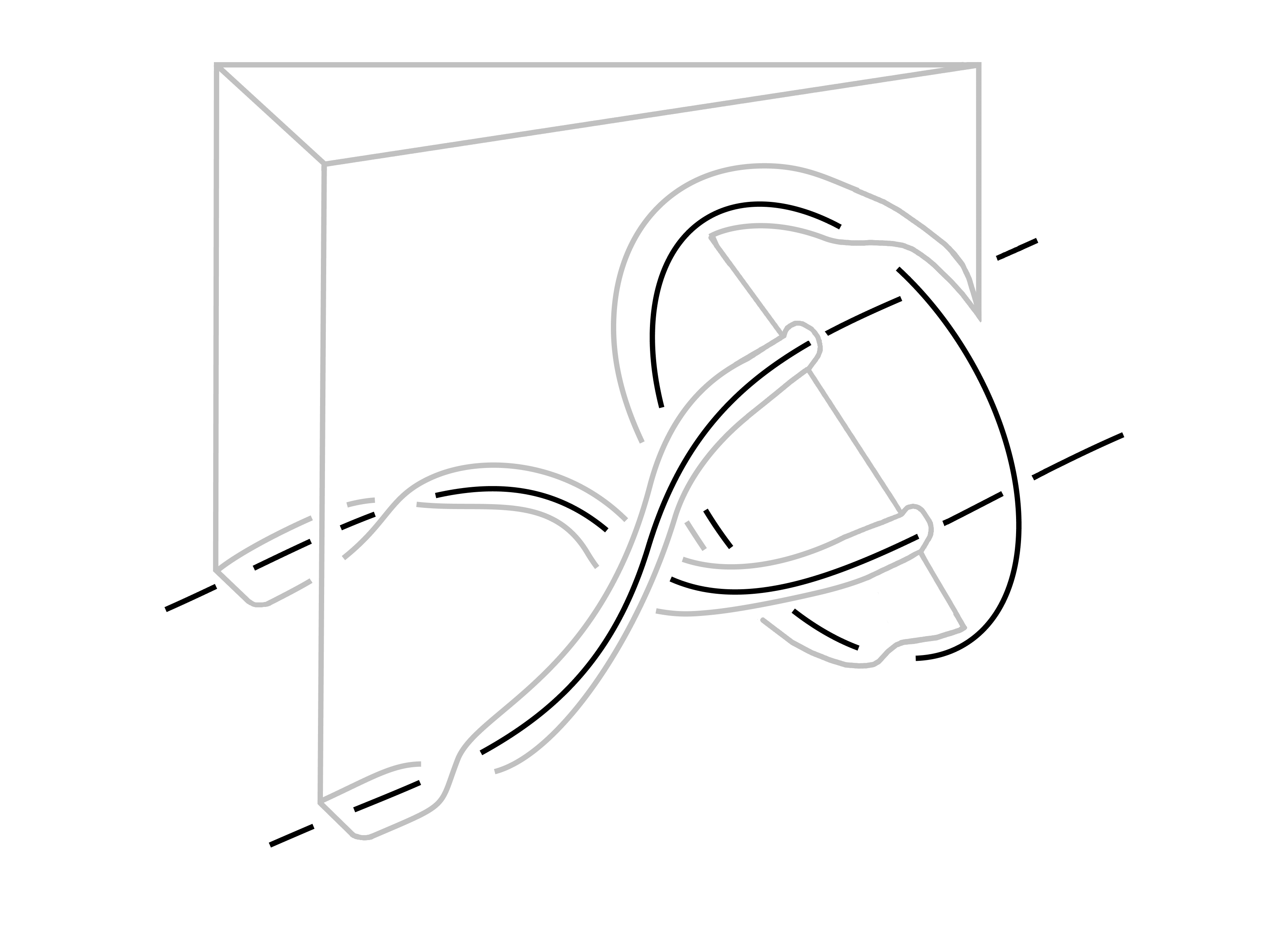}
\\
(a)&(b)
\end{tabular}
\caption{
Boundary edges of (truncated) canonical polyhedral decomposition of $\TT - L$,
which forms a (truncated) tetrahedron.
The edges at the top are peripheral edges around $\north$.
}
\label{f:FAL-T2-polyhedral}
\end{figure}

\begin{remark}
\label{r:FAL-triangulation-geometric}
For the polyhedral decomposition of FALs in $S³$,
one can also refine it to a triangulation
by triangulating each region of the bow-tie graph,
and then choosing a vertex in the bow-tie graph
and coning each 3-cell over this vertex.
This triangulation and the conical triangulations of
\defref{d:torihedra-conical-triangulation} are geometric.
\end{remark}


\begin{remark}
\label{r:FAL-half-twist-symmetry}
When a FAL $L$ has no half-twists,
its complement has an obvious symmetry that swaps the top and bottom
polyhedra/torihedra, which, in particular,
preserves the 1-cells $γ_i^{•}$ and 2-cells $F_R$ in the projection plane
and swaps the two 3-cells associated with each region.

Suppose $L$ has half-twists. Let $L''$ be the link
obtained from $L$ by applying this top-bottom symmetry
(i.e., flip the crossing at the half-twists).
Then the polyhedral/torihedral decomposition for $L''$
can be obtained from that of $L$ in two ways:
\begin{itemize}
\item by applying the top-bottom symmetry;
\item by performing a full-twist
	at each augmentation circle that harbors a half-twist.
\end{itemize}
Thus, the composition of the top-bottom symmetry and the full-twists
is a symmetry on the complement of $L$
that preserves the 1-cells $γ_i^{•}$ and 2-cells $F_R$,
and swaps the two 3-cells/torihedra;
furthermore, in the conical polyhedral decomposition of $\TT - L$,
this symmetry swaps the two 3-cells above and below each region $R$.
\end{remark}

\ \\
\subsection{Circle Packings and FALs}
\label{s:FAL-circ}
\ \\
We briefly recall circle packings and their relation to FALs,
first written in the appendix of \cite{lackenby} by Agol and D. Thurston.
In short, the complete hyperbolic geometry on the complement of a FAL
is determined by a circle packing, with one circle corresponding
to each region of the link.
We also refer to \cite{purcell} and \cite{kwon2020} for more details.

\begin{definition}
\label{d:circle-packing}
Let $Σ$ be a surface which is $\CC$ or $S²=\CC \cup \{\infty\}$,
and let $Γ$ be a simple graph (typically a triangulation) embedded in $Σ$.
A \emph{circle packing realizing $Γ$ in $Σ$} is a collection of circles in $Σ$,
$Ξ = \{X_i\}_{i ∈ V}$, indexed by the vertices $V$ of $Γ$,
such that $X_i,X_j$ are tangent if $i,j$ are adjacent vertices in $Γ$.
\end{definition}

(Note that our definition of a circle packing
slightly differs from the standard one,
in that the interiors of circles are considered as additional data;
see e.g. \cite{BandS} for more on circle packings/packings.)

\begin{definition}
\label{d:circle-packing-T2}
Let $Σ = T² = \CC/Λ$ be the torus given as a quotient of $\CC$
by a group $Λ \simeq \ZZ \oplus \ZZ$ of translations,
and let $Γ$ be a graph embedded in $Σ$.
A \emph{circle packing realizing $Γ$ in $Σ$}
is a circle packing realizing $\wdtld{Γ}$ in $\wdtld{Σ}\simeq \CC$
that is invariant under $Λ$,
where $\wdtld{Γ}$ is the preimage of $Γ$ in $\CC$.
\end{definition}

We allow ``accidental'' intersections or tangencies of circles,
that is, $i,j$ not adjacent in $Γ$
does not imply that $X_i,X_j$ are not tangent.
We also allow the circles to have interiors that intersect
(see \defref{d:univalence}).

Note that in the $Σ = T²$ case,
the existence of a circle packing on $Γ$ may impose a constraint on $Λ$.

In the following definitions,
$Σ$ may be one of the surfaces above,
and $Γ \subset Σ$ is an embedded simple graph as above.

\begin{definition}
\label{d:locally-order}
A circle packing is \emph{locally order-preserving}
if the cyclic order on points of tangencies on $X_v$
agrees with the clockwise or counterclockwise cyclic order
on the vertices adjacent to $v$ in $Γ$
(imposed by its embedding in $Σ$).
\end{definition}


\begin{definition}
\label{d:circle-packing-interior}
An \emph{interior filling} of a circle packing $Ξ$ is a choice,
for each circle $X_i ∈ Ξ$, of a connected component of $Σ \backslash X_i$;
the chosen component is called the \emph{interior (region)}.
\end{definition}

Equivalently, it is a choice of orientation of each $X_i$
(the corresponding interior filling begins defined by
the region to the left as the interior).
Of course, for $Σ = \CC, T²$, a circle packing has a unique interior filling.

\begin{definition}
\label{d:univalence} 
An interior filling is \emph{locally univalent}
if the interiors of $X_i,X_j$ are disjoint for
adjacent vertices $i,j$ of $Γ$;
if disjointness holds for \emph{all} pairs of vertices,
then we say the interior filling is \emph{(globally) univalent}.
(See \cite{Stephenson}.)
\end{definition}

Note that if a circle packing admits a (locally) univalent interior filling,
then such a locally univalent interior filling is unique,
in which case we say that the circle packing is (locally) univalent.

Univalence implies the resulting graph of tangencies
(vertices are the circles, edges are tangencies) is isotopic to $Γ$
(or the image of $Γ$ under a diffeomorphism of $Σ$).
In fact, in our use case, it is a local condition:

\begin{lemma}
\label{l:univalence-is-local}
Let $Ξ$ be a circle packing of a triangulation $Γ$ of $Σ = \CC$ or $S²$.
Then univalence is a local condition in the following sense:
if the circle packing is locally order-preserving and locally univalent,
then it is univalent.
\end{lemma}

Note that there is no finiteness condition on $Γ$,
so this applies to a biperiodic graph covering a graph in $T²$.

\begin{proof}
First, observe that if a vertex of $Γ$ has degree 2,
then since $Γ$ is a triangulation,
the only possibility is that $Σ = S²$,
and $Γ$ consists of three vertices, all connected to each other,
so there is nothing to prove.
Hence let us assume that the degree of every vertex of $Γ$
is at least 3.

Let $Γ_D$ be the dual graph to $Γ$,
which has a natural ``transverse to $Γ$'' embedding in $Σ$
(uniquely defined up to isotopy).
Let $R_v$ be the region of $Γ_D$ corresponding to vertex $v$ of $Γ$.
We show that there is a map from $S²$ to itself
that sends $X_v$ and its interior into $R_v$ for each vertex $v$,
which would immediately imply univalence.

Consider a triangular region of $Γ$ with vertices $v₁,v₂,v₃$.
The interiors of $X_{v₁},X_{v₂},X_{v₃}$ are pairwise disjoint.
Let the points of tangencies among the $X$'s be $q₁,q₂,q₃$,
with $q_{i+2}$ between $X_{v_i}$ and $X_{v_{i+1}}$.

\begin{figure}[ht]
\includegraphics[width=4cm]{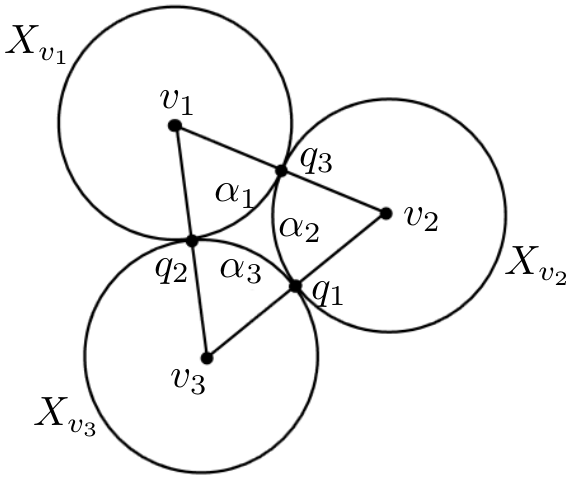}
\caption{``Triangular'' region between $X_{v₁},X_{v₂},X_{v₃}$}
\label{f:univalent-q-alpha}
\end{figure}

There are two arcs of $X_{v₁}$ between $q₂$ and $q₃$;
by locally order-preserving-ness, and $\deg v₁ \geq 3$,
exactly one of these arcs has no other points of tangencies.
Let $α₁$ be this arc.
Similarly define $α₂, α₃$.
These three arcs enclose a region
that is exactly one of the connected components of
the complement of the union of $X_{v₁},X_{v₂},X_{v₃}$ and their interiors.
Pick a point $q_{v₁ v₂ v₃}$ in this region,
and draw curves from each $q_i$ to $q_{v₁ v₂ v₃}$
within this enclosed region.

It is easy to see that these points $q_{v₁ v₂ v₃}$
and the curves we draw to the points of tangencies
make an embedded graph that is isomorphic to $Γ_D$;
the self-diffeomorphism of $Σ$ sending this graph to $Γ_D$
would send $X_v$ and its interior into $R_v$, hence we are done.
\end{proof}

\begin{definition}
\label{d:dual-ish-bow-tie-graph}
Let $L$ be a FAL in $S³$ (resp. $\TT$)
with bow-tie graph $B_L$ in $S²$ (resp. $T²$).
The \emph{region graph}, denoted by $Γ_L$, is the embedded graph
consisting of a vertex in each non-bow-tie region $R$ of $B_L$,
and an edge for each vertex of $B_L$
(which connects the two non-bow-tie regions adjacent to it).
The faces of $Γ_L$ correspond bijectively with bow-tie faces,
hence $Γ_L$ is a triangulation of $S²$ (resp. $T²$).
\end{definition}

A univalent circle packing on $Γ_L$ determines geometries on the
polyhedra in the (conical) polyhedral decomposition of the link complement,
which follows from the observation that the graph on $Σ$ ($= S²$ or $T²$)
with vertices given by the points of tangencies of the circle packing
and edges connecting neighboring points of tangencies on a circle
is precisely the bow-tie graph $B_L$.
In fact, this is the complete hyperbolic geometry on the link complement:

\begin{proposition}
\label{p:geometric-univalence} 
Let $L$ be a FAL in $S³$ (resp. $\TT$)
and let $Γ_L$ be its region graph in $S²$ (resp. $T²$).
A univalent circle packing on $Γ_L$ is unique up to conformal maps,
and determines the geometry of the polyhedra in the
polyhedral (resp. conical polyhedral) decomposition of the link complement
that makes it complete hyperbolic;
that is, the complete hyperbolic metric on the link complement
determines a realization of each polyhedron
as an ideal hyperbolic polyhedron,
whose ideal vertices are the points of tangencies of the circle packing
(resp. together with $∞$).
\end{proposition}
\begin{proof}
For uniqueness, see \cite{koebe} and \cite{BandS}.
For circle packing to geometry, see \cite{lackenby} and \cite{kwon2020}.
\end{proof}




\section{Generalization of the TT method}
\label{s:TT-3mfld}

We consider a generalization of the TT method to 3-manifolds with
toric ends and an ideal polyhedral decomposition
(or equivalently, by truncation, 3-manifolds with toric boundaries
with a polyhedral decomposition having only vertices on the boundary).
The original TT method, as we will see in \secref{s:original-TT},
implicitly uses the Menasco decomposition on the complement of a link in $S³$.
This allows us to systematically consider variations of the TT method
for other types of links, which we explore in \secref{s:application-links}.
The original TT method also uses the ``obvious'' choice of meridians
around each link component for normalization, but in general,
we have to make a choice $\{μ_i\}$ of meridians (i.e. simple closed curves)
for each toric end; in particular, for links in the thickened torus,
we need to make additional choices of meridians for the top and bottom ends.

\subsection{General case} 
\label{s:TT-general} 
\ \\ \indent
3-manifold $M$ refers to an oriented smooth 3-manifold with toric ends,
whose truncation $\ov{M} ⊆ M$ is compact.
We may also embed $M$ into $\ov{M}$ so that its closure is $\ov{M}$,
and view $\ov{M}$ as a completion of $M$ by adding boundary tori.

Let $τ$ be an ideal polyhedral decomposition of $M$;
its truncation, $\ov{τ}$, is a polyhedral decomposition of $\ov{M}$,
which is related to $τ$ as follows.
The 3-cells of $τ$ and $\ov{τ}$ are in bijection with each other.
The \emph{internal} 1- and 2-cells of $\ov{τ}$
consist of the intersections of the 1- and 2-cells of $τ$ with $\ov{M}$,
while the \emph{boundary} or \emph{peripheral} 2-, 1- and 0-cells of $\ov{τ}$
consist of the intersections of 3-, 2- and 1-cells of $τ$ with
the boundary tori of $\ov{M}$.
Note that all 0-cells (i.e. vertices) of $\ov{τ}$ are peripheral,
so we usually drop the adjective.
For a 1-cell $γ$ of $τ$, we refer to the endpoints of its corresponding
internal 1-cell $\ov{γ}$ of $\ov{τ}$ as its \emph{peripheral endpoints}.
Note that the boundary of every interal 2-cell of $\ov{τ}$
consists of 1-cells that alternate between internal and peripheral.

\begin{definition}
\label{d:labeling}
A \emph{labeling} $Ω = (w(-), u(-))$ of $\ov{τ}$ is an assignment of a
\emph{crossing label} $w(γ) ∈ \CC$ to each internal 1-cell $γ$
and an \emph{edge label} $u(\vec{e}) ∈ \CC$
to each oriented boundary 1-cell $\vec{e}$.
\end{definition}

We also say that $Ω$ is a labeling of an ideal polyhedral decomposition $τ$
when $Ω$ is a labeling of the corresponding truncation $\ov{τ}$.
We sometimes write $w_{γ}$ and $u_{\vec{e}}$
for $w(γ)$ and $u(\vec{e})$ respectively.

\begin{definition}
\label{d:alg-soln}
Let $M$ be a 3-manifold, with choices of meridians $\{μ_i\}$,
and let $τ$ be an ideal polyhedral decomposition of $M$.
A labeling $Ω = (w(-), u(-))$ of $τ$ is an \emph{algebraic solution}
if it satisfies:
\begin{itemize}
\item Normalization: For each boundary component $T_i$,
  if $\vec{e}₁,\ldots,\vec{e}_k$ is a sequence of 1-cells in $T_i$
	such that their concatenation is homotopic to $μ_i$,
	then $Σ u_{e_j} = 1$;
\item Region equations:
  For each oriented internal 2-cell $P$ of $\ov{τ}$, with boundary
  $∂P = \vec{γ}₁ \vec{e}₁ \cdots \vec{γ}_k \vec{e}_k$
	which alternates between internal 1-cells $\vec{γ}_i$ and
	peripheral 1-cells $\vec{e}_i$,
	\begin{equation}
		\label{e:region-eqn-matrix}
		\matu{-u_{\vec{e}_k}}
		\matw{w_{γ_k}}
		⋅⋅⋅
    \matu{-u_{\vec{e}₁}}
    \matw{w_{γ₁}}
		\sim
		\matu{0}
	\end{equation}
	where $\sim$ denotes equality in $\PSL$.
\end{itemize}
\end{definition}

Note that the 2-cell with the opposite orientation gives the same equation: from
$∂\cev{P} = \cev{γ}_k \cev{e}_{k-1} \cdots \cev{γ}₂ \cev{e}₁ \cev{γ}₁ \cev{e}_k$,
we would get the region equation
(using $u_{\cev{e}_i} = -u_{\vec{e}_i}$)
	\begin{equation}
    \matu{u_{\vec{e}_k}}
    \matw{w_{γ₁}}
    \matu{u_{\vec{e}₁}}
    \matw{w_{γ₂}}
		⋅⋅⋅
		\matu{u_{\vec{e}_{k-1}}}
		\matw{w_{γ_k}}
		\sim
		\matu{0}
	\end{equation}
which is actually the same as \eqnref{e:region-eqn-matrix}
(move the first matrix to the end, and then take the inverse of both sides),
so we may ignore the orientation of $P$ when we discuss its region equation.

\begin{remark}
\label{r:no-hyperbolic}
We note that no hyperbolic geometry is involved so far, and indeed,
one can speak of algebraic solutions on a non-hyperbolic 3-manifold.
However, algebraic solutions are closely related to $\PSL$-representations
of the fundamental group of the 3-manifold, which we discuss in
\secref{s:alg-to-geom-reconstruction}; in this view,
algebraic solutions have geometric meaning as they describe
some (possibly degenerate) local hyperbolic structure.
\end{remark}

\begin{remark}
\label{r:meridian-scaling}
It is useful to consider the TT method with different normalizations.
Fix $λ_i ∈ \CC^\times$ for each boundary $T_i$.
Suppose we change the normalization equation to $Σ u_{e_j} = λ_i$,
and proceed with the TT method as usual everywhere else.
If $Ω' = (w'(-),u'(-))$ is an algebraic solution to this modified TT method,
then we can construct an algebraic solution $Ω = (w(-),u(-))$ of the usual TT method,
where $u(\vec{e}) = u'(\vec{e})/λ_i$ for a peripheral edge on $T_i$,
and $w(γ) = w'(γ)/λ_iλ_j$ for an internal 1-cell $γ$ whose endpoints are on
$T_i$ and $T_j$ (possibly $i = j$).
Clearly this construction is invertible.

See also \rmkref{r:meridian-scaling-geom-label}
concerning rescaling and geometric meaning.
\end{remark}

\begin{remark}
\label{r:meridian-choice}
Changing the choice of meridians typically has the same effect as
using a different normalization as in \rmkref{r:meridian-scaling}, but not always.
Suppose we want to switch to a different choice meridians $\{μ_i'\}$.
For an algebraic solution $Ω$ for $M$ with the original choices $\{μ_i\}$,
if $u(μ_i') ≠ 0$ for all $i$,
then $Ω$ can be thought of as a solution for $M$ with $\{μ_i'\}$,
with normalizations $λ_i = u(μ_i')$,
or as in \rmkref{r:meridian-scaling}, we have an algebraic solution
$Ω' = (\{w'(γ) = w(γ)/λ_iλ_j, u'(\vec{e}) = u(\vec{e})/λ_i)$.
Thus we have a (partial) correspondence between algebraic solutions
for $M$ with $\{μ_i\}$ and algebraic solutions for $M$ with $\{μ_i'\}$.
Note that for the geometric labeling (Section \ref{s:geometric-labeling}),
which is ultimately the labeling we care about, we have $u(μ) ≠ 0$ for any meridian.
Note also that the $λ_i$ above depends on the algebraic solution $Ω$.
\end{remark}

The region equation is particularly simple for $k = 3$,
though its meaning will be clearer in terms of shape parameters
(see \rmkref{r:region-eqn-shape}):
\begin{lemma}
\label{l:region-eqn-k3}
For an internal 2-cell $P$ with $k = 3$ and
$∂P = \vec{γ}₁ \vec{e}₁ \vec{γ}₂ \vec{e}₂ \vec{γ}₃ \vec{e}₃$,
the region equation for $P$ is equivalent to
(writing $w_i := w_{γ_i}, u_i := u_{\vec{e}_i}$)
\begin{equation}
\label{e:region-eqn-k3}
w₁ = -u₁u₂
\; ; \;
w₂ = -u₂u₃
\; ; \;
w₃ = -u₃u₁
\end{equation}
\end{lemma}
\begin{proof}
Moving the last three terms on the left side of the region equation to the right,
we get
\[
\begin{pmatrix}
w₃ & 0 \\
-u₃ & w₂
\end{pmatrix}
\sim
\begin{pmatrix}
u₁ & w₁ + u₁u₂ \\
1 & u₂
\end{pmatrix}
\]
and comparing the top-right component gives $w₁ = -u₁u₂$.
We leave the rest of the details to the reader.
\end{proof}

The region equation is also quite manageable for $k = 4$
(again it will be clearer with shape parameters, see \rmkref{r:region-eqn-shape}):
\begin{lemma}
\label{l:region-eqn-k4}
For an internal 2-cell $P$ with $k = 4$ and
$∂P = \vec{γ}₁ \vec{e}₁ \vec{γ}₂ \vec{e}₂ \vec{γ}₃ \vec{e}₃ \vec{γ}₄ \vec{e}₄$,
the region equation for $P$ is equivalent to
(writing $w_i := w_{γ_i}, u_i := u_{\vec{e}_i}$)
\begin{align}
\label{e:region-eqn-k4}
\begin{split}
u₁w₂ + u₃w₁ = -u₁u₂u₃
\; ; \;
u₂w₃ + u₄w₂ = -u₂u₃u₄
\; &; \;
u₃w₄ + u₁w₃ = -u₃u₄u₁
\; ; \;
u₄w₁ + u₂w₄ = -u₄u₁u₂
\\
w₁w₃ &= (w₂ + u₂u₃)(w₄ + u₄u₁)
\end{split}
\end{align}
\end{lemma}
\begin{proof}
Expanding the product on the left side of the region equation, we get
\[
w₂w₄ + u₄u₁w₂ = u₂u₃w₄ + (w₁ + u₁u₂)(w₃ + u₃u₄)
\]
from the diagonal components being equal and non-zero, and
\[
- u₂w₄ - u₄w₁ - u₄u₁u₂ = 0
\;\; ; \;\;
- u₁w₃w₂ - u₃w₄w₂ - u₃u₄u₁w₂ = 0
\]
from the bottom-left and top-right components;
considering cyclic permutations, the second equation is implied by the first.
Thus we have the 4 equations in the first row of \eqnref{e:region-eqn-k4}.
Multiplying them by an appropriate $u_i$, we have
\[
u₄u₁w₂ + u₃u₄w₁ = -u₁u₂u₃u₄
\]
and its cyclic variants, and in particular,
\[
u₄u₁w₂ = u₂u₃w₄ \; ; \; u₁u₂w₃ = u₃u₄w₁
\]
so the equation from the diagonal components reduce to
the second row of \eqnref{e:region-eqn-k4}.
Note that its cyclic variant follows from \eqnref{e:region-eqn-k4}:
\begin{align*}
(w₁ + u₁u₂)(w₃ + u₃u₄)
&= w₁w₃ + u₃u₄w₁ + u₁(u₂w₃ + u₂u₃u₄) \\
&= (w₂ + u₂u₃)(w₄ + u₄u₁) + u₃u₄w₁ - u₁u₄w₂ \\
&= w₂w₄ + u₂u₃w₄ + u₁u₂u₃u₄ + u₃u₄w₁
= w₂w₄
\end{align*}
\end{proof}

When we specialize to link complements (see \secref{s:original-TT}),
the conditions above reduce to those of the original TT method:
the region equation \eqnref{e:region-eqn-matrix} 
becomes the region equation \eqnref{e:region-eqn-original},
while the normalization condition becomes the edge equation
\eqnref{e:edge-eqn-original}.

\subsubsection{\textbf{Geometric Labeling}}
\label{s:geometric-labeling}
\ \\ \indent

In this section, we construct the \emph{geometric labeling}
for a polyhedral decomposition $τ$ of a hyperbolic 3-manifold $M$,
which is also an algebraic solution.
This construction fails if $τ$ has an (internal) 1-cell that is
null-homotopic (by proper homotopy), and conversely,
if $τ$ does not have such 1-cells, then this construction works.
In \cite{TTmethod}, the latter case is guaranteed by the tautness condition.

As alluded to in \rmkref{r:no-hyperbolic},
algebraic solutions describe some hyperbolic geometry
in a piecemeal and local fashion; in order to keep track of
changing perspectives, we introduce the following:

\begin{definition}
\label{d:p-centered-view}
Let $\mathbf{p} = (p,v,H)$ be a triple consisting of a point $p ∈ H ⊆ \HH$
on a horosphere $H$ with a tangent vector $v ∈ T_p H ⊆ T_p\HH$ of length 1.
The \emph{$\mathbf{p}$-centering isometry $φ_{\mathbf{p}} ∈ \isomHH$}
is the unique isometry such that
$φ(p) = (0,1) ∈ \CC × \RR^+ \simeq \HH$,
$φ(H) = \{(w,z) \;|\; |w|²+(z-\frac{1}{2}\}$
is the horosphere centered at $0 ∈ \CC ⊆ ∂\HH$ containing $φ(p)$,
and $φ_*(v)$ projects to $1 ∈ \CC \simeq T_0\CC$
under the natural projection $\HH \simeq \CC \times \RR^+ \to \CC$
onto the first factor.
\end{definition}

Under most use cases, the choice of $H$ is clear from the context,
and often comes with additional data (a parametrization;
see \defref{d:param-horosphere}).
We employ the following convention:

\begin{convention}
\label{cv:centering-isometry}
When $p$ is implicitly understood to lie on a given parametrized horosphere $H$
(see \defref{d:param-horosphere}),
then we take $v ∈ T_pH$ to be the vector that is equal to 1
under the identification $T_pH \simeq T₀\CC \simeq \CC$
coming from the parametrization of $H$,
and $φ_p$, the $p$-centering isometry,
is taken to be the $(p,v,H)$-centering isometry.
\end{convention}


\begin{remark}
\label{r:centering-isometry}
The reason we use these $\mathbf{p}$-centering isometries is that
the matrix representing them are particularly simple for
the following two types of $\mathbf{p} = (p,v,H)$ that we will often encounter:
\begin{enumerate}
\item $p ∈ γ₀ := \{0\} \times \RR^+$ and
$H$ is centered at $0 ∈ \CC ⊆ ∂\HH$,
wherein $φ_{\mathbf{p}} =
    \begin{pmatrix}
    0 & w \\
    1 & 0
    \end{pmatrix}
$
for some $w ∈ \CC$, and
\item $p ∈ H₀ := \{z = 1\}$ and $H = H₀$,
wherein $φ_{\mathbf{p}} =
    \begin{pmatrix}
    1 & u \\
    0 & 1
    \end{pmatrix}
$
for some $u ∈ \CC$.
\end{enumerate}
\end{remark}

\begin{definition}
\label{d:param-horosphere}
A \emph{parametrization} of a horosphere $H ⊆ \HH$
is an orientation-preserving isometry $H \simeq \CC$.
\end{definition}

In practice, we only care about measuring the difference between two points
on a horosphere, so the parametrization only matters up to
a translation of $\CC$.
As such, we will consider two parametrizations to be equivalent
if they are related by a translation.

\begin{definition}
A \emph{parametrization} of a point $x ∈ ∂\HH$ at infinity
is a choice of a horosphere $H$ centered at $x$
with a parametrization.

Given a parametrized point $(x, H, ψ: H \simeq \CC)$,
an isometry $φ$ of $\HH$ sends it to the parametrized point
$φ_*(x, H, ψ: H \simeq \CC) := (φ(x), φ(H), ψφ^\inv: φ(H) \simeq \CC)$;
we say that these two parametrized points are related by $φ$.
\end{definition}

Note that there is an isometry relating any two given parametrized points,
and this isometry is unique up to postcomposing with a parabolic isometry
fixing the target parametrized point
(or equivalently, up to precomposing with a parabolic isometry
fixing the source parametrized point).

\begin{definition}
\label{d:geometric-crossing-label}
For a geodesic $γ$ in $\HH$ with parametrized endpoints on $∂\HH$,
the \emph{geometric crossing label $\geomw{γ}$ of $γ$} is defined as follows.
Let $(x,H,ψ:H \simeq \CC)$ and $(x',H',ψ':H' \simeq \CC)$
be the parametrized endpoints of $γ$.
Let $q = γ ∩ H$ and $q' = γ ∩ H'$.
Apply the $q$-centering isometry $φ_{q}$
(see \cvnref{cv:centering-isometry});
the image of $q'$ under this isometry, $φ_{q}(q')$, lies on the geodesic
$\{0\} \times \RR^+$,
and $φ_{q}(H)$ is centered at $0 ∈ \CC ⊆ ∂\HH$.
Then the $φ_{q}(q')$-centering isometry is given by
some off-diagonal matrix $\matwil{\geomw{γ}}$
for some $\geomw{γ} ∈ \CC$;
we define $\geomw{γ}$ to be the geometric crossing label of $γ$.
\end{definition}

Note that $\geomw{γ}$ is independent of the ordering of $q$ and $q'$,
hence we do not need to orient $γ$ to have a well-defined $\geomw{γ}$.
The definition of geometric crossing label is slightly different
from \cite[Sec. 3]{TTmethod}, but gives the same quantity up to a sign,
as can be inferred from the note after \cite[Fig. 4]{TTmethod}.
Geometric crossing label is also known as \emph{intercusp parameter}
in \cite{intercusp}.

\begin{definition}
For two geodesics $γ,γ'$ with a parametrized (common) endpoint on $∂\HH$,
the \emph{geometric edge label $\geomu{γγ'}$ associated to $(γ,γ')$}
is defined as follows.
Let $(x,H,ψ:H \simeq \CC)$ be the parametrized endpoint,
and let $q = γ ∩ H, q' = γ' ∩ H$.
Then $\geomu{γγ'}$ is defined to be the number $ψ(q') - ψ(q) ∈ \CC$.
\end{definition}

We emphasize that $\geomw{γ}$ and $\geomu{γγ'}$ depend on the parametrizations
of the endpoints of the geodesics.
Note that $\geomu{γ'γ} = - \geomu{γγ'}$.
We may think of $\geomu{γγ'}$ as the edge label of
the oriented segment $\ov{qq'}$, hence the name geometric \emph{edge} label.
We can also define geometric edge label in a similar way to the
geometric crossing label:
apply the $q$-centering isometry $φ_{q}$, so that
the images of $q$ and $q'$ lie in the horosphere $H₀ := \{z = 1\}$,
with parametrization given by projection to $\CC ⊆ ∂\HH$.
Then the $φ_{q}(q')$-centering isometry is precisely given by
the upper-triangular matrix $\matuil{-\geomu{γγ'}}$.

\begin{proposition}[Region equation for geometric labels in $\HH$]
\label{p:region-eqn-geometric}
Let $P$ be an ideal $k$-gon embedded in $\HH$
with geodesic sides $γ₁,...,γ_k$.
Let the vertices of $P$ be parametrized.
Then
\begin{equation}
\label{e:region-eqn-geometric}
	\matu{-\geomu{γ_k γ₁}}
	\matw{\geomw{γ_k}}
	⋅⋅⋅
  \matu{-\geomu{γ₁ γ₂}}
  \matw{\geomw{γ₁}}
	\sim
	\matu{0}
\end{equation}
\end{proposition}
\begin{proof}
Let $x_i ∈ ∂\HH$ be the vertex of $P$ between $γ_i$ and $γ_{i+1}$.
Let $γ_i$ and $γ_{i+1}$ intersect the horosphere $H_i$
(from the parametrization of $x_i$) at $p_i$ and $q_i$ respectively,
and let $\vec{ε}_i$ be the geodesic on $H_i$ going from $p_i$ to $q_i$.
Apply the $q_k$-centering isometry $φ_{q_k}$ to $\HH$,
and let $p_i^{(1)} := φ_{q_k}(p_i)$ and $q_i^{(1)} := φ_{q_k}(q_i)$
be their new positions.
Then the $p₁^{(1)}$-centering isometry $φ_{p₁^{(1)}}$
is precisely $\matwil{\geomw{γ₁'}}$.
Let $p_i^{(2)} := φ_{p₁^{(1)}}(p_i^{(1)})$ and
$q_i^{(2)} := φ_{p₁^{(1)}}(q_i^{(1)})$.
Then the $q₁^{(2)}$-centering isometry $φ_{q₁^{(2)}}$
is precisely $\matuil{-\hspace{-2pt}\geomu{γ₁' γ₂'}}$.
We continue in this fashion, applying the centering isometry for
$p₂^{(3)},q₂^{(4)},...,q_{k-1}^{(2k-2)},p_k^{(2k-1)},q_k^{(2k)}$,
where $p_i^{(j)}$ is the image of $p_i^{(j-1)}$ after applying the previous
centering isometry, and likewise for $q_i^{(j)}$.
The composition of these isometries
(excluding the initial $q_k$-centering isometry)
is then given by the left-hand side of the region equation above,
and it remains to observe that this composition is also
the $q_k^{(1)}$-centering isometry, which is trivial.
\end{proof}

The region equations \eqnref{e:region-eqn-geometric}
(or \eqnref{e:region-eqn-matrix})
were originally given in terms of \emph{shape parameters} in
\cite[Eqn. (1)]{TTmethod},
which are independent of parametrizations.
We recall the definition of shape parameters below,
but not the corresponding region equations
(however see \rmkref{r:region-eqn-shape}).
One potential drawback of using shape parameters is that they may become
undefined in very degenerate situations
(e.g. they are undefined for the region $R'$ in $D'$
in \rmkref{r:non-taut}).

\begin{definition}
\label{d:shape-parameter}
Let $γ,γ',γ''$ be geodesics in $\HH$,
where $γ$ and $γ'$ share a common endpoint $z₁$,
and $γ$ and $γ''$ share a common endpoint $z₂ ≠ z₁$.
Let $z₀,z₃$ be the other endpoint of $γ',γ''$ respectively.
Then the \emph{shape parameter of $(γ,γ',γ'')$}
is the cross ratio
\[
ζ_{γ;γ',γ''} := \frac{(z₀ - z₁)(z₂ - z₃)}{(z₀ - z₂)(z₁ - z₃)}
\]
\end{definition}

We usually simply say ``shape parameter of $γ$'', and write $ζ_γ$.
Note that $ζ$ is invariant with respect to swapping $γ'$ and $γ''$.
The geometric labels are related to shape parameters:

\begin{lemma}{\cite[Prop. 4.1]{TTmethod}}
\label{l:shape-param-labels}
Let $γ,γ',γ''$ be geodesics in $\HH$,
with $γ$ sharing endpoints with $γ'$ and $γ''$ as in \defref{d:shape-parameter}.
Suppose the endpoints of $γ$ are parametrized,
and suppose $\geomu{γγ'} ≠ 0 ≠ \geomu{γγ''}$.
Then
\[
ζ_{γ;γ',γ''} = - \frac{\geomw{γ}}{\geomu{γγ'}\geomu{γγ''}}
\]
\end{lemma}

\begin{remark}
\label{r:region-eqn-shape}
When geometric edge labels are non-zero,
we can rewrite the region equations in terms of shape parameters,
as in \cite[Prop. 4.2]{TTmethod}.
The cases of $k = 3$ and $k = 4$ sides are particularly simply and intuitive.
Write $ζ_i = ζ_{γ_i;γ_{i-1},γ_{i+1}}$.
For $k = 3$, \lemref{l:region-eqn-k3} implies that the region equation
is equivalent to $ζ_i = -1$ for $i = 1,2,3$,
while for $k = 4$, \lemref{l:region-eqn-k4} implies that the region equation
is equivalent to $ζ_i + ζ_{i+1} = -1$ for $i = 1,2,3,4$
(the second row in \eqnref{e:region-eqn-k4} is redundant).
These recover the consequences of \cite[Eqn. (2)]{TTmethod}.
However, \lemref{l:region-eqn-k4} is slightly more general,
as it allows for solutions with $u_i \equiv 0$.
\end{remark}

Now we give the corresponding definitions in the setting of
a cusped hyperbolic 3-manifold.

\begin{definition}
\label{d:param-torus}
For a flat torus $T ⊆ M$ embedded as a torus slice in a cusp
in a cusped hyperbolic 3-manifold,
a \emph{parametrization} of $T$ is a local isometry $\CC \to T$.
\end{definition}

We also consider two parametrizations to be equivalent
if they are related by a translation.
Thus, if $\HH \to M$ is a local isometry (hence a covering map),
and the horosphere $\wdtld{T}$ is a
connected component of the preimage of $T$,
then a parametrization of $\wdtld{T}$ descends to a parametrization of $T$,
and this gives a bijective correspondence between
parametrizations on $\wdtld{T}$ and $T$.

\begin{definition}
\label{d:param-cusp}
A \emph{parametrization} of an end of a cusped hyperbolic 3-manifold
is a choice of parametrized flat torus (as in \defref{d:param-torus}).
Given an oriented simple loop $μ$ on the boundary torus,
the \emph{canonical parametrization with respect to $μ$}
is the flat torus $T$ on which $\hat{μ}$, the geodesic homotopic to $μ$,
has length 1, with the parametrization of $T$ that lifts $\hat{μ}$ to 1.
\end{definition}

As before, this is related to the $\HH$ setting:
given a local isometry $\HH \to M$, and a point $x ∈ ∂\HH$
that descends to an ideal vertex (corresponding to an end) of $M$,
a parametrization of the ideal vertex/end lifts to a parametrization of $x$,
and this is a bijective correspondence between sets of parametrizations.

\begin{definition}
\label{d:geodesic-like}
Given a cusped hyperbolic 3-manifold $M$,
a \emph{geodesic-like curve $γ$} is a properly embedded curve
traveling between ends of $M$,
such that $γ$ is homotopic (rel. boundary) to a geodesic;
equivalantly, $γ$ is not homotopic to a curve contained in a cusp.
\end{definition}

\begin{definition}
Given a hyperbolic 3-manifold $M$ and two parametrized ends
(possibly the same ends with different parametrizations),
the \emph{geometric crossing label $\geomw{γ}$} of a geodesic-like curve $γ$
connecting the two ends
is defined as the geometric crossing label $\geomw{\wdtld{γ}}$ of $\wdtld{γ}$,
a geodesic in $\HH$ homotopic (rel. boundary) to the lift of $γ$
along the covering map $\HH \to M$.
\end{definition}


\begin{definition}
Given a hyperbolic 3-manifold $M$ with a parametrized ends
and two geodesic-like curves $γ,γ'$,
together with a \emph{peripheral edge $\vec{ε}$ from $γ$ to $γ'$}, that is,
an oriented curve $\vec{ε}$ connecting $γ$ to $γ'$ contained in some cusp,
the \emph{geometric edge label $\geomu{\vec{ε},γ,γ'}$ associated to
$(\vec{ε}, γ, γ')$}
is defined as the geometric edge label $\geomu{\wdtld{γ}\wdtld{γ'}}$,
where $\wdtld{γ},\wdtld{γ'}$ are geodesics in $\HH$,
such that there exists a curve $\wdtld{\vec{ε}}$ in $\HH$
that connects $\wdtld{γ}$ to $\wdtld{γ'}$,
and $\wdtld{γ}$, $\wdtld{γ'}$, and $\wdtld{\vec{ε}}$
are homotopic to lifts of $γ$, $γ'$ and $\vec{ε}$
(and throughout the homotopy, $\wdtld{\vec{ε}}$ always connects
$\wdtld{γ}$ to $\wdtld{γ'}$).
\end{definition}

\begin{definition}
\label{b:geometric-labels}
Let $τ$ be an ideal polyhedral decomposition of a cusped hyperbolic 3-manifold $M$,
with choice of meridian $μ_i$ for each end,
which determines the canonical parametrization with respect to $μ_i$ for each end.
Suppose every edge of $τ$ is geodesic-like.
Then the \emph{geometric labeling of $\ov{τ}$}, denoted by
$\geomlabel = (\geomw{}(-), \geomu{}(-))$,
is the labeling that assigns the geometric crossing label
$\geomw{}(\ov{γ}) := \geomw{γ}$ to the internal 1-cell $\ov{γ}$ of $\ov{τ}$
corresponding to the edge $γ$ of $τ$,
and assigns the geometric edge label
$\geomu{}(\vec{e}) := \geomu{\vec{e},γ,γ'}$
to the oriented boundary 1-cell $\vec{e}$ that goes from
$γ$ to $γ'$, where $γ,γ'$ are edges of $τ$.
\end{definition}

\begin{proposition}
\label{p:geom-sol-exist}
Let $τ$ be an ideal polyhedral decomposition of a cusped hyperbolic 3-manifold $M$,
such that every edge of $τ$ is geodesic-like.
For any choice of meridians on each end,
the resulting geometric labeling $\geomlabel = (\geomw{}(-), \geomu{}(-))$
of $\ov{τ}$ is an algebraic solution (as in \defref{d:alg-soln}).
\end{proposition}

\begin{proof}
The normalization condition follows almost by definition of geometric edge labels.
For the region equations, consider a 2-cell $P$ of $τ$,
with boundary $∂P = \vec{γ}₁ \cdots \vec{γ}_k$,
and corresponding internal 2-cell $\ov{P}$ of $\ov{τ}$,
with boundary $∂\ov{P} = \ov{γ}₁ \vec{e}₁ \cdots \ov{γ}_k \vec{e}_k$
alternating between internal 1-cells $\ov{γ}_i$ and
peripheral 1-cells $\vec{e}_i$.
Let $\wdtld{P}$ be a lift of $P$ to $\HH$,
with boundary $\wdtld{\vec{γ}}₁ \cdots \wdtld{\vec{γ}}_k$.
Homotope $\wdtld{P}$ to an ideal polygon $P'$,
so that the boundary edges $\wdtld{\vec{γ}}_i$
become geodesics $γ_i' ⊆ ∂P'$.
Then it is clear that $\geomw{}(\ov{γ}_i) = \geomw{γ_i'}$
and $\geomu{}(\vec{e}_i) = \geomu{γ_i' γ_{i+1}'}$,
so by the region equation for geometric labels in $\HH$
(i.e. \prpref{p:region-eqn-geometric}), we are done.
\end{proof}


\begin{remark}
\label{r:lin-indep-crit}
For a loop $α$ on a peripheral torus $T_i$,
if $α$ is not homotopic to a multiple of the chosen meridian $μ_i$,
then $\geomu{}(α)$ must be $\RR$-linearly independent from
$\geomu{}(μ_i) = 1$, i.e. $\geomu{}(α) ∈ \RR$.
This can be used as a simple necessary criterion to rule out that
a given algebraic solution $Ω$ is the geometric labeling.
\end{remark}

\begin{remark}
\label{r:taut-shape-param}
In \cite{TTmethod}, the tautness condition (see \defref{d:taut}) is used
in the proof of \cite[Prop 2.1]{TTmethod} to ensure that the ideal vertices of a region
are pairwise distinct;
as we see above, it is not necessary to have all pairs of vertices be distinct,
but rather only pairs of adjacent vertices (i.e. those connected by crossing arcs),
for the geometric labeling to be definable.
Indeed, we give an example in \rmkref{r:non-taut} of a link diagram that is not taut
but admits a geometric labeling that corresponds to the geometric structure,
which shows that tautness is sufficient but not necessary.

Tautness may be required in \cite{TTmethod}
because they write their region equation \cite[Eqn. (1)]{TTmethod} 
in terms of shape parameters,
which requires certain regularities (i.e. more pairs of ideal vertices being distinct),
so it is convenient to simply give a strong condition (i.e. tautness)
that guarantees pairwise distinctness
(see also \cite[Thm. 3.1, Prop. 4.1]{TTmethod}).
However, as we had noted before, they also give the region equation as we did
(in terms of crossing and edge labels only),
where edge labels are not required to be nonzero,
so it would have been a simple matter, had they wanted to,
for the authors of \cite{TTmethod} to consider relaxing the tautness condition as well.
\end{remark}


\begin{proposition}
\label{p:labels-isom}
Let $M$ and $M'$ be hyperbolic 3-manifolds with parametrized ends,
and let $f: M \to M'$ be an isometry.
For the flat torus $T_i$ parametrizing the end $i$ of $M$,
let $T_i'$ be the flat torus parametrizing the end $f(i)$ of $M'$;
the collar neighborhood around $f(i)$
gives a natural identification $T_i' \simeq f(T_i)$
that is a similarity transformation.
Suppose, under their parametrizations and the natural identification,
$f|_{T_i}: T_i \to T_i'$ sends 1 to $λ_i ∈ \CC$;
note that since $f$ and the parametrizations are local isometries,
$|λ_i| = 1$.

Let $γ,γ'$ be geodesic-like curves in $M$ with common endpoint at end $i$,
and the other endpoint of $γ$ be at end $j$.
Let $\vec{ε}$ be an oriented curve connecting $γ$ to $γ'$
contained in some collar neighborhood of $i$.
If $f$ is orientation-preserving, then
\[
\geomu{f(\vec{ε}),f(γ),f(γ')} = λ_i \geomu{\vec{ε},γ,γ'}
\;\; ; \;\;
\geomw{f(γ)} = λ_i λ_j \geomw{γ}
\]
and if $f$ is orientation-reversing, then
\[
\geomu{f(\vec{ε}),f(γ),f(γ')} = λ_i \ov{\geomu{\vec{ε},γ,γ'}}
\;\; ; \;\;
\geomw{f(γ)} = λ_i λ_j \ov{\geomw{γ}} \ .
\]
\end{proposition}

In particular, if we take $M'$ to be $M$ with the opposite orientation,
take $f = \id_M$, and the parametrization of $M'$ that is
the complex conjugate of the parametrization of $M$,
then the geometric labels in $M'$ are just the complex conjugate of
the geomtric labels in $M$.

\begin{proof}
The geometric edge label $\geomu{\vec{ε}}$ ``measures the ratio'' between
$\vec{ε}$ and 1.
Hence, in the orientation-preserving case,
$\geomu{\vec{ε}}$ also ``measures the ratio'' between $f(\vec{ε})$ and $λ_i$;
in the orientation-reversing case, we have to complex conjugate it.
Since $\geomu{f(\vec{ε})}$ ''measures the ratio'' between $f(\vec{ε})$ and 1,
we just have to scale by $λ_i$.
For the geometric crossing labels,
one can prove it first principles, but a more intuitive approach
is through shape parameters (see \defref{d:shape-parameter}):
since $f$ is an isometry,
$ζ_{f(γ);f(γ'),f(γ'')} = ζ_{γ;γ',γ''}$ (orientation-preserving)
or $ζ_{f(γ);f(γ'),f(γ'')} = \ov{ζ_{γ;γ',γ''}}$ (orientation-reversing).
Then using the relation between shape parameters and labels
(see \lemref{l:shape-param-labels}),
the desired result follows.
\end{proof}

\begin{remark}
\label{r:meridian-scaling-geom-label}
Modified normalization equation of the TT method (as in \rmkref{r:meridian-scaling})
has a clear geometric meaning for the geometric labeling:
in \defref{b:geometric-labels}, instead of the canonical parametrization at $T_i$,
we choose the flat torus such that the meridian $μ_i$
has length $|λ_i|$ and the parametrization identifies it with $λ_i$.
See also \rmkref{r:meridian-scaling-geom-realz}
for another perspective on the geometric meaning of rescaling.
\end{remark}


\subsection{Extending an Algebraic Solution to Arbitrary Arcs and Peripheral Edges}
\label{s:extension-adm}
\ \\ \indent

Given a parametrized cusped hyperbolic 3-manifold $M$,
and an ideal polyhedral decomposition $τ$,
the geometric crossing label is defined for any geodesic-like arc,
not just internal 1-cells of the (truncated) polyhedral decomposition;
likewise, the geometric edge label is defined for any peripheral edge
between geodesic-like arcs, not just for peripheral 1-cells.
In the following, we describe a procedure for extending an algebraic solution
$Ω = (w(-),u(-))$ to other geodesic-like arcs and peripheral edges,
that is, to assign crossing and edge labels to them;
when such labels exist, they are well-defined and do not depend on
the choices made during the construction below.
These labels are in fact geometric labels under the ``geometric realization''
as we will consider in the next section;
see \eqnref{e:phi-xtilde}, \rmkref{r:geom-realzn-adm}.

Let $γ$ be a geodesic-like arc.
There exists 1-cells $γ₁,...,γ_k$ of $τ$,
with peripheral edge $\vec{ε}₀$ from $γ$ to $γ₁$,
peripheral edge $\vec{ε}_k$ from $γ_k$ to $γ$,
and peripheral 1-cell $\vec{ε}_i$ of $\ov{τ}$ from $γ_i$ to $γ_{i+1}$
for $i = 1,...,k-1$,
such that $\vec{ε}_0 \ov{γ₁} \vec{ε}₁ \cdots \ov{γ_k} \vec{ε}_k \ov{γ}$
bounds a truncated ideal polygon (not necessarily embedded).

We say that $γ$ is \emph{$Ω$-admissible} if there exist 
$u(\vec{ε}_0), u(\vec{ε}_k), w(γ)$ that satisfies:
\begin{equation}
\label{e:arb-crossing-label-1}
\matu{-u(\vec{ε}₀)} \matw{(w(γ_k)}
⋅⋅⋅
\matu{-u(\vec{ε}₁)} \matw{w(γ₁)}
\matu{-u(\vec{ε}₀)} \matw{w}
\sim
\begin{pmatrix}
1 & 0 \\
0 & 1
\end{pmatrix}
\end{equation}

Rearranging, we have
\begin{equation}
\label{e:arb-crossing-label-2}
\matw{w(γ_k)}
⋅⋅⋅
\matu{-u(ε₁)}
\matw{w(γ₁)}
\sim
\begin{pmatrix}
u(\vec{ε}_k) & u(\vec{ε}₀) u(\vec{ε}_k) + w(γ)
\\
1 & u(\vec{ε}₀)
\end{pmatrix}
\end{equation}
and if the bottom left entry of the left-hand side is non-zero,
then we can rescale it to 1 and easily obtain
$u(\vec{ε}_0), u(\vec{ε}_k), w(γ)$,
and otherwise, $γ$ is not $Ω$-admissible.
Note that all internal 1-cells of $τ$ are $Ω$-admissible.
We also note that $γ$ may be $Ω$-admissible for $Ω$,
but not $Ω'$-admissible for some other algebraic solution $Ω'$;
however, all geodesic-like arcs are $\geomlabel$-admissible.
See \rmkref{r:geom-realzn-adm} for a geometric interpretation of $Ω$-admissibility.

Next consider a peripheral edge $\ov{ε}$ from $γ$ to $γ'$,
where $γ,γ'$ are $Ω$-admissible arcs.
Let $γ \sim \vec{ε}_0 γ₁ ⋅⋅⋅ γ_k \vec{ε}_k$,
$γ' \sim \vec{ε}_0' γ₁' ⋅⋅⋅ γ_l' \vec{ε}_l'$,
as in the construction above.
Let $α$ be a concatenation of peripheral 1-cells
that is homotopic to $\vec{ε}_0^\inv \vec{ε} \vec{ε}_0'$.
We define
\begin{equation}
\label{e:arb-edge-label}
u(\vec{ε}) = u(\vec{ε}_0) + u(α) - u(\vec{ε}_0')
\end{equation}
where $u(α)$ is the sum of the edge labels of the peripheral 1-cells in $α$.

In the following, we will abuse notation by
dropping the overline from the truncation $\ov{γ}$,
as it should be clear which we are referring to.
Let us introduce some notation:

\begin{definition}
\label{d:W}
For a curve $α$ that is a 1-cell of $\ov{τ}$,
we define $W_Ω(α)$ to be the matrix given by
\begin{equation}
\label{e:label-matrix}
W_Ω(γ) = \matw{w(γ)}
\;\;\; ; \;\;\;
W_Ω(\vec{ε}) = \matu{-u(\vec{ε})}
\end{equation}
More generally, for a concatenation $α = α₁ ⋅⋅⋅ α_k$ of such curves,
we define $W_Ω(α)$ to be the composition $W_Ω(α_k) ⋅⋅⋅ W_Ω(α₁)$.
\end{definition}

Note that $W_Ω(-)$ is contravariant with respect to composition,
i.e., $W_Ω(α ∘ α') = W_Ω(α') W_Ω(α)$,
which is natural in terms of order of operation,
since the $W_Ω$'s are meant to represent isometries,
particularly for $Ω = \geomlabel$
(see \rmkref{r:centering-isometry}).

\begin{lemma}
\label{l:TT-W-homotopic}
For any two homotopic curves $α,α'$
that are concatenations of peripheral edges or crossing arcs in $τ₀$,
we have $W_Ω(α) \sim W_Ω(α')$.
\end{lemma}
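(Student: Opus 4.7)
The plan is to reduce the statement to the flatness conditions that define an algebraic solution, namely the region equations \eqnref{e:region-eqn-matrix} and edge equations \eqnref{e:edge-eqn-matrix}. These equations say precisely that traversing the boundary of any 2-cell of $\ov{τ_0}$ (internal or boundary) using the matrices in \eqnref{e:label-matrix} yields the identity in $\PSL$. As noted in \rmkref{r:gauge-theory}, this is exactly the flatness of the lattice connection on the 1-skeleton of $\ov{τ_0}$ defined by $W_Ω$. So the lemma is a standard statement that flat connections have well-defined holonomy.

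To make this precise, I will first put $α$ and $α'$ in a common setting. Since both are concatenations of 1-cells of $\ov{τ_0}$, they are paths in the 1-skeleton $X^{(1)}$ of $\ov{τ_0}$, sharing endpoints. By cellular approximation, any free homotopy between them in $M$ (rel endpoints) may be replaced by a cellular homotopy lying in the 2-skeleton $X^{(2)}$. Such a cellular homotopy decomposes into a finite sequence of \emph{elementary homotopies}, each of which replaces a subpath $β$ of the current curve that traverses part of the boundary of some 2-cell $P$ with the complementary arc $β'$ along $∂P$; equivalently, the concatenation $β' \cdot β^{-1}$ equals $∂P$ (with suitable orientation). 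It therefore suffices to show that $W_Ω$ is unchanged by a single such elementary move.

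For an elementary move across a 2-cell $P$, the region equation (if $P$ is internal) or the edge equation (if $P$ is a boundary 2-cell) gives $W_Ω(∂P) \sim I$ in $\PSL$. Since $W_Ω$ is contravariant under concatenation and inverts under reversal of orientation, $W_Ω(β') \sim W_Ω(β)$ follows immediately from $W_Ω(β') W_Ω(β)^{-1} \sim W_Ω(β' \cdot β^{-1}) = W_Ω(∂P) \sim I$. Replacing $β$ by $β'$ inside the larger path leaves $W_Ω$ of the whole path unchanged (up to scalar), because $W_Ω$ of a concatenation is the product of the $W_Ω$'s of the pieces.

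The only mildly nontrivial point is the cellular approximation step: a priori, the homotopy in $M$ could wander through 3-cells, and one must be careful that after truncating $M^{∘}$ to $M$, the homotopy still lies in a CW-complex in which one can apply cellular approximation. This is routine because $\ov{τ_0}$ is a genuine CW decomposition of $M$ (the 2-skeleton $X^{(2)}$ contains the boundaries of all 3-cells), so the standard cellular approximation theorem applies verbatim. Once this is granted, the rest of the argument is purely combinatorial, reducing to finitely many applications of the region and edge equations.
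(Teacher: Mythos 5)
Your proof is correct and follows the same route as the paper's: the paper likewise reduces the claim to the flatness condition $W_\Omega(\partial P)\sim 1$ from the region/edge equations and invokes $\pi_1(\tau_0^{(2)})\simeq\pi_1(\tau_0)$ to push the homotopy into the 2-skeleton, which is exactly the cellular-approximation-plus-elementary-moves argument you spell out in more detail.
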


\begin{proof}
The region equations \eqnref{e:region-eqn-matrix}
boil down to $W_Ω(∂P) \sim I$, the identity matrix, for any 2-cell $P$ of $\ov{τ}$.
Since $α^\inv α'$ is homotopic to a trivial loop,
and removing the 3-cells in $\ov{τ}$ does not affect the fundamental group,
it follows that $W_Ω(α^\inv α') \sim I$.
\end{proof}

\begin{proposition}
\label{p:labels-well-defn}
The constructions above of labels $w(γ), u(\vec{ε})$
for $Ω$-admissible arcs $γ$ and peripheral edges $\vec{ε}$ between them
are independent of the choices made during the construction.
\end{proposition}
\begin{proof}
Suppose $γ$ is homotopic to a concatenation
of crossing arcs and peripheral edges
in two ways as in the construction above,
say $γ \sim ε_0 γ₁ ⋅⋅⋅ γ_k ε_k$ and $γ \sim ε_0' γ₁' ⋅⋅⋅ γ_l' ε_l'$
(we omit the overhead arrow).
We want to show that the crossing labels $w,w'$ for $γ$
that the construction above yields using these
two presentations of $γ$ are equal.

Let $ε$ be a concatenation of peripheral edges in $τ₀$
that is homotopic (in the relevant peripheral torus) to $ε_0^\inv ε_0'$;
similarly, let $ε'$ be a concatenation of peripheral edges in $τ₀$
that is homotopic (in the relevant peripheral torus) to $ε_l' ε_k^\inv$.

Let $α = γ₁ ε₁ ⋅⋅⋅ γ_k$,
$β = γ₁' ε₂' ⋅⋅⋅ γ_l'$,
$α' = ε β ε'$.
By definition, $α$ and $α'$ are homotopic
(to $ε_0^\inv γ ε_k^\inv$),
so by \lemref{l:TT-W-homotopic},
$W_Ω(α) \sim W_Ω(α') = W_Ω(ε') W_Ω(β) W_Ω(ε)$.

Thus we have
\begin{align*}
\begin{pmatrix}
0 & w \\
1 & 0
\end{pmatrix}
\sim
W_Ω(ε_k) W_Ω(α) W_Ω(ε_0)
\sim
W_Ω(ε_k) W_Ω(α') W_Ω(ε_0)
\sim
W_Ω(ε' ε_k) W_Ω(β) W_Ω(ε_0 ε)
\\
\sim
W_Ω(ε_l'^\inv ε' ε_k)
\begin{pmatrix}
0 & w' \\
1 & 0
\end{pmatrix}
W_Ω(ε_0 ε ε_0'^\inv)
\end{align*}

Since $ε_l'^\inv ε' ε_k$ and $ε_0 ε ε_0'^\inv$ lie in a peripheral torus,
$W_Ω(ε_l'^\inv ε' ε_k)$ and $W_Ω(ε_0 ε ε_0'^\inv)$
must be strict upper triangular,
hence by some basic algebraic argument,
the above equality implies that they must both be identity.
This immediately implies $w = w'$,
and $u(ε) = -u(ε_0) + u(ε_0')$, $u(ε') = u(ε_l') - u(ε_k)$.
The analogous result for edge labels follows easily from this as well.
\end{proof}

A similar argument shows that we have the analog of \lemref{l:TT-W-homotopic}
for an arbitrary collection of $Ω$-admissible arcs
and peripheral edges between them
(not just those from $\ov{τ}$):

\begin{lemma}
\label{l:TT-W-homotopic-arbitrary}
For any two homotopic curves $α,α'$
that are concatenations of $Ω$-admissible arcs and peripheral edges between them,
we have $W_Ω(α) \sim W_Ω(α')$.
\end{lemma}

\ \\
\subsection{Algebraic Solutions and $\PSL$-representations of the
Fundamental Group}
\label{s:alg-to-geom-reconstruction}
\ \\ \indent

For a hyperbolic 3-manifold $M$ with an ideal polyhedral decomposition $τ$,
the geometric labeling records data about the geometry
in terms of local data concentrated at edges of $τ$.
We give a procedure to reconstruct a ``geometry'' on $M$ from any algebraic solution,
which produces the complete hyperbolic geoemtry on $M$ when we start with
the geometric labeling.
The ``geometry'' is closely related to the induced parabolic representation
of the link group into $\PSL$ from \cite[Sect. 5]{TTmethod}.

Let us recall some notation from \cite{repnvol}.
Denote by $\wdht{M} = M ∪ \{x_i\}$ the compactification of $M$
obtained by adding an \emph{ideal point} to each end of $M$;
equivalently, $\wdht{M} = \ov{M} / T_i \sim x_i$.
Fix a basepoint $m₀ ∈ M$ and consider the universal cover $\wdtld{M}$.
Let $\hattld{M} = \wdtld{M} ∪ \{\wdtld{x}_i\}$ be constructed from $\wdtld{M}$
by adding lifts of ideal points in $\wdht{M}$; more precisely,
keeping in mind the concrete definition of $\wdtld{M}$ as the set of
homotopy classes of paths in $M$ starting at $m₀$,
i.e. $\wdtld{M} = \{[α : [0,1] \to M ] \;|\; α(0) = m₀\}$,
the ideal points of $\hattld{M}$ are homotopy classes of paths $α$
in $\wdht{M}$ starting at $m₀$ and ending at an ideal point,
such that $α([0,1)) ⊆ M$.
The action of the fundamental group $π₁(M,m₀)$ on $\wdtld{M}$,
given by $g ⋅ [α] = [gα]$ (i.e. by concatenation of paths),
naturally extends to $\hattld{M}$.
Each ideal point $\wdtld{x} = [α]$ of $\hattld{M}$ defines the peripheral subgroup
$T_{\wdtld{x}} \simeq \ZZ ⊕ \ZZ$ consisting of elements fixing $\wdtld{x}$;
it is given explicitly by
\[
T_{\wdtld{x}} := \{ [\ov{α}δ\ov{α}^\inv \;|\;\
	δ: \textrm{peripheral loop in neighborhood of }\wdtld{x};
	\;\; \ov{α}: \textrm{truncation of } α \}
⊆ π₁(M,m₀)
\]


\begin{definition}
\label{d:geom-realz}
Let $M$ be a 3-manifold,
and let $τ$ be an ideal polyhedral decomposition of $M$.
Fix a basepoint $m₀ ∈ M$, so that we have $\hattld{M}$
from the above discussion.
Choose a meridian $μ_i$ for each end of $M$.
Given an algebraic solution $Ω = (w(-),u(-))$ of $τ$,
a \emph{geometric realization of $Ω$ (for ideal pionts of $\hattld{M}$)}
is an assignment $z(\wdtld{x})$ of a parametrized point in $∂\HH$
to each ideal point $\wdtld{x}$ of $\hattld{M}$,
such that the geometric crossing and edge labels among the $z(\wdtld{x})$
are given by $Ω$. More precisely,
\begin{itemize}
\item for any 1-cell $γ$ of $τ$ with a lift $\wdtld{γ}$
that connects two ideal points $\wdtld{x},\wdtld{x}'$,
we have $\geomw{\wdtld{γ}_{\HH}} = w(γ)$, where
$\wdtld{γ}_{\HH}$ is the geodesic with endpoints $z(\wdtld{x})$ and $z(\wdtld{x}')$,

\item for any peripheral 1-cell $\vec{ε}$ in $\ov{τ}$ from $γ$ to $γ'$,
which lifts to some segment $\wdtld{ε}$ in $\wdtld{M}$
connecting $\wdtld{γ}$ to $\wdtld{γ}'$,
we have $\geomu{\wdtld{γ}_{\HH} \wdtld{γ}_{\HH}'} = u(\vec{ε})$.
\end{itemize}
\end{definition}

We can construct geometric realizations as follows.
Choose distinct ideal points $\wdtld{x}₀,\wdtld{x}₁$ of $\hattld{M}$
that are connected by the lift $\wdtld{γ}₀$ of a 1-cell $γ$ of $τ$.
Choose two distinct points $z(\wdtld{x}₀),z(\wdtld{x}₁) ∈ ∂\HH$
and parametrizations such that the geometric crossing label of the geodesic between
$z(\wdtld{x}₀)$ and $z(\wdtld{x}₁)$ is $w(γ₀)$.
Assign $z(\wdtld{x}₀)$ and $z(\wdtld{x}₁)$ to $\wdtld{x}₀$ and $\wdtld{x}₁$
respectively. The rest of the construction is based on an inductive procedure:
choose a new ideal point $\wdtld{x} ∈ \hattld{M}$
such that there is a 1-cell $γ$ of $τ$ with lift $\wdtld{γ}$ connecting $\wdtld{x}$
to some ideal point $\wdtld{x}'$ that has been assigned $z(\wdtld{x}') ∈ ∂\HH$,
and then we find the unique point parametrized point $z(\wdtld{x}) ∈ ∂\HH$
that is compatible with the assignments already made.
More precisely, there must be some lift $\wdtld{γ}'$ of a 1-cell $γ'$
used in a previous step to define the assignment of $z(\wdtld{x}')$ to $\wdtld{x}'$,
and it is realized in $\HH$ as some geodesic $γ_{\HH}'$
(with one endpoint at $z(\wdtld{x}')$).
We take $γ_{\HH}$ to be the geodesic
emanating from $z(\wdtld{x}')$ such that the geometric edge label
$\geomu{γ_{\HH}' γ_{\HH}}$ associated to $(γ_{\HH}',γ_{\HH})$
is exactly the sum of edge labels along the path of peripheral edges connecting
$\wdtld{γ}'$ to $\wdtld{γ}$.
Then we set $z(\wdtld{x})$ to be the other endpoint of $γ_{\HH}$,
and we parametrize $z(\wdtld{x})$ so that the geometric crossing label
$\geomw{γ_{\HH}}$ is $w(γ)$.

We can get a more concrete picture of this procedure
by choosing $z(\wdtld{x}₀) = ∞$ and $z(\wdtld{x}₁) = 0$,
and give the parametrization $(∞,H₀,ψ₀)$ on $z(\wdtld{x}₀)$,
where $H₀$ is the horizontal plane at height 1, and $ψ₀$ is the projection
down to $\CC ⊆ ∂\HH$.
Then the parametrization for $z(\wdtld{x}₁)$ is obtained by sending
the parametrization of $z(\wdtld{x}₀)$ to $z(\wdtld{x}₁)$ by the isometry
$W_{Ω}(γ₀) = (0 \; w(γ₀); 1 \; 0) ∈ \PSL$.
In general, let $\wdtld{x} ∈ \hattld{M}$ be an ideal point,
and let $\wdtld{x}₀' = \wdtld{x}₀, \wdtld{x}₁',...,\wdtld{x}_k' = \wdtld{x}$
be a sequence of ideal points in $\hattld{M}$ connected via
lifts $\wdtld{γ}₁,...,\wdtld{γ}_k$ of 1-cells $γ₁,...,γ_k$ of $τ$.
Let $\vec{ε}_i$ a path of peripheral edges connecting $γ_{i-1}$ to $γ_i$
that lifts to a path connecting $\wdtld{γ}_{i-1}$ to $\wdtld{γ}_i$,
and let $u_i$ be the sum of the edge labels along $\vec{ε}_i$.
Then we assign to $\wdtld{x}$ the parametrized point $(φ_{\wdtld{x}})_*((∞, H₀, ψ₀))$,
where $φ_{\wdtld{x}}$ is the isometry
\begin{equation}
\label{e:phi-xtilde}
φ_{\wdtld{x}} := W_{Ω}(\vec{ε}₁ γ₁ ⋅⋅⋅ \vec{ε}_k γ_k)
= \matw{γ_k} \matu{-u_k} ⋅⋅⋅ \matw{γ₁} \matu{-u₁}
\end{equation}

\begin{lemma}
\label{l:geom-realzn-indep}
The geometric realization $z(-)$ constructed above
only depends on the choice of the first two ideal points and their assignments.
Moreover, for any geometric realization $z'(-)$ constructed using different choices,
there exists a unique isometry $φ$ sending $z(\wdtld{x})$ to $z'(\wdtld{x})$
as parametrized points for every ideal point $\wdtld{x} ∈ \hattld{M}$.
\end{lemma}
\begin{proof}
All the constructions in the inductive portion are local,
in the sense that the parametrized point $z(\wdtld{x})$
only depends on a previous point $z(\wdtld{x}')$ and some 1-cell connected to it.
More precisely, suppose we choose the same ideal points $x₀,x₁$ to start,
but choose different parametrized points
$z'(\wdtld{x}₀)$ and $z'(\wdtld{x}₁)$;
there exists an isometry $φ'$ that sends $z(\wdtld{x}₀)$ to $z'(\wdtld{x}₀)$
(as parametrized points), and there is a unique such $φ'$ that sends
the geodesic between $z(\wdtld{x}₀)$ and $z(\wdtld{x}₁)$ to
the geodesic between $z'(\wdtld{x}₀)$ and $z'(\wdtld{x}₁)$,
which forces $φ_*'(z(\wdtld{x}₁)) = z'(\wdtld{x}₁)$ as well.
Then, assuming we keep the order in which we choose the next new ideal point,
we must have $z'(\wdtld{x}) = φ_*'(z(\wdtld{x}))$.

So we can assume without loss of generality that $z(\wdtld{x}₀) = ∞$
with parametrization $(∞,H₀,ψ₀)$ and $z(\wdtld{x}₁) = 0$ as above.
By \lemref{l:TT-W-homotopic-arbitrary},
the isometry \eqnref{e:phi-xtilde} is independent on the path taken,
and hence $z(\wdtld{x})$ is independent on the order of choosing new ideal points.

Finally, if we choose a different pair of starting ideal points,
say $\wdtld{x}₀'$ and $\wdtld{x}₁'$, and construct some $z'(-)$,
then by similar considerations, the construction starting with
$\wdtld{x}₀$ and $\wdtld{x}₁$ with initial assignments
$z''(\wdtld{x}₀) := z'(\wdtld{x}₀)$ and $z''(\wdtld{x}₁) := z'(\wdtld{x}₁)$
gives the same assignment, i.e. $z'' \equiv z'$.
\end{proof}

\begin{remark}
\label{r:geom-realzn-adm}
Geometric realizations give a very concrete picture of $Ω$-admissibility:
a 1-cell $γ$ is $Ω$-admissible if and only if $z(\wdtld{x}) ≠ z(\wdtld{x}')$,
where $\wdtld{x},\wdtld{x}'$ are ideal points of $\hattld{M}$
that are connected by a lift of $γ$.
\end{remark}

\begin{remark}
\label{r:geom-realzn-basepoint}
Let $\wdtld{M}'$ be the universal cover constructed with basepoint $m₀'$.
A path $γ₀: m₀' \to m₀$ determines a map $γ₀⋅-: \wdtld{M} \to \wdtld{M}'$,
given by $[α] \mapsto [γ₀α]$, which commutes with the covering maps to $M$.
The map also extends to ideal points, and we also write
$γ₀⋅-: \hattld{M} \to \hattld{M}'$.
Conjugation by $γ₀$ gives a group homomorphism
$γ₀⋅-⋅γ₀^\inv: π₁(M,m₀) \to π₁(M,m₀')$,
and the map $γ₀⋅-$ is equivariant with respect to this group homomorphism.
Let $z'(-)$ be a geometric realization of $Ω$ for ideal points of $\hattld{M}'$.
Precomposing with $γ₀⋅-$, we obtain a geometric realization of $Ω$
for ideal points of $\hattld{M}$.
Thus, the dependence of geometric realizations on the choice of basepoint is mild,
even essentially trivial if we only work locally, in the sense that
the restriction of geometric realizations to simply connected domains
does not depend on the basepoints.

Even more concretely,
consider a polyhedron $P ∈ τ$ in the polyhedral decomposition of $M$.
Let $\wdtld{P}$ be a lift of $P$ in $\wdtld{M}$.
Let $z|_{\wdtld{P}}$ denote the restriction of the geometric realization $z'$
to the vertices of $\wdtld{P}$.
Identifying $\wdtld{P}$ and $P$ by the covering map,
$z|_{\wdtld{P}}$ is an assignment of parametrized points in $∂\HH$
to the vertices of $P$ such that crossing and edge labels align
(as in \defref{d:geom-realz});
we call such an assignment a geometric realization of $Ω$ restricted to $P$.
If we chose a different lift $\wdtld{P}'$, the restriction $z|_{\wdtld{P}'}$
will differ from $z|_{\wdtld{P}}$ by a global isometry of $\HH$.
Choosing a different $z$ also changes $z|_{\wdtld{P}}$ by a global isometry.
Indeed, it is easy to show that any geometric realization of $Ω$ restricted to $P$
is given by $z|_{\wdtld{P}}$ for some $z,\wdtld{P}$.
Then the ``essentially trivial'' dependence of geometric realizations on
the choice of basepoint refers to the fact that using geometric realizations
of $Ω$ for $\wdtld{M}'$ in the above discussion yields
the same geometric realizations of $P$ (up to global isometry).
\end{remark}

Observe that given a geometric realization $z(-)$ and some $g ∈ π₁(M,m₀)$,
we can consider the assignment $z^{g}(\wdtld{x}) := z(g⋅\wdtld{x})$,
which can easily be checked to be a geometric realization as well.
Hence by \lemref{l:geom-realzn-indep}, there exists a unique isometry
$φ^g$ such that $z^{g}(\wdtld{x}) = φ_*^g(z(\wdtld{x}))$.
The assigment $g \mapsto φ^g$ respects composition, hence:

\begin{definition}
The \emph{$\PSL$-representation underlying a geometric realization $z$ of $Ω$}
is the group homomorphism $ρ_{Ω,z}: π₁(M,m₀) \to \PSL$ given by $g \mapsto φ^g$,
where $φ^g$ is the unique isometry such that $φ_*^g(z(\wdtld{x})) = z(g⋅\wdtld{x})$.
\end{definition}

Note that $ρ_{Ω,z}$ heavily depends on $z$, and importantly,
it is not enough to record the image of $z$
(as a collection of parametrized points in $∂\HH$),
but also the rest of the data of $z$, i.e. the exact assignments to ideal points.
Indeed, given another geometric realization $z'$,
there exists some isometry $φ$ such that $φ_* ∘ z = z'$.
By definition, we have (omitting the subscript $*$):
\[
ρ_{Ω,z'}(g)(z'(\wdtld{x})) = z'(g⋅\wdtld{x})
= φ(z(g⋅\wdtld{x}))
= φ(ρ_{Ω,z}(g)(z(\wdtld{x}))
= φ(ρ_{Ω,z}(g)(φ^\inv(z'(\wdtld{x})))
\]
In other words, $ρ_{Ω,z'}(g) = φ ∘ ρ_{Ω,z}(g) ∘ φ^\inv$.
In particular, when $z'(\wdtld{x}) = z(h⋅\wdtld{x})$ for some $h ∈ π₁(M,m₀)$,
i.e. $z' = ρ_{Ω,z}(h) ∘ z$, then $ρ_{Ω,z'}(g) = ρ_{Ω,z}(hgh^\inv)$;
in this case, the images of $z'$ and $z$ are the same as sets,
but the resulting $\PSL$-representations are different,
albeit only by an inner automorphism.

We also note that $ρ_{Ω,z}$ is always a parabolic representation,
since for an element $g$ in the peripheral subgroup $T_{\wdtld{x}}$
associated to an ideal point $\wdtld{x}$ of $\hattld{M}$,
$g$ fixes $\wdtld{x}$ by definition,
so $ρ_{Ω,z}(g)$ must preserve the parametrized point $z(\wdtld{x})$,
and hence is a parabolic isometry (or the identity).

\begin{remark}
When $M$ is a link complement as in \secref{s:original-TT},
the representations $ρ_{Ω,z}$ are the same as the
``holonomy representation associated to $Ω$'' in \cite[Sect. 5]{TTmethod}.
\end{remark}

\begin{remark}
\label{r:meridian-scaling-geom-realz}
In \rmkref{r:meridian-scaling}, we discussed
the mild modification of the TT method by changing the normalizations,
where the chosen meridian $μ_i$ on each torus $T_i$ has edge label
$λ_i ∈ \CC^\times$.
Let $Ω'$ be an algebraic solution to this modified TT method;
we know that it is a simple rescaling of an algebraic solution $Ω$
to the original TT method (i.e. $μ_i$ are normalized to 1).
We may apply the methods above to produce a geometric realization $z'$ of $Ω'$.
Let $x_i ∈ \hat{M}$ be the ideal point corresponding to $T_i$,
and let $\wdtld{x_i} ∈ \hattld{M}$ be a lift of $x_i$.
By construction, two geodesics emanating from $z'(\wdtld{x_i})$
that are separated by $μ_i$ would have a geometric edge label of $λ_i$ between them.
There is a unique parametrization of $z'(\wdtld{x_i})$ where
this geometric edge label would be 1.
denote by $z''(\wdtld{x_i})$ the parametrized point $z'(\wdtld{x_i})$
with this parametrization.
It is not hard to see that $z''(-)$ is in fact a geometric realization of $Ω$.

This is a similar to the observation about the effect on the geometric labeling
when we change normalizations, in that it amounts to changing parametrizations.
The upshot is that the geometry, which in this context of geometric realizations
refers to the relative position of the $z(\wdtld{x})$'s,
is not affected by changing normalizations.
\end{remark}

Now we describe the converse, i.e. extracting $Ω$ and $z$ from a representation.
Let $M$ be a 3-manifold with basepoint $m₀$
and choices of meridians $μ_i$ on each peripheral torus of $M$
(recall that the meridians are needed in the TT method for normalization).
Suppose we are given a parabolic representation $ρ: π₁(M,m₀) \to \PSL$.
For an ideal point $\wdtld{x} = [α]$ in $\hattld{M}$,
the peripheral subgroup $T_{\wdtld{x}} = \{ [\ov{α}δ\ov{α}^\inv] \} ⊆ π₁(M,m₀)$
is mapped by $ρ$ to some parabolic subgroup.
For the element $h_{μ_i,α} := [\ov{α}μ_i\ov{α}^\inv]$,
if $ρ(h_{μ_i,α})$ is not the identity,
then we take $z(\wdtld{x})$ to be the fixed point of $ρ(h_{μ_i,α})$,
and there exists a unique parametrization $(z(\wdtld{x}),H,ψ)$ such that
$ρ(h_{μ_i,α})$ restricts to a translation on $H$ that moves everything
by distance of 1, and under $ψ$ this translation is just the operation of adding 1.
Thus, $ρ(h_{μ_i,α})$ is conjugate to $(1 \; 1; 0 \; 1)$ under an isometry
sending the parametrized point $(∞,H₀,ψ₀)$ to $(z(\wdtld{x}),H,ψ)$.

\begin{lemma}
\label{l:geom-realzn-conj}
Let $M$ be a 3-manifold with basepoint $m₀$
and choices of meridians $μ_i$ on each peripheral torus,
and suppose we are given a parabolic representation $ρ: π₁(M,m₀) \to \PSL$
satisfying the non-degeneracy condition $ρ(h_{μ_i,α}) ≠ \id$ as above.
Then assignment $z$ constructed above is $π₁(M,m₀)$-equivariant.
Thus, for an ideal polyhedral decomposition $τ$ of $M$,
if the endpoints of every 1-cell $γ$ of $τ$ have different images under $z$,
then we can define the algebraic solution $Ω = (w(-),u(-))$ such that
$z$ is the geometric realization of $Ω$.
In this case, we have $ρ = ρ_{Ω,z}$.

Moreover, if $ρ'$ is related to $ρ$ by an outer automorphism,
i.e. $ρ'(-) = φρ(-)φ^\inv$ for some isometry $φ$,
then the assignment $z'$ constructed from $ρ'$ is $z' = φ_* ∘ z$,
which is another geometric realization of the same algebraic solution $Ω$.
\end{lemma}
\begin{proof}
Let $g ∈ π₁(M,m₀)$ and let $\wdtld{x} = [α] ∈ \hattld{M}$ be an ideal point.
Recall that $g$ sends it to $g⋅\wdtld{x} = [gα]$.
The construction assigns to $z(g⋅\wdtld{x})$ the fixed point of $ρ(h_{μ_i,gα})$.
Since $h_{μ_i,gα} = [\ov{gα}μ_i\ov{gα}^\inv] = gh_{μ_i,α}g^\inv$,
the fixed point of $ρ(h_{μ_i,gα}) = ρ(g)ρ(h_{μ_i,α})ρ(g)^\inv$
is $ρ(g)(z(\wdtld{x}))$, which establishes $π₁$-covariance.

Under the additional condition of endpoints having different images,
we can set, for each 1-cell $γ$ of $τ$ with endpoints $x,x'$ in $\hat{M}$,
the crossing label $w(γ) = \geomw{\wdtld{γ}_{\HH}}$ for $Ω$,
where $\wdtld{γ}_{\HH}$ is the geodesic from $z(\wdtld{x})$ to $z(\wdtld{x}')$,
and $\wdtld{x}, \wdtld{x}' ∈ \hattld{M}$ are ideal points
connected by a lift $\wdtld{γ}$ of $γ$.
By $π₁$-covariance, $\geomw{\wdtld{γ}_{\HH}}$ does not depend on the choice
of the lift $\wdtld{γ}$.
The edge labels for $Ω$ are similarly taken from the geometric edge labels.
By definition, $z$ is the geometric realization of the labeling $Ω$ so constructed,
and since $z(g⋅\wdtld{x}) = ρ(g)_*(z(\wdtld{x}))$, we have $ρ = ρ_{Ω,z}$.

For the last part, given $ρ'(-) = φρ(-)φ^\inv$,
we assign to $z'(\wdtld{x})$ the fixed point of
$ρ'(h_{μ_i,α}) = φρ(h_{μ_i,α})φ^\inv$, which is $φ(z(\wdtld{x})$.
The rest follows from similar arguments as the above.
\end{proof}

We summarize the results above as follows:
\begin{proposition}
\label{p:alg-soln-repn-class}
Let $M$ be a 3-manifold with basepoint $m₀$
and choices of meridians $μ_i$ on each peripheral torus.
Let $τ$ be an ideal polyhedral decomposition of $M$.
The underlying representation $ρ_{Ω,z}: π₁(M,m₀) \to \PSL$
of a geometric realization $z$ of $Ω$ changes by an (outer) automorphism with $z$,
hence we have a well-defined map
\begin{align*}
\{\textrm{algebraic solution}\} &\to
\{\textrm{conjugacy classes of parabolic representations }
	π₁(M,m₀) \to \PSL \}
\\
Ω &\mapsto [ρ_{Ω,z}] \ .
\end{align*}
Then this map is injective,
and its image consists of representations $ρ$ that satisfy
the following non-degeneracy conditions:
\begin{itemize}
\item $ρ(μ) ≠ \id$ for any element $μ ∈ π₁(M,m₀)$ in the conjugacy class
	represented one of the meridians $μ_i$,
\item the fixed points of the parabolic subgroups $ρ(T_{\wdtld{x}})$
	and $ρ(T_{\wdtld{x}'})$ are distinct for ideal points
	$\wdtld{x},\wdtld{x}' ∈ \hattld{M}$ connected by a lift of a 1-cell of $τ$
	(recall that $T_{\wdtld{x}}$ is the peripheral subgroup fixing $\wdtld{x}$).
\end{itemize}
\end{proposition}

\ \\
\subsection{Algebraic Solutions and Volume}
\label{s:alg-vol}
\ \\ \indent

We establish one of the main results of the paper,
namely that the algebraic solution with the maximum volume is the geometric labeling.
We heavily rely on the results of \cite{repnvol},
which concerns volumes of $\PSL$-representations of the fundamental group
of a 3-manifold, and we translate them into results about algebraic solutions
by the methods discussed in \secref{s:alg-to-geom-reconstruction}.

In this section, we deal with 3-manifolds with toric ends
that are not necessarily hyperbolic,
but nonetheless are ``cusped'', i.e. toric ends come with
a fixed tubular neighborhood structure $T_i \times [0,∞) ↪ M$
that extend to $T_i \times [0,∞] \to \hat{M}$ such that
$T_i \times \{∞\}$ is sent to ideal point $x_i$,
and these neighborhoods are pairwise disjoint.
All results will be independent of this choice.

We first recall several definitions and results from \cite{repnvol}.

\begin{definition}{\cite{dunfield}, \cite[Defn. 2.5]{repnvol}}
Let $M$ be a 3-manifold, with basepoint $m₀$,
and let $\wdtld{M}$ be its universal cover.
Let $ρ : π₁(M,m₀) \to \PSL$ be a representation.
A \emph{pseudo-developing map for $ρ$} is a piecewise smooth map
$D_{ρ} : \wdtld{M} \to \HH$ which is equivariant with respect to
the actions of $π₁(M,m₀)$ on $\wdtld{M}$ via deck transformations
and on $\HH$ via $ρ$,
such that $D_{ρ}$ extends to a continuous map
$D_{ρ} : \hattld{M} \to \ov{\HH}$, sending ideal points to $∂\HH$.
Moreover, each ray $\{t\} \times \{0 ≤ s < ∞\}$ in the end tubular neighborhoods
is sent to a geodesic in $\HH$ such that $s$ is an arc-length parameter.
\end{definition}

\begin{definition}{\cite[Def. 4.1]{repnvol}}
Let $D_{ρ}$ be a pseudo-developing map for $ρ$.
Let $ω$ be the volume form on $\HH$.
Let $D_{ρ}^*ω$ be the pull-back of $ω$ to $\wdtld{M}$.
Since $D_{ρ}$ is $π₁$-equivariant, $D_{ρ}^*ω$ is a lift of some 3-form on $M$,
which we denote also by $D_{ρ}^*ω$.
The \emph{volume of $D_{ρ}$}, denoted $\vol(D_{ρ})$, is the integral
\[
\vol(D_{ρ}) = \int_M D_{ρ}^*ω \ .
\]
\end{definition}

\begin{proposition}{\cite[Thm. 4.9]{repnvol}}
\label{p:pseudo-vol-inv}
The volume of a pseduo-developing map $D_{ρ}$ for $ρ$ only depends on $ρ$.
\end{proposition}

By \cite[Lem. 2.7]{repnvol}, any representation $ρ$ admits a pseudo-developing map.
Thus, we may define the volume of a representation to be
\[
\vol(ρ) := \vol(D_{ρ}) \ .
\]

A weaker version of \prpref{p:pseudo-vol-inv} was proved \cite{dunfield},
which required that the pseudo-developing maps that are being compared
have the same extensions to the ideal points.
The proof of \prpref{p:pseudo-vol-inv} is based on this result:
by modifying one pseudo-developing map at ends
so that it agrees with the other on ideal points,
the volume is changed only by a small amount, which can be made arbitrarily small.

\begin{proposition}{\cite[Thm. 7.2]{repnvol}}
\label{p:repn-max-vol}
Let M be a complete, oriented hyperbolic 3-manifold of finite volume.
Let $Γ ≅ π₁(M)$ be the subgroup of $\PSL$ such that $M = \HH / Γ$.
Let $ρ : Γ \to \PSL$ be a representation.
Then $|\vol(ρ)| ≤ |\vol(M)|$ and
equality holds if and only if $ρ$ is discrete and faithful.
\end{proposition}

Now we want to apply these results to algebraic solutions of the TT method.
We fix an ideal polyhedral decomposition $τ$ of $M$
and choices of meridians $μ_i$ on each peripheral torus.
We have seen in \secref{s:alg-to-geom-reconstruction} that
algebraic solutions $Ω$ are intimately related to representations
of the fundamental group into $\PSL$, namely by \prpref{p:alg-soln-repn-class},
to every $Ω$ is associated a conjugacy class of representations $[ρ_{Ω,z}]$,
which (by \lemref{l:geom-realzn-conj}) is the same as the set of
$ρ_{Ω,z}$ over all geometric realizations $z$ of $Ω$.
By definition, isometries of $\HH$ preserve the volume form on $\HH$,
hence the volume of $ρ_{Ω,z}$ is unchanged under conjugation,
so we may define:

\begin{definition}
\label{d:vol-alg-soln}
The \emph{volume of $Ω$}, denoted $\vol(Ω)$,
is the volume of the underlying representation of a geometric realization $z$ of $Ω$,
i.e. $\vol(Ω) := \vol(ρ_{Ω,z})$.
\end{definition}

We give an alternative method to compute the volume of an algebraic solution,
and show that one can compute it from $Ω$ without using a geometric realization.
The following is adapted from \cite[Def 3.4]{repnvol}:

\begin{proposition}
\label{p:repn-vol-sum}
Let $τ'$ be an ideal triangulation of $M$ (not necessarily a refinement of $τ$).
Let $z$ be a geometric realization of $Ω$.
For each tetrahedron $Δ$ of $τ'$, let $\wdtld{Δ}$ be a lift of $Δ$ to $\wdtld{M}$.
Define the \emph{signed volume $\vol(Ω,Δ)$} to be
the volume of the convex hull of vertices of $z(\wdtld{Δ})$,
which is 0 if the vertices of $z(\wdtld{Δ})$
lie in the boundary of a geodesic plane,
and is negative if $z|_{\wdtld{Δ}}$ is orientation-reversing.
Then
\[
\vol(ρ_{Ω,z}) = \sum_{Δ ∈ τ'} \vol(Ω,Δ)
\]
\end{proposition}
\begin{proof}
The sum on the right-hand side is simply the straight volume \cite[Def 4.3]{repnvol},
and the equality above is exactly the content of \cite[Thm 4.10]{repnvol}.
\end{proof}

Using the methods of \secref{s:extension-adm},
we compute the crossing and edge labels
of the interal and peripheral edges of $\ov{τ}'$ whenever possible;
recall that such internal edges are called $Ω$-admissible.
For a tetrahedron $Δ$ in $τ'$ that has at least one non-$Ω$-admissible edge,
we define its volume to be 0.
For a tetrahedron $Δ$ in $τ'$ whose edges are all $Ω$-admissible,
its shape parameter/modulus along an edge $e$ of $Δ$
is given by $ζ_{Δ,e} := ζ_{γ_{01};γ_{02}γ_{13}} = w(γ_{01}) /
u(\vec{ε}_{γ_{01}γ_{02}}) u(\vec{ε}_{γ_{01}γ_{13}})$,
where $γ_{ij}$ is the edge between vertices $v_i$ and $v_j$ of $Δ$,
$e = γ_{01}$,
and the orientation $[\vec{γ}_{01}, \vec{γ}_{02}, \vec{γ}_{03}]$
agrees with the orientation on $M$
($\vec{γ}_{ij}$ is oriented from $v_i$ to $v_j$).
The shape parameter $ζ_{Δ,e}$ takes values in $\CC \backslash \{0,1\}$.
To any such value $ζ ∈ \CC \backslash \{0,1\}$,
there exists a geometric ideal tetrahedron $Δ_{ζ}$ in $\HH$
such that $ζ$ is the shape parameter/modulus of $Δ_{ζ}$ along one of its edges;
we define $\vol(ζ)$ to be the volume of $Δ_{ζ}$,
which is negative when $ζ$ has negative imaginary part
and is 0 when $ζ$ is real.
Clearly $\vol(ζ_{Δ,e}) = \vol(Ω,Δ)$, hence we have:

\begin{proposition}
\label{p:vol-adm}
Let $τ'$ be an ideal triangulation of $M$ (not necessarily a refinement of $τ$).
Then
\[
\vol(Ω) = \sum_{\substack{Δ ∈ τ' \\ Ω\textrm{-adm}}} \vol(ζ_{Δ,e})
\]
where the sum is over tetrahedra whose edges are all $Ω$-admissible.
\end{proposition}

\begin{remark}
\prpref{p:vol-adm} implies in particular that the sum on the right
is independent of triangulation $τ'$.
We can also prove it in a more combinatorial manner, which we sketch here.
By \cite{matveev, piergallini},
any two ideal triangulations are related by a sequence of
Pachner 2-3 and 2-0 moves (also known as Matveev-Piergallini-moves),
thus it suffices to prove that the sum is invariant under these moves.
This is straightforward but rather tedious, as there are quite a few cases
to consider, e.g. combining positively and negatively tetrahedra.
We just consider the Pachner 2-0 move,
and leave the 2-3 move to the interested reader.

The Pachner 2-0 move removes a pair of tetrahedra $Δ,Δ'$ that share two faces $F,F'$
and glues up opposite faces. $Δ,Δ'$ share all of their vertices,
and share all except one of their edges, namely the edge that does not meet $F$ nor $F'$,
say $e ⊆ Δ$ and $e' ⊆ Δ'$.
It is not hard to see that if all of the edges are $Ω$-admissible, then $w(e) = w(e')$,
and in fact the shape parameters must be complex conjugates of each other,
$ζ_{Δ',e'} = \ov{ζ_{Δ,e}}$, hence their volumes add up to 0,
and the Pachner 2-0 move does not change the sum.
We also need to consider the cases when some edge is not $Ω$-admissible.
If an edge besides $e$ and $e'$ is not $Ω$-admissble,
then both tetrahedra have volume 0.
If $e$ is not $Ω$-admissible, then so is $e'$, and vice versa,
so their volumes are both 0.
\end{remark}

\begin{remark}
Triangulations with non-positive volume tetrahedra
(shape parameter real or negative imaginary value)
was first considered in \cite{negative-triangulation}.
\end{remark}

Finally, applying \prpref{p:repn-max-vol} to volumes of algebraic solutions
with \defref{d:vol-alg-soln}, we have:

\begin{theorem}
\label{t:max-vol}
Let $M$ be a cusped hyperbolic 3-manifold,
and let $τ$ be an ideal polyhedral decomposition of $M$
such that every edge is geodesic-like.
Fix meridians $μ_i$ for each end of $M$,
determining a canonical parametrization of $M$.
Then the algebraic solution with the maximal volume is the geometric labeling:
\[
\geomlabel = \underset{Ω: \text{ alg.\,sol.}}{\argmax} \  \vol(Ω)
\]
\end{theorem}

\section{Application to Links}
\label{s:application-links}

We apply the general method, as laid out in \secref{s:TT-3mfld},
to 3-manifolds arising as complements of links.
We first recover the original TT method of \cite{TTmethod} for links in $S³$,
then consider links in the thickened torus $\TT$.
We also consider FALs in $S³$ and $\TT$ separately from general links,
as their complements admit certain polyhedral decompositions (see \secref{s:FAL})
for which the TT method is simpler and more amenable to symmetry arguments.
We note that the case of FALs in $S³$ has already been considered in \cite{rochyTT}
(see more in \secref{s:TT-FAL-S3}).

\subsection{Links in $S³$}
\label{s:original-TT}
\ \\ \indent
We observe that the original TT method, as laid out in \secref{s:recap},
is obtained from the above generalization
by appying it to the Menasco decomposition.
Recall that the Menasco decomposition of the complement $S³ \backslash L$
of a link $L$ with link diagram $D$
is an ideal polyhedral decomposition of $S³\backslash L$
with exactly two 3-cells, one 2-cell for each region of $D$,
and one 1-cell for each crossing.
In the truncation of the Menasco decomposition,
the internal 1-cells of correspond to the crossing arcs
while the boundary 1-cells correspond to the peripheral edges,
as suggested by the names of the labels,
and the vertices correspond to peripheral endpoints.
It is easy to see that the edge equations and normalization
are equivalent, and the region equations are actually the same
after taking the inverse on both sides.

\begin{definition}{\cite[Def. 1.1]{TTmethod}}
\label{d:taut}
A diagram of a hyperbolic link is \emph{taut}
if each associated checkerboard surface is
incompressible and boundary incompressible in the link complement,
and moreover does not contain any simple closed curve
representing an accidental parabolic.
\end{definition}


The following proposition is just a reformulation of the argument
after Definition 1.1 in \cite{TTmethod}:

\begin{proposition}{\cite{TTmethod}}
\label{p:taut-menasco}
Let $τ$ be the Menasco decomposition associated to a link diagram $D$
of a hyperbolic lik $L$.
If $D$ is taut, then all 1-cells of $τ$ are geodesic-like.
\end{proposition}

In particular, in light of \prpref{p:geom-sol-exist},
this means that the TT method on a taut link diagram
always admits the geometric labeling as a solution.

\begin{remark}
\label{r:non-taut}
The TT method can be applied to non-taut link diagrams,
i.e. there are hypberolic links that admit a non-taut link diagram
such that every crossing arc is geodesic-like.
For example, consider a reduced alternating link diagram $D$
of a hyperbolic link $L$, which is taut by \cite[Prop. 1.2]{TTmethod}.
Consider a region $R$ of $D$ and two segments $a,a'$ of $L$ in its boundary.
Let $γ$ be an arc connecting $a$ to $a'$;
since $D$ is taut, by \cite[Prop. 2.1]{TTmethod}, $γ$ is geodesic-like.
Let $D'$ be the link diagram obtained by performing two Reidemeister II moves,
bringing $a'$ over $a$ as in \figref{f:non-taut},
and let $R'$ be the new 4-sided region formed, bounded by $a$ and $a'$.
Then the four crossing arcs around $R'$ (connecting the top and bottom segments
at the four vertices of $R'$) are homotopic to $γ$,
and in particular are all geodesic-like.
Thus, the TT method works on $D'$, since every crossing arc is geodesic-like.
However, $D'$ is not taut, as the checkerboard surface not containing $R'$
admits a compression disk (essentially given by $R'$).
\end{remark}

\begin{figure}[ht]
\includegraphics[height=5cm]{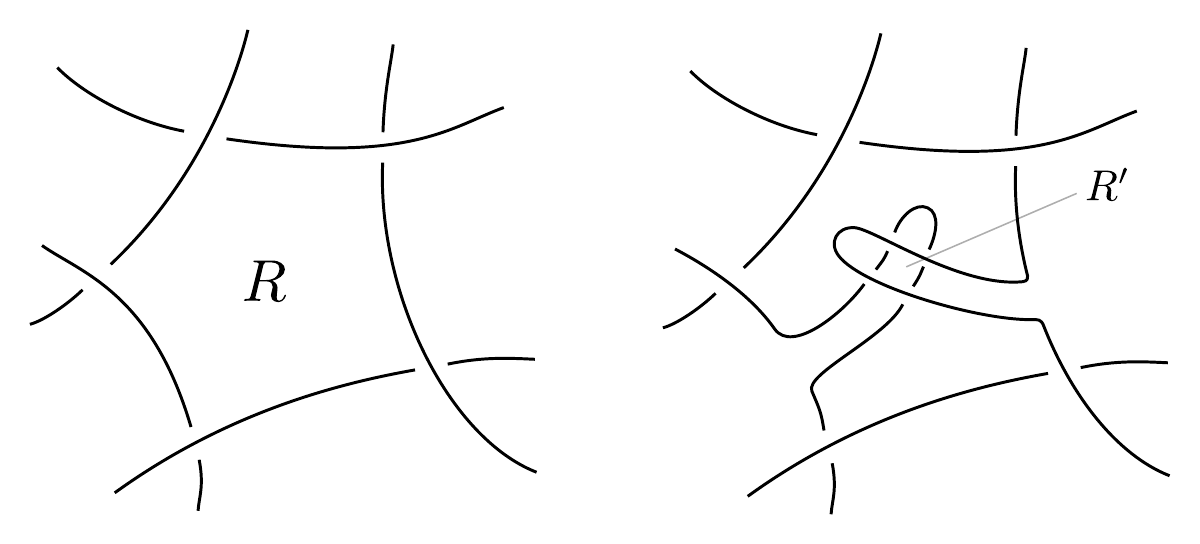}
\caption{Modifying a taut diagram $D$ (left) into a non-taut diagram $D'$ (right).
}
\label{f:non-taut}
\end{figure}


\ \\
\subsection{Links in $T² \times (-1,1)$}
\label{s:TT-thickened-torus}
\ \\ \indent
We apply the generalized TT method of \secref{s:TT-general}
to an ideal polyhedral decomposition of $\TT - L$
(recall that $\TT = T² \times (-1,1)$)
that closely related to the Menasco decomposition (see \secref{s:original-TT}),
which we describe below.
Note that one could identify $\TT$ with the complement of the Hopf link $H$ in $S³$,
then apply the original TT method directly to $H ∪ L$,
but this is somewhat unnatural as it requires an additional choice
of a link diagram for $H ∪ L$.

Let $L$ be a link in $\TT$ with a cellular link diagram $D$ in $T²$.
Let $\north,\south$ denote the top and bottom ideal vertices of $\TT$,
i.e., $\north = T² \times 1$, $\south = T² \times -1$.
Choose simple closed curves $μ_N,μ_S$ in the peripheral tori
around $\north, \south$ respectively to act as meridians.
For each crossing $c$ of $D$, let $γ_c$ be the crossing arc at $c$.
For a region $R$ of $D$, with boundary consisting of
edges and vertices $e₁,c₁,\ldots,e_k,c_k$,
let $F_R$ be the ideal polygon embedded in $\TT - L$ with sides
$γ_{c₁},\ldots,γ_{c_k}$ and ideal vertices at segments $e₁,\ldots,e_k$ of $L$,
such that the interior of $F_R$ projects homeomorphically onto $R$
under the link projection.
We call $F_R$ a \emph{regional face}.

The union of all the $F_R$'s separates $\TT$ into two pieces.
Denote the piece connected to $\north,\south$ by $Q_N, Q_S$ respectively.
In relation to the torihedral decomposition,
$Q_N$ and $Q_S$ are obtained from the two ideal torihedra by identifying
pairs of edges corresponding to the same crossing.
At this point, the procedure is the same as with the Menasco decomposition.
However, since $Q_N$ and $Q_S$ are not 3-balls,
we need to decompose them further, wherein the cellularity of $D$ is needed.

An \emph{overpass segment}
refers to a maximal segment of the link
(or equivalently, a maximal contiguous sequence of edges of $D$)
that lies between two underpasses,
and similarly for an \emph{underpass segment}.
For overpass (resp. underpass) segments $a₁,a₂$,
we say that $a₁$ \emph{runs over (resp. under)} $a₂$
if $a₁$ goes over (resp. under) a crossing which is an endpoint of $a₂$;
we say that they are \emph{adjacent} if one runs over the other.

For each overpass segment $a$,
let $γ_a$ be a crossing arc
\footnote{
We refer to these arcs $γ_a$ as crossing arcs
even though they are not indexed by crossings,
because they play a similar role to $γ_c$'s,
and can in fact be thought of as crossing arcs
when $L$ is considered as a link in
the complement of the Hopf link, $S³ \backslash H$.
}
that travels from $a$ to $\north$.
Likewise, for each underpass segment $b$,
let $γ_b$ be a crossing arc that travels from $b$ to $\south$.
For each pair of adjacent overpass segments $a₁,a₂$,
with $a₁$ running over $a₂$ at the crossing $c$,
let $F_{a₁,a₂,c}$ be an embedded ideal triangle in $\TT$
with sides $γ_{a₁},γ_c,γ_{a₂}$
and ideal vertices at $a₁,a₂,\north$
(see \figref{f:square-weave}).
Similarly construct $F_{b₁,b₂,c}$ for underpass segments
$b₁,b₂$, with $b₁$ running under $b₂$ at crossing $c$.
We call $F_{a₁,a₂,c}$ and $F_{b₁,b₂,c}$ \emph{vertical faces}.

For each region $R$ of $D$,
the union of vertical faces $F_{a₁,a₂,c}$ for $a₁,a₂,c$ adjacent to $R$,
together with $F_R$,
cut out a volume $V_R^N$ above $R$.
Since $D$ is cellular, $R$ is a disk, so the interior of $V_R^N$ is a ball.
More specifically, $V_R^N$ is an ideal pyramid over $R$,
with vertical edges given by the crossing arcs $γ_a$ associated to
overpass segments.
Similarly, we get a volume $V_R^S$ below $R$ (connected to $\south$).

We thus have an ideal polyhedral decomposition $τ$ of $\TT - L$,
where the 3-cells are the volumes $V_R,V_R'$,
the 2-cells are regional faces $F_R$
and vertical faces $F_{a₁,a₂,c}$,$F_{b₁,b₂,c}$,
and the 1-cells are crossing arcs $γ_c,γ_a,γ_b$.

To describe the truncation $\ov{τ}$, we need to introduce more notation.
For crossing arcs $γ_c$ corresponding to crossings,
we refer to their peripheral endpoints by $p_c^o$ (near the overpass)
and $p_c^u$ (near the underpass).
For crossing arcs $γ_a$ corresponding to an overpass segment $a$,
we have one peripheral endpoint $p_N^a$ near $\north$
and another one $p_a$ near $L$.
Likewise, for underpass segment $b$, $γ_b$ has peripheral endpoint
$p_S^b$ near $\south$ and another one $p_b$ near $L$.
We have peripheral edges $\vec{ε}_{\vec{e},R}$
which come from the intersection of a regional face $F_R$
with the peripheral torus near an edge $\vec{e}$ of the link diagram,
similar to the case of a truncated Menasco decomposition.
For overpass segment $a₁$ going over $a₂$ at crossing $c$,
the vertical face $F_{a₁,a₂,c}$ gives rise to three new types of
peripheral edges:
\begin{itemize}
\item $\vec{ε}_{a₁,c}^*$ connecting $p_{a₁}$ to $p_c^o$,
\item $\vec{ε}_{a₂,c}^*$ connecting $p_{a₂}$ to $p_c^u$, and
\item $\vec{ε}_{a₁,a₂}^*$ connecting $p_N^{a₁}$ to $p_N^{a₂}$ at $\north$.
\end{itemize}
The superscript $*$ is a placeholder for a disambiguating symbol,
which we address later.
Similarly for underpass segment $b₁$ going under $b₂$ at crossing $c$,
the vertical face $F_{b₁,b₂,c}$ gives rise to three new types of
peripheral edges:
\begin{itemize}
\item $\vec{ε}_{b₁,c}^*$ connecting $p_{b₁}$ to $p_c^u$,
\item $\vec{ε}_{b₂,c}^*$ connecting $p_{b₂}$ to $p_c^o$, and
\item $\vec{ε}_{b₁,b₂}^*$ connecting $p_S^{b₁}$ to $p_S^{b₂}$ at $\south$.
\end{itemize}

Note that there is some ambiguity in the notation $\vec{ε}_{a₁,c}^*$,
as $a₁$ and $c$ are implicated in two vertical faces.
We may choose an orientation on $a₁$
(say from some orientation on $L$ chosen beforehand),
and add an additional superscript,
$\vec{ε}_{\vec{a}₁,c}^l$ or $\vec{ε}_{\vec{a}₁,c}^r$, to indicate
whether the relevant vertical face is to the left or right of $\vec{a}₁$.
In some cases, $a₂$ could loop around and meet $a₁$ at $c$ from either side;
we may write $\vec{ε}_{\vec{a}₂,c}^s$ or $\vec{ε}_{\vec{a}₂,c}^t$
to indicate whether the relevant vertical face is near the
source or target of $\vec{a}₂$ respectively.
For $\vec{ε}_{a₁,a₂}^*$, we may use an orientation on either $a₁$ or $a₂$
with an appropriate superscript for disambiguation.
Of course, the above discussion applies to the underpass counterparts.

The peripheral endpoints $p_N^a$ and
peripheral edges $\vec{ε}_{a₁,a₂}$
on the peripheral torus at $\north$ make up a graph,
which we refer to as the \emph{overpass graph};
likewise, the \emph{underpass graph} is the graph consisting of
vertices $p_S^a$ and edges $ε_{a₁,a₂}$ at $\south$.
The regions of these graphs are in bijection with the regions of $D$.

Note that the overpass graph can be obtained from the link diagram $D$
of $L$ by essentially a quotient operation, as follows.
For each overpass segment $a$ that does not go over any crossings,
we add a vertex to the middle of the edge $e$ of $D$
which $a$ naturally corresponds to, splitting $e$ into two edges.
For each overpass segment $a$ that goes over at least one crossing,
we identify those crossings that $a$ goes over into one vertex
(and remove the edges between those crossings).
The resulting graph has vertices in bijection with overpass segments,
and the degree of a vertex is $2(n_a + 1)$,
where $n_a$ is the number of crossing that $a$ runs over.
It is straightforward to check that
the resulting graph is isomorphic to the graph
$(\{p_a\}, \{ε_{a,a'}\})$ on the peripheral torus of $\north$.
A similar procedure yields the underpass graph.
Note that if $L$ is alternating, then the overpass and underpass graphs
are isomorphic to the link diagram $D$ of $L$.

For a 3-cell $V_R^N$ above a region $R$ of the link diagram $D$,
its truncation gives rise to 2 types of peripheral 2-cells.
The first type comes from truncating the tip of $V_R^N$ (as an ideal pyramid),
which lies on the peripheral torus of $\north$,
which is simply the region of the overpass graph corresponding to $D$.
The second type comes from truncating off a vertex of $V_R^N$ in its base,
which lies on the peripheral torus around an overpass segment $a$
containing an edge $e$ of $R$;
if $c,c'$ are crossings at the ends of $e$,
then we have the peripheral 2-cell bounded by peripheral edges
$ε_{a,c},ε_{a,c'},ε_{cc'}$.

\subsubsection{\textbf{Equations for Thickened Torus Case}}
\label{s:TT-thickened-torus-eqn}
\ \\ \indent
We apply our generalization (\secref{s:TT-general})
to the polyhedral decomposition of $\TT - L$ described above.
In order to make things appear similar to the original TT method,
we use labels $w_*, u_*^*$ instead of $w(γ_*), u(\vec{ε}_*^*)$,
e.g. $u_{\vec{a}₁,c}^l$ instead of $u(\vec{ε}_{\vec{a}₁,c}^l)$.
A \emph{labeling} $Ω = (\{w_-\},\{u_-^-\})$ consists of:
\begin{itemize}
\item a crossing label $w_c$ for each crossing $c$,
\item a crossing label $w_a$ for each overpass segment,
\item a crossing label $w_b$ for each underpass segment,
\item an edge label $u_{\vec{e},R}$ for each oriented edge $e$ of $D$
	adjacent to region $R$ (with $u_{\vec{e},R} = - u_{\cev{e},R}$
	for the same but oppositely oriented edges),
\item two edge labels $u_{\vec{a},c}^l, u_{\vec{a},c}^r$
	for each oriented overpass segment going over a crossing $c$
	(with $u_{\vec{a},c}^r = - u_{\cev{a},c}^l$),
\item two edge labels $u_{\vec{a},c}^s, u_{\vec{a},c}^t$
	for each oriented overpass segment ending at a crossing $c$
	(with $u_{\vec{a},c}^t = - u_{\cev{a},c}^s$),
\item an edge label $u_{a₁,a₂}$ for overpass segments $a₁,a₂$
	where $a₁$ runs over $a₂$
	(we may need to disambiguate with a superscript, as with $\vec{ε}_{a₁,a₂}$),
\item two edge labels $u_{\vec{b},c}^l, u_{\vec{b},c}^r$
	for each oriented underpass segment going under a crossing $c$
	(with $u_{\vec{b},c}^r = - u_{\cev{b},c}^l$),
\item two edge labels $u_{\vec{b},c}^s, u_{\vec{b},c}^t$
	for each oriented underpass segment ending at a crossing $c$
	(with $u_{\vec{b},c}^t = - u_{\cev{b},c}^s$),
\item an edge label $u_{b₁,b₂}$ for underpass segments $b₁,b₂$
	where $b₁$ runs under $b₂$
	(we may need to disambiguate with a superscript, as with $\vec{ε}_{b₁,b₂}$),
\end{itemize}

Fix an orientation on $L$.
We say that a labeling is an \emph{algebraic solution}
if it satisfies the following equations:
\begin{enumerate}
\item \emph{edge equation} for $\vec{e}$:
	if $R$ and $R'$ are the regions of $D$
	to the left and right of $\vec{e}$,
	and $\vec{e}$ agrees with the orientation of $L$,
	then
	\begin{equation}
	\label{e:edge-eqn-torus}
		u_{\vec{e},R'} - u_{\vec{e},R} = \kappa
	\end{equation}
	where $\kappa$ is as in \eqnref{e:edge-eqn-original}.

\item \emph{edge equation} for $\vec{a},R$:
	given an oriented overpass segment $\vec{a}$
	that contains an oriented edge $\vec{e} ∈ ∂R$,
	and $R$ lies to the right of $\vec{e}$,
	if $\vec{e}$ goes from crossing $c$ to $c'$, then
	\begin{equation}
	u_{\vec{a},c}^* + u_{\vec{e},R} = u_{\vec{e},c'}^*
	\end{equation}
	where the superscripts $*$ in $u_{\vec{a},c}^*$ and $u_{\vec{a},c'}^*$
	depend on whether $c$ and $c'$ are endpoints of $\vec{a}$:
	if $c$ is an endpoint of $\vec{a}$, then we put $s$ in the superscript,
	otherwise we put $l$,
	and similarly, if $c'$ is an endpoint of $\vec{a}$,
	then we put $t$ in the superscript, otherwise we put $l$.

\item \emph{normalization}:
  if $\vec{e}₁,\ldots,\vec{e}_k$ is a sequence of peripheral 1-cells around
	$\north$ (resp. $\south$),
	such that their concatenation is homotopic to $μ_N$ (resp. $μ_S$),
	then $Σ u_{e_j} = 1$.

\item \emph{region equation} for $R$:
	if the boundary of $R$ is $\vec{e}₁ \vec{e}₂ \cdots \vec{e}_n$,
	with $\vec{e}_i$ going from crossing $c_{i-1}$ to $c_i$,
	then
	\begin{equation}
	\label{e:region-eqn-torus}
	\matw{w_{c_n}}
	\matu{-u_{\vec{e}_n,R}}
	\;
	\cdots
	\;
	\matw{w_{c₁}}
	\matu{-u_{\vec{e}₁,R}}
	\;
	\sim
	\;
	\matu{0} \; ,
	\end{equation}
\item \emph{vertical region equation} for adjacent overpass segments $a₁,a₂$:
	if $\vec{a}₁$ runs over $\vec{a}₂$ at crossing $c$, then
	\begin{equation}
	w_{a₁} = u_{a₁a₂}^* u_{\vec{a}₁c}^*
	\; ; \;
	w_{a₂} = -u_{a₁a₂}^* u_{\vec{a}₂c}^*
	\; ; \;
	w_c = -u_{\vec{a}₁c}^* u_{\vec{a}₂c}^*
	\end{equation}
\end{enumerate}

\begin{remark}
We can approximately halve the number of edge labels by only considering
oriented edges/segments that agree with the orientation on $L$.
In this case, we must add minus signs to edge labels
in the equations above.
\end{remark}

\begin{remark}
The crossing arcs between the ideal vertices $\north$/$\south$ of the torihedra
and overpass/underpass segments are automatically geodesic-like
since their endpoints are at different ends,
hence in order to know that a geometric labeling exists,
it is sufficient to check whether the crossing arcs $γ_c$ are geodesic-like.
\end{remark}



\ \\
\subsection{Fully Augmented Links in $S³$}
\label{s:TT-FAL-S3}
\ \\ \indent
As noted in the beginning of this section,
the TT method for FALs in $S³$ has already been considered by Flint \cite{rochyTT}.
As far as we can tell, Flint arrives at the adaptation of the TT method to FALs
by observing the properties of the geometric labeling for FAL diagrams
and adding some crossing arcs until the method works.
We arrive at the same TT method adaptation for FALs,
in a manner that is, in our view, more systematic.

Oberve that in a FAL diagram (\defref{def:falinT2}),
if there are no half-twists, then every crossing is between an augmentation circle
and a component of $L$ that is not an augmentation circle.
In particular, every crossing arc is geodesic-like,
and hence the TT method can be applied without modification.
However, the connection between algebraic solutions and circle packings
is not so clear, hence we develop an alternative approach.
We apply the generalized TT method of \secref{s:TT-general}
to the polyhedral decomposition of the complement of FALs in $S³$
described in \defref{d:decomp-S3}, 
thus recovering the adaptation of the TT method to FALs in \cite{rochyTT}.

Let $L$ be a hyperbolic FAL in $S³$.
We need to verify that all 1-cells in the polyhedral decomposition
of $S³ - L$ (from \defref{d:decomp-S3}) are geodesic-like.
Recall that the 1-cells come in sets of three,
one set for each augmentation circle $C_i$.
For a given $C_i$, they cut a spanning (twice-punctured) disk into
two ideal triangles; it follows from standard results about FALs,
in particular the correspondence between FAL geometry and circle packings
(e.g. \prpref{p:geometric-univalence}),
that the ideal triangles lift to ideal triangles in $\HH$
with distinct vertices in $∂\HH$.
In particular, the 1-cells are geodesic-like,
hence we may apply the TT method.

As in \secref{s:TT-thickened-torus}, we would need to consider
the truncated version of this polyhedral decomposition.
Let us recall some notation from \secref{s:FAL}.
Let $D_L$ be a FAL diagram of $L$.
For the sake of uniformity when assigning labels,
if there are half-twists, we demand that the half-twist
appear as in \figref{f:notation-spanning-face} (see caption),
which can be accomplished by either moving the half-twist through
the augmentation circle, or adding a full-twist so that the half-twist
switches chirality, or by performing both.
Let $B_L$ be the bow-tie graph.
Let $\{C_i\}$ be the collection of augmentation circles of $L$,
with spanning (twice-punctured) disks $F_{C_i}$.
Let $\{a_j\}$ be the collection of segments of $L$ demarcated by spanning disks.
The spanning disks $F_{C_i}$ intersect the projection plane in three arcs
$γ_i¹,γ_i⁰,γ_i²$, splitting $F_{C_i}$ into two ideal triangles $F_{C_i}^{±}$.
Arbitrarily orient each $C_i$; then we choose $γ_i¹$ to be the arc
such that $C_i$ is oriented upwards at the endpoint of $γ_i¹$
(see \figref{f:notation-spanning-face}).

The truncation of the spanning face $F_{C_i}^{±}$ has six sides,
with three peripheral edges denoted $e_{i,±}²,e_{i,±}⁰,e_{i,±}¹$
opposite the three crossing arcs $γ_i¹,γ_i⁰,γ_i²$ respectively.
The (non-truncated) regional face $F_R$ has boundary consisting of
a sequence of crossing arcs such that an arc of type $γ_i¹$ (resp. $γ_i²$)
is always followed by $γ_i¹$ (resp. $γ_i¹$) again if $C_i$ has no half-twist
or $γ_i²$ (resp. $γ_i¹$) if $C_i$ has a half-twist.
Truncating $F_R$ adds peripheral edges to the boundary,
one between each pair of consecutive crossing arcs.
There are two types of peripheral edges: those that go between crossing arcs
at the same augmentation circle,
and those that go between crossing arcs at different augmentation circles.
The former type are denoted by $e_i^{μ1}$ or $e_i^{μ2}$,
where $i$ indicates the relevant augmentation circle $C_i$
(see \figref{f:notation-spanning-face}),
and the latter type are denoted by $e_j^l$ or $e_j^r$,
where $j$ indicates the relevant link segment $a_j$.






%

\subsubsection{\textbf{Equations for FALs in $S³$}}
\label{s:TT-FAL-S3-eqn}
\ \\ \indent
Let $L$ be a FAL. Fix an orientation on $L$.
Here we only explicitly gives edge labels to one of the orientations
on a peripheral edge, usually the one that roughly agrees with the
orientation on $L$.
As before, $i$ indexes the augmentation circles
and $j$ indexes the segments on $L$ demarcated by spanning disks of
augmentation circles.

A \emph{labeling} $Ω = (w(-), u(-))$ consists of:
\begin{itemize}
\item three crossing labels $w(γ_i⁰),w(γ_i¹),w(γ_i²)$ for each $i$,
\item six edge labels $u(e_{i,±}⁰),u(e_{i,±}¹),u(e_{i,±}²)$ for each $i$,
\item two edge labels $u(e_i^{μ1}),u(e_i^{μ2})$ for each $i$,
\item two edge labels $u(e_j^l),u(e_j^r)$ for each $j$.
\end{itemize}
We also write $w_i¹ = w(γ_i¹)$, $u(e_j^l) = u_j^l$ etc.,
where the labels correspond to the crossing arc/peripheral edge
with the same super-/subscripts as in \secref{s:TT-thickened-torus-eqn}.

We say that a labeling is an \emph{algebraic solution}
if it satisfies the following equations:

\begin{enumerate}
\item \emph{edge equations for meridians around non-augmentation circle components}:
	\begin{equation}
	\label{e:edge-eqn-meridian-non-aug}
	u_{i,+}¹ + u_{i,-}¹ = 1 = u_{i,+}² + u_{i,-}²
	\end{equation}
\item \emph{edge equations for meridians around $C_i$}:
	if $C_i$ has no half-twist,
	\begin{equation}
	u_i^{μ1} = u_i^{μ2} = 1
	\end{equation}
	and if $C_i$ has a half-twist,
	\begin{equation}
	\label{e:edge-aug-half-twist}
	u_i^{μ1} + u_{i,-}⁰ = 1 = u_i^{μ2} + u_{i,+}⁰
	\end{equation}
\item \emph{edge equations along $a_j$}:
	\begin{equation}
	u_j^l + u_{i,±}^* - u_j^r - u_{i,±}^* = 0
	\end{equation}
	where $* = 1,2$,
\item \emph{region equation for $F_R$}:
	if the boundary of the truncated $F_R$ is $e₁ γ₁\cdots e_n γ_n$,
	where each $γ_l = γ_i^k$ for some $i$ and $k = 0,1,2$
	and each $e_l$ is the peripheral edge between $γ_{l-1}$ and $γ_l$, then
	\begin{equation}
	\label{e:region-eqn-torus}
	\matw{w(γ_n)}
	\matu{-χ_n u(e_n)}
	\;
	\cdots
	\;
	\matw{w(γ₁)}
	\matu{-χ₁ u(e₁)}
	\;
	\sim
	\;
	\matu{0} \; ,
	\end{equation}
	where $χ_l = +1$ if the orientation on $e_l$ agrees with $∂F_R$,
	and $χ_l = -1$ otherwise,
\item \emph{spanning disk region equations for $C_i$}:
	\begin{equation}
	\label{e:spanning-disk-eqn}
	w_i⁰ = - χ_i¹χ_i² u_{i,±}² u_{i,±}¹
	\; ; \;
	w_i¹ = - χ_i¹ u_{i,±}¹ u_{i,±}⁰
	\; ; \;
	w_i² = - χ_i² u_{i,±}⁰ u_{i,±}²
	\ .
	\end{equation}
	where $χ_i¹ = ±1$, with $χ_i¹ = 1$ if $e_{i,+}¹$ is oriented
	from $γ_i⁰$ to $γ_i¹$, and $χ_i¹ = -1$ otherwise,
	and $χ_i² = ±1$, with $χ_i² = 1$ if $e_{i,+}²$ is oriented
	from $γ_i²$ to $γ_i⁰$, and $χ_i² = -1$ otherwise
	(see \figref{f:notation-spanning-face}).
\end{enumerate}

The requirement that $D_L$ is of a standard form near half-twists
as in \figref{f:notation-spanning-face} is used to give
\eqnref{e:edge-aug-half-twist};
other relative configurations between a half-twist and its augmentation circle
would lead to variations of \eqnref{e:edge-aug-half-twist}.


\subsubsection{\textbf{Additional Equations for FALs in $S³$}}
\label{s:TT-FAL-S3-eqn-simplified}
\ \\ \indent
Recall from \rmkref{r:FAL-half-twist-symmetry}
that the complement of a FAL has the symmetry
of a reflection in the projection plane
(with additional full-twists operations if there are half-twists).
This means that the geometric labeling must satisfy additional equations.
In the following we keep the notation from \secref{s:TT-FAL-S3-eqn}.

\begin{lemma}
\label{l:FAL-S3-pure-imag}
Let $L$ be a FAL in $S³$.
The geometric labeling $\geomlabel = (\geomw{}(-), \geomu{}(-))$
satisfies the following equations:
\begin{enumerate}[label=(\arabic*)]
\item \label{i:c1} $\geomu{}(e_{i,+}¹) = \geomu{}(e_{i,-}¹)
			= \geomu{}(e_{i,+}²) = \geomu{}(e_{i,-}²) = 1/2$,
\item \label{i:c1p} $\geomw{}(γ_i¹) = -\frac{χ_i¹}{2} \geomu{}(e_{i,+}⁰)$,
	\; $\geomw{}(γ_i²) = -\frac{χ_i²}{2} \geomu{}(e_{i,+}⁰)$
	\; (where $χ_i¹,χ_i² = ±1$ from \eqnref{e:spanning-disk-eqn}),
\item \label{i:c2} $\geomu{}(e_j^l) = \geomu{}(e_j^r) ∈ i\RR$,
\item \label{i:c3} $\geomw{}(γ_i⁰) ∈ \RR$,
\end{enumerate}
and for augmentation circle $C_i$ with no half-twist,
\begin{enumerate}[label=(\arabic*)]
\setcounter{enumi}{4}
\item \label{i:c5} $\geomu{}(e_{i,+}⁰) = \geomu{}(e_{i,-}⁰) ∈ i\RR$,
\end{enumerate}
while for augmentation circle $C_i$ with half-twist,
with $λ_i = 1 - \geomu{}(e_{i,+}⁰) - \geomu{}(e_{i,-}⁰)$,
\begin{enumerate}[label=(\arabic*')]
\setcounter{enumi}{4}
\item \label{i:c5h} $\geomu{}(e_{i,+}⁰) = \geomu{}(e_{i,-}⁰) ∈ i\sqrt{λ_i}\RR$,
\item \label{i:c6h} $\geomu{}(e_i^{μ1}) = \geomu{}(e_i^{μ2}) ∈ \sqrt{λ_i}\RR$,
\item \label{i:c7h} $|λ_i| = 1$,
\item \label{i:c8h} $\geomu{}(e_i^{μ1}) ≠ 0$.
\end{enumerate}
\end{lemma}
\begin{proof}
The orientation-reversing symmetry from \rmkref{r:FAL-half-twist-symmetry}
gives an isometry from $S³ - L$ to itself
which preserves the polyhedral decomposition; more precisely,
the homeomorphism preserves all internal and peripheral 1-cells,
except the peripheral 1-cells of the form $e_{i,±}^k$,
which get sent to $-e_{i,∓}^k$,
(where the minus sign indicates opposite orientation;
note also that the $±$ subscript switches to $∓$).
The symmetry sends a meridian around a non-augmentation circle component $l$ of $L$
to itself but with opposite orientation,
so $λ_l = -1$ for such a component $l$.
For an augmentation circle $C_i$, the meridian is preserved
if there is no half-twist at $C_i$, i.e. $λ_i = 1$,
while the meridian is sent to ``itself minus a longitude''
if there is a half-twist at $C_i$, i.e.
$λ_i = 1 - \geomu{}(e_{i,+}⁰) - \geomu{}(e_{i,-}⁰)$
(the longitude in question is the curve giving the blackboard framing
in the FAL diagram, i.e. $e_{i,+}⁰e_{i,-}⁰$);
note that since $λ_i$ is the image of 1 under an isometry of $S³ - L$,
we have \ref{i:c7h}.
Thus, by \prpref{p:labels-isom}, we have:
\begin{enumerate}[label=(\alph*)]
\item \label{i:a} $\geomw{}(γ_i⁰) = \ov{\geomw{}(γ_i⁰)}$
	(both endpoints of $γ_i⁰$ are on non-augmentation circle components,
	so the two $λ_l = -1$ cancel out)
\item \label{i:b} $\geomw{}(γ_i¹) = -λ_i \ov{\geomw{}(γ_i¹)}, \;\;
	\geomw{}(γ_i²) = - λ_i \ov{\geomw{}(γ_i²)}$
\item \label{i:c} $\geomu{}(e_{i,±}¹) = \ov{\geomu{}(e_{i,∓}¹)}$, \;\;
	$\geomu{}(e_{i,±}²) = \ov{\geomu{}(e_{i,∓}²)}$
	(the $λ_l = -1$ cancels out the orientation-reversal)
\item \label{i:d} $\geomu{}(e_{i,±}⁰) = -λ_i \ov{\geomu{}(e_{i,∓}⁰)}$
\item \label{i:e} $\geomu{}(e_i^{μ1}) = λ_i \ov{\geomu{}(e_i^{μ1})}$, \;\;
	$\geomu{}(e_i^{μ2}) = λ_i \ov{\geomu{}(e_i^{μ2})}$,
\item \label{i:f} $\geomu{}(e_j^*) = -\ov{\geomu{}(e_j^*)}$ for $* = l,r$,
\end{enumerate}

From \ref{i:a} we immediately obtain \ref{i:c3}.
\ref{i:c}, together with the edge equation \eqnref{e:edge-eqn-meridian-non-aug},
gives \ref{i:c1}.
\ref{i:c1}, together with the spanning disk edge equations
\eqnref{e:spanning-disk-eqn},
gives $\geomw{}(γ_i¹) = -\frac{χ_i¹}{2} \geomu{}(e_{i,±}⁰)$,
and similarly for $\geomw{}(γ_i²)$, which proves \ref{i:c1p}
and the equality part of \ref{i:c5} and \ref{i:c5h},
i.e. that $\geomu{}(e_{i,+}⁰) = \geomu{}(e_{i,-}⁰)$;
from the edge equations \eqnref{e:edge-aug-half-twist},
we have
$\geomu{}(e_i^{μ1}) = 1 - \geomu{}(e_{i,-}⁰)
= 1 - \geomu{}(e_{i,+}⁰) = \geomu{}(e_i^{μ2})$,
which establishes the equality part of \ref{i:c6h}.
\ref{i:c1} implies the equality in \ref{i:c2},
and \ref{i:f} forces them to be the pure imaginary.
Since we have proven \ref{i:c7h}, or equivalently $\ov{λ_i} = λ_i^\inv$,
we may rewrite \ref{i:d} and \ref{i:e} as
\begin{align*}
σ_i^\inv \geomu{}(e_{i,±}⁰) &= -\ov{σ_i^\inv \geomu{}(e_{i,∓}⁰)}
\\
σ_i^\inv \geomu{}(e_i^{μ1}) = \ov{σ_i^\inv \geomu{}(e_i^{μ1})},
&\;\;
σ_i^\inv \geomu{}(e_i^{μ2}) = \ov{σ_i^\inv \geomu{}(e_i^{μ2})}
\end{align*}
where $σ_i = \sqrt{λ_i}$ is a choice of square root.
Thus, \ref{i:e} implies that 
$σ_i^\inv \geomu{}(e_i^{μ1})$ and $σ_i^\inv \geomu{}(e_i^{μ2})$
are real, which establishes the ``$∈$'' part of \ref{i:c6h},
and \ref{i:d}, together with $\geomu{}(e_{i,+}⁰) = \geomu{}(e_{i,-}⁰)$,
implies that $σ_i^\inv \geomu{}(e_{i,+}⁰)$ is pure imaginary,
which establishes the ``$∈$'' part of \ref{i:c5} and \ref{i:c5h}.

The final criterion \ref{i:c8h}, $\geomu{}(e_i^{μ1}) ≠ 0$,
follows from \rmkref{r:lin-indep-crit} applied to $e_i^{μ2}e_i^{μ1}$:
since $e_i^{μ2}e_i^{μ1}$ is a loop that is not homotopic to
a multiple of the chosen meridian (given by $e_i^{μ2}e_{i,+}⁰$) for $C_i$,
we have $2 \geomu{}(e_i^{μ1}) = \geomu{}(e_i^{μ2}) + \geomu{}(e_i^{μ1}) ∈ \RR$,
in particular $≠0$.
This criterion is the only inequality in the list, and might seem out of place.
We include it to address an apparent asymmetry between FALs with and without
half-twists; we discuss this further in \rmkref{r:half-twist-relation}.
\end{proof}

\begin{remark}
\label{r:half-twist-relation}
Suppose a FAL $L$ has some half-twists, and $L'$ is the FAL obtained from $L$
by removing all half-twists.
The algebraic solutions for $L$ and $L'$ are closely related.
Let $Ω = (w(-),u(-))$ be an algebraic solution for $L$
that satisfies the criteria in \lemref{l:FAL-S3-pure-imag}.
For an augmentation circle $C_i$ with a half-twist,
$u(e_i^{μ1}) ≠ 0$ by \ref{i:c8h}.
We can consider the labeling $Ω' = (w'(-),u'(-))$
which has the same labels as $Ω$ except that for
crossing arcs and peripheral edges that meet augmentation circles $C_i$
that have a half-twist, we divide their label by $u(e_i^{μ1})$;
following Remarks \ref{r:meridian-scaling} and \ref{r:meridian-choice},
we can think of $Ω'$ as an algebraic solution for the TT method
with a different chosen meridian $μ_i' = e_i^{μ2}e_i^{μ1}$
and normalization $u'(μ_i') = 2$.

Since the 1-cells of the polyhedral decomposition for the complements of
$L$ and $L'$ are indexed by the same symbols ($e_i^{μ1}, e_{i,+}⁰$ etc.),
$Ω'$ can also be directly interpreted as a labeling for $L'$,
and furthermore is an algebraic solution for $L'$
that satisfies all the criteria in \lemref{l:FAL-S3-pure-imag}.
Conversely, if we started with an algebraic solution $Ω' = (w'(-),u'(-))$ for $L'$
that satisfies all the criteria in \lemref{l:FAL-S3-pure-imag},
then we can consider the labeling $Ω$ for $L$ with the same labels except that for 
crossing arcs and peripheral edges that meet augmentation circles $C_i$
that have a half-twist, we divide their label by $1 - u'(e_{i,+}⁰)$
($≠ 0$ since $u'(e_{i,+}⁰)$ is pure imaginary).
It is clear that these two operations are inverses of each other.
\end{remark}

\begin{lemma}
\label{l:FAL-S3-ortn}
Let $L$ be a FAL in $S³$. The geometric edge labels satisfy:
\begin{itemize}
\item $\geomu{}(e_{i,+}⁰) / \geomu{}(e_i^{μ1}) ∈ i\RR^{<0}$,
\item $\geomu{}(e_j^l) ∈ i\RR^{<0}$.
\end{itemize}
\end{lemma}

\begin{proof}
By \rmkref{r:half-twist-relation}, we may assume that $L$ has no half-twists,
so $\geomu{}(e_i^{μ1}) = 1$ and the first criterion is simply
$\geomu{}(e_{i,+}⁰) ∈ i\RR^{<0}$.
Choose a vertex $v ∈ B_L$; if $v$ corresponds to some $C_i$,
then we are interested in proving $\geomu{}(e_{i,+}⁰) ∈ i\RR^{<0}$,
and if $v$ corresponds to some $a_j$,
then we are interested in proving $\geomu{}(e_j^l) ∈ i\RR^{<0}$.

Let $P$ be the top polyhedron in the polyhedral decomposition of $S³ - L$.
As discussed in \rmkref{r:FAL-triangulation-geometric},
we can refine the polyhedral decomposition
by choosing a triangulation for each region
(of the bow-tie graph $B_L$), choosing a vertex $c ≠ v$ of $B_L$,
then subdividing $P$ (and the bottom polyhedra)
into tetrahedra consisting of cones from $c$ over triangles not adjacent to $c$.
This triangulation is geometric,
and in particular the tetrahedra in the triangulation have positive volume,
that is, $\vol(ζ_{Δ,e}) = \vol(Ω,Δ) > 0$
(see \prpref{p:repn-vol-sum} and the discussion thereafter),
or equivalently, the shape parameters $ζ_{Δ,e}$ have positive imaginary part.

For a vertex $v$ of $B_L$, let $t₁,...,t_k$ be the triangles meeting $v$
in counterclockwise order.
Let $x₁,...,x_k$ be the neighbors of $v$, with $γ_i = \ov{vx_i}$,
so that $t_i$ has vertices $v, x_i, x_{i+1}$.
Let $l_i = \vec{x_ix_{i+1}}$ be the edge of $t_i$ opposite from $v$,
oriented so that $l₁⋅⋅⋅l_k$ goes counterclockwise around $v$.
Let $T₁,...,T_k$ be the tetrahedra
that are the cones from $c$ over $t₁,...,t_k$ respectively.
Note that if the cone point $c$ is adjacent to $v$,
then all or some of the $T_j$ are degenerate.

Consider the geometric realization $z(-)$ of $\geomlabel$ restricted to $P$,
as in \rmkref{r:geom-realzn-basepoint}.
By applying a global isometry if necessary, we assume that $z(v) = ∞$,
and that the parametrization of $z(v)$ is given by
the horosphere $H₀$ (the horizontal plane at height 1)
with parametrization $ψ₀ : H₀ ≃ \CC$ given by projection.
Let $y_i$ be the intersection between $H₀$ and the geodesic from $z(v)$ to $z(x_i)$.
Let $ε_i = \vec{y_iy_{i+1}}$, the intersection between $H₀$ and ``$z(t_i)$''
(that is, the geodesic ideal triangle spanning $z(v),z(x_i),z(x_{i+1})$).
We may assume that the geodesic between $z(v)$ and $z(c)$
intersects $H₀$ at $0$, again by applying a global isometry if necessary.
Let $\ov{T_i}$ be the intersection of $H₀$ with ``$z(T_i)$''; equivalently
$\ov{T_i}$ is the triangle in $H₀$ with vertices $0, y_i, y_{i+1}$.

In the graph $B_L$ before triangulation, $v$ had 4 edges, say $E₁,E₂,E₃,E₄$
in counterclockwise order.
Let $e_{(j)}$ be the peripheral edge from $E_j$ to $E_{j+1}$.
If $v$ corresponds to an augmentation circle $C_i$,
we relabel the $E_j$'s (if necessary) so that $ε = e_i^{μ1}$;
then $e_{(3)} = -e_i^{μ2}$ and $e_{(2)} = e_{i,+}⁰ = -e_{(4)}$.
If $v$ corresponds to a segment $a_j$ going from $C_i$ to $C_{i'}$,
then we relabel so that $e_{(1)} = e_{i,+}^l$, $l = 1$ or 2;
then $e_{(3)} = e_{i',+}^{l'}$, $l' = 1$ or 2,
and $e_{(2)} = e_j^r, e_{(4)} = -e_j^l$.

Let $a,b$ be the indices such that $ε_a = ε, ε_b = ε'$;
equivalently, $E₁ = γ_a, E₃ = γ_b$.
Then in $H₀$ (or more accurately under $ψ₀$),
$y_{a+1} - y_a = y_b - y_{b+1} = \geomu{}(e_i^{μ1}) = 1$ and
$y_b - y_{a+1} = y_{b+1} - y_a = \geomu{}(e_{i,+}⁰)$
in the $v = C_i$ case, while
$y_{a+1} - y_a = y_b - y_{b+1} = \geomu{}(e_{i,+}^l) = 1/2$ and
$y_b - y_{a+1} = y_{b+1} - y_a = \geomu{}(e_j^l)$
in the $v = a_j$ case.
In both cases, $y_a,y_{a+1},y_b,y_{b+1}$ are the vertices of a rectangle.
Then the lemma is equivalent to proving that this rectanlge is non-degenerate,
and that these 4 vertices, in that order, go countercounterwise around it.

Since the tetrahedra $T_a$ has positive volume,
if $\ov{T_a}$ is non-degenerate (i.e. $c ≠ x_a,x_{a+1}$),
then $0,y_a,y_{a+1}$ go counterclockwise around $\ov{T_a}$.
Likewise for $\ov{T_b}$ and $0,y_b,y_{b+1}$.
In every case, 0 lies to the left of $\overset{\longrightarrow}{y_a y_{a+1}}$
and $\overset{\longrightarrow}{y_b y_{b+1}}$,
which shows that $y_a,y_{a+1},y_b,y_{b+1}$ go counterclockwise around
the rectangle they enclose (if the rectangle is non-degenerate).
Moreover, $\ov{T_a}$ and $\ov{T_b}$ cannot both be degenerate,
so 0 is strictly to the left of one of them.
This proves that the rectangle is non-degenerate, so we are done.
\end{proof}

\begin{lemma}
\label{l:FAL-S3-shape-param}
Let $L$ be a FAL in $S³$.
Let $v₀,v₁,v₂,v₃$ be distinct (ideal) vertices of $F_R$ in cyclic order
(but not necessarily consecutive vertices of $F_R$).
Let $γ',γ,γ''$ be diagonals or edges of $F_R$,
with $∂γ' = \{v₀,v₁\},∂γ = \{v₁,v₂\},∂γ'' = \{v₂,v₃\}$.
Then the shape parameter
$ζ_{γ;γ',γ''} = \geomw{}(γ) / \geomu{ε_{γ γ'}} \geomu{ε_{γ γ''}}$
is real and $0 < ζ_{γ;γ',γ''} < 1$.
\end{lemma}

\begin{proof}
From the top-bottom symmetry,
it follows that $F_R$ can be isotoped to lie in a geodesic plane,
and in particular, that the vertices of $F_R$
lie on a circle in the sphere at infinity
(this can also be shown to be a direct consequence
of the criteria in \lemref{l:FAL-S3-pure-imag},
see \prpref{p:crit-01-circ-packing}).
This implies that the shape parameter must be real.
Moreover, the polygon formed by $R$ must be a convex polygon.
Thus, if $v₀,v₁,v₂$ are placed that $1,∞,0$,
then $v₃$ is placed at the shape parameter $ζ$,
and then convexity of the polygon formed by $F_R$ implies that $0 < ζ < 1$.
\end{proof}

\begin{remark}
The three lemmas above describe properties of the geometric labeling.
The main reason we separate them out into three distinct lemmas
is that they play different roles in \secref{s:FAL-circ-packing}, 
where we discuss the relation of algebraic solutions to circle packings.
More precisely, we treat the properties as criteria on an algebraic solution,
so if an algebraic solution satisfies the criteria in
\lemref{l:FAL-S3-pure-imag}, then we can construct a circle packing,
and if it furthermore satisfies the criteria in Lemma
\ref{l:FAL-S3-ortn} or \ref{l:FAL-S3-shape-param},
then the corresponding circle packing satisfies additional properties.

%
Now the criteria in \lemref{l:FAL-S3-shape-param}
do not seem applicable to an arbitrary algebraic solution,
since the polyhedral decomposition does not necessarily have
the relevant 1-cells ($γ,ε_{γ γ'}$, and $ε_{γ γ''}$).
We can use the extension discussed in \secref{s:extension-adm},
which can be made quite concrete in this situation.
For example, if there is only one vertex $v$ between $v₁$ and $v₂$,
then we can use the triangle region equations \eqnref{e:region-eqn-k3}
to obtain $w(γ)$. More generally, we triangulate $F_R$
by adding diagonals, among which are $γ,γ',γ''$,
and repeatedly apply \eqnref{e:region-eqn-k3} to get the desired labels.
\end{remark}


\ \\
\subsection{FALs in $\TT$}
\label{s:TT-FAL-T2}
\ \\ \indent
Similar to \secref{s:TT-thickened-torus},
the TT method for FALs in $\TT$ is essentially obtained from
adding ``vertical labels'' to the $S³$ case.
We describe these labels, referring the reader to \secref{s:TT-FAL-S3}
for the ones also found in the $S³$ case.
We label the crossing arcs connecting $C_i$ to $\north$ and $\south$
by $γ_i^N$ and $γ_i^S$ respectively.
We label the crossing arcs connecting $a_j$ to $\north$ and $\south$
by $γ_j^N$ and $γ_j^S$ respectively.
We label the peripheral edges around the vertical face $F_{C_i,a_j}^N$
(the copy of $F_{C_i,a_j}$ in the top toriherdron)
by $ε_{i,j,N}^k$ for $k = 0,1,2$,
with $k = 0$ at $\north$, $k = 1$ at $C_i$, and $k = 2$ at $a_j$;
we orient $ε_{i,j,N}⁰$ from $γ_j^N$ to $γ_i^N$, and orient
$ε_{i,j,N}¹$ and $ε_{i,j,N}²$ following $C_i$ and $a_j$ respectively.
Likewise we label the peripheral edges of $F_{C_i,a_j}^S$ by $ε_{i,j,S}^k$.
We label the peripheral edges around the vertical face
$F_{a_j,a_{j'};C_i}^N$ (the copy of $F_{a_j,a_{j'};C_i}$ in the top toriherdron)
by $ε_{j,j',i,N}^k$ for $k = 0,1,2$,
with $k = 0$ at $\north$, $k = 1$ at $a_j$, and $k = 2$ at $a_j'$;
we orient $ε_{j,j',i,N}⁰$ from $γ_j^N$ to $γ_{j'}^N$,
and orient $ε_{j,j',i,N}¹$, $ε_{j,j',i,N}²$ following $a_j, a_{j'}$ respectively.
Likewise we label the peripheral edges of $F_{a_j,a_{j'};C_i}^S$ by $ε_{j,j',i,S}^k$.
We leave it as an exercise to the reader to produce the appropriate equations
for the TT method based on \secref{s:TT-general}
as we did for the $S³$ case in \secref{s:TT-FAL-S3-eqn}.


\subsubsection{\textbf{Additional Equations for FALs in $\TT$}}
\label{s:TT-FAL-T2-eqn-simplified}
\ \\ \indent

Similar to \secref{s:TT-FAL-T2-eqn-simplified},
the reflection symmetry swapping top and bottom torihedra
(\rmkref{r:FAL-half-twist-symmetry})
implies that the geometric labeling must satisfy additional equations.
As in \secref{s:TT-thickened-torus}, we need to choose meridians
$μ_N, μ_S$ for $\north, \south$, the ideal vertices of $\TT$;
to exploit the symmetry, it is convenient to choose them such that
they are sent to each other under the reflection symmetry.

\begin{lemma}
\label{l:FAL-T2-pure-imag}
Let $L$ be a FAL in $\TT$.
Assume that the meridians $μ_N,μ_S$ of the ideal vertices
are sent to each other under the reflection symmetry.
Then the geometric labeling $\geomlabel = (\geomw{}(-), \geomu{}(-))$
satisfies the equations in \lemref{l:FAL-S3-pure-imag},
as well as the following additional equations:
\begin{enumerate}[label=(\arabic*)]
\setcounter{enumi}{8}
\item \label{i:c9-0}
	$\geomu{}(ε_{j,j',i;S}⁰) = \ov{\geomu{}(ε_{j,j',i;N}⁰)}$,
\item \label{i:c9-12}
	$\geomu{}(ε_{j,j',i;S}^k) = -\ov{\geomu{}(ε_{j,j',i;N}^k)}$ for $k = 1,2$,
\item \label{i:c10-0}
	$\geomu{}(ε_{i,j,S}⁰) = \ov{\geomu{}(ε_{i,j,N}⁰)}$
\item \label{i:c10-2}
	$\geomu{}(ε_{i,j,S}²) = -\ov{\geomu{}(ε_{i,j,N}²)}$
\end{enumerate}
and for augmentation circle $C_i$ with no half-twist,
\begin{enumerate}[label=(\arabic*)]
\setcounter{enumi}{10}
\item \label{i:c11}
	$\geomu{}(ε_{i,j,S}¹) = \ov{\geomu{}(ε_{i,j,N}¹)}$,
\end{enumerate}
while for augmentation circle $C_i$ with half-twist,
with $λ_i = 1 - \geomu{}(e_{i,+}⁰) - \geomu{}(e_{i,-}⁰)$,
\begin{enumerate}[label=(\arabic*')]
\setcounter{enumi}{10}
\item \label{i:c11h}
	$\geomu{}(ε_{i,j,S}¹) = λ_i \ov{\geomu{}(ε_{i,j,S}¹)}$.
\end{enumerate}
\end{lemma}
\begin{proof}
The proof of \lemref{l:FAL-S3-pure-imag} works verbatim here,
so $\geomlabel$ satisfies those equations.
In each of the above equations, the peripheral edges on the left-hand side
is the image of the right-hand side under the reflection symmetry,
and the equations are direct consequences of \prpref{p:labels-isom},
each with different $λ_i$ (here we mean $λ_i$ is as in \prpref{p:labels-isom},
the image of the meridian under the symmetry).
For the equations involving peripheral edges around $\north$ and $\south$,
i.e. \eqnref{i:c9-0} and \eqnref{i:c10-0},
since we chose meridians that also map to each other, $λ_i = 1$.
The meridians around $a_j$'s get reversed in orientation,
so for \eqnref{i:c9-12} and \eqnref{i:c10-2}, we have $λ_i = -1$.
Finally the meridian around $C_i$ is fixed in the case with no half-twist,
i.e. $λ_i = 1$, while in the half-twist case,
the meridian is sent to $1 - \geomu{}(e_{i,+}⁰) - \geomu{}(e_{i,-}⁰)$
(i.e. the $λ_i$ as defined in the lemma).
\end{proof}

\begin{lemma}
\label{l:FAL-T2-ortn}
Let $L$ be a FAL in $\TT$,
with the same setup as \lemref{l:FAL-T2-pure-imag}. Then
\begin{itemize}
\item $\geomu{}(e_{i,+}⁰) / \geomu{}(e_i^{μ1}) ∈ i\RR^{<0}$,
\item $\geomu{}(e_j^l) ∈ i\RR^{<0}$.
\end{itemize}
\end{lemma}

\begin{proof}
Similar to the proof of \lemref{l:FAL-S3-ortn},
we use the fact that the conical triangulation of the torihedra
(based on any triangulation of the graphs) is geometric
(see \rmkref{r:FAL-triangulation-geometric}).
\end{proof}

\begin{lemma}
\label{l:FAL-T2-shape-param}
Let $L$ be a FAL in $\TT$,
with the same setup as \lemref{l:FAL-T2-pure-imag}.
Let $v₀,v₁,v₂,v₃$ be distinct (ideal) vertices of $F_R$ in cyclic order
(but not necessarily consecutive vertices of $F_R$).
Let $γ',γ,γ''$ be diagonals or edges of $F_R$,
with $∂γ' = \{v₀,v₁\},∂γ = \{v₁,v₂\},∂γ'' = \{v₂,v₃\}$.
Then the shape parameter
$ζ_{γ;γ',γ''} = \geomw{}(γ) / \geomu{ε_{γ γ'}} \geomu{ε_{γ γ''}}$
is real and $0 < ζ_{γ;γ',γ''} < 1$.
\end{lemma}

\begin{proof}
Exactly the same as \lemref{l:FAL-S3-shape-param}.
\end{proof}


\begin{remark}
\label{r:cusp-shapes}
Given a geometric solution $\Omega = (w(-),u(-))$ to the TT method,
it is easy to obtain the cusp shapes of the peripheral tori.
For a peripheral torus $T$, which has a chosen meridian,
consider a sequence of peripheral edges $ε₁,\ldots,ε_k$
whose concatenation forms a closed loop
that is homotopic to a longitude of the peripheral torus.
Then the sum $\sum_{i=1}^k u(ε_i)$ is the cusp shape of $T$.


\end{remark}

\ \\
\subsection{Algebraic Solutions for FALs and Circle Packings}
\label{s:FAL-circ-packing}
\ \\ \indent
As we saw in \secref{s:FAL-circ}, FALs and circle packings are closely related.
More precisely, by \prpref{p:geometric-univalence},
every FAL $L$ in $S³$ (resp. $\TT$) is associated with a univalent circle packing
realizing its region graph $Γ_L$ in $S²$ (resp. $T²$)
that is unique up to conformal maps.
When we drop the univalence condition, we obtain possibly more circle packings
realizing $Γ_L$.
In fact, we will see that they are in 1-to-1 correspondence
with the set of algebraic solutions to the TT method on $L$
that satisfy the additional criteria of \lemref{l:FAL-S3-pure-imag}.
Using this correspondence, we prove the other main theorem of this paper,
which says that the criteria in the other two lemmas,
Lemmas \ref{l:FAL-S3-ortn} and \ref{l:FAL-S3-shape-param},
in addition to those of \lemref{l:FAL-S3-pure-imag},
are sufficient to ensure that the algebraic solution is the geometric labeling.

Let us collect some notation from the previous sections.
First consider the case of a FAL $L$ in $S³$.
As in \secref{s:TT-FAL-S3}, we consider the polyhedral decomposition $τ$ of
$M = S³ - L$  from \defref{d:decomp-S3} and apply the TT method to it.
Fix a basepoint $m₀$ for $M$, and let $\wdtld{M}$ be the universal cover of $M$.
Let $\wdht{M},\hattld{M}$ be the completions of $M,\wdtld{M}$ respectively
by adding ideal points.
Recall from \secref{s:alg-to-geom-reconstruction} that a geometric realization
of an algebraic solution $Ω$ is an assignment $z(-)$ of a parametrized point
$z(\wdtld{x}) ∈ ∂\HH$ for each ideal point $\wdtld{x} ∈ \hattld{M}$,
and $z(-)$ is $π₁(M,m₀)$-equivariant.
Any two geometric realizations are related by an isometry of $\HH$.

Let $P$ be the top polyhedron (above the projection plane) of $τ$.
Recall that the graph on the boundary of $P$ is the bow-tie graph $B_L$.
Let $\wdtld{P}$ be a lift of $P$ in $\wdtld{M}$.
By restricting the geometric realization $z(-)$,
we have an assignment of parametrized points to the vertices of $\wdtld{P}$.
From \rmkref{r:geom-realzn-basepoint}, we may refer to this assignment
as a geometric realization of $Ω$ restricted to $P$,
or more appropriately for this context,
a \emph{geometric realization of $Ω$ on $B_L$}.

The case of a FAL $L$ in $\TT$ is almost the same, with $M = \TT - L$,
except instead of $P$, we consider the top torihedron $\cT$
with its canonical polyhedral decomposition $τ$ from \defref{d:torihedra-conical},
and instead of a lift of $\cT$ in $\wdtld{M}$, we consider copy
$\wdtld{\cT} ⊆ \wdtld{M}$ of the universal cover of $\cT$.
Then as above, we restrict $z(-)$ to $\wdtld{\cT}$,
obtaining a geometric realization of $\wdtld{B_L}$ (universal cover of $B_L$),
which is $\ZZ ⊕ \ZZ$-equivariant.
Applying an isometry of $\HH$ if necessary, we may assume that
the vertex of $\wdtld{\cT}$ (technically the lift of the vertex of $\cT$)
is mapped to $∞ ∈ ∂\HH$.
The $\ZZ ⊕ \ZZ$-action acts by translations,
since it preserves the geometric edge labels around $∞$.

\begin{proposition}
\label{p:crit-01-circ-packing}
Let $L$ be a FAL in $S³$ (resp. $\TT$).
Let $Ω$ be an algebraic solution to the TT method on $L$.
If $Ω$ satisfies the criteria in \lemref{l:FAL-S3-pure-imag}
(resp. \lemref{l:FAL-T2-pure-imag}),
then any geometric realization $z(-)$ of $Ω$ on $B_L$ defines a circle packing
in $S²$ (resp. $T²$) realizing the region graph $Γ_L$, that is,
for every regional (i.e. non-bow-tie) face $R$ of $B_L$,
the vertices of $R$ are assigned by $z(-)$ to points on a circle,
and two circles are tangent when their corresponding regions are adjacent.
\end{proposition}

\begin{proof}
\textbf{Case $L ⊆ S³$}:
We first assume that $L$ has no half-twists.
Let $Ω$ be an algebraic solution that satisfies the criteria in
\lemref{l:FAL-S3-pure-imag}, so in particular,
all labels are either real or pure imaginary.
Following \rmkref{r:meridian-scaling}, consider the modified TT method
where the normalization on the ideal points corresponding to segments of $L$
(i.e. not corresponding to augmentation circles)
is ``meridian $= i$'' instead of 1.
Then the algebraic solution $Ω' = (w'(-),u'(-))$ to this modified TT method
obtained from rescaling $Ω$ satisfies:

\begin{enumerate}[label=(\arabic*)]
\item \label{i:cc1} $u'(e_{i,+}¹) = u'(e_{i,-}¹)
			= u'(e_{i,+}²) = u'(e_{i,-}²) = i/2$,
\item \label{i:cc1p} $w'(γ_i¹) = ±\frac{i}{2} u'(e_{i,+}⁰)$,
	\; $w'(γ_i²) = ±\frac{i}{2} u'(e_{i,+}⁰) ∈ \RR$,
\item \label{i:cc2} $u'(e_j^l) = u'(e_j^r) ∈ \RR$,
\item \label{i:cc3} $w'(γ_i⁰) ∈ \RR$,
\item \label{i:cc5} $u'(e_{i,+}⁰) = u'(e_{i,-}⁰) ∈ i\RR$,
\item \label{i:cc6} $u'(e_i^{μ1}) = u'(e_i^{μ2}) = 1$.
\end{enumerate}
In particular, all the (modified) crossing labels are real,
and all the (modified) edge labels for peripheral edges of type $e_i^{μ*},e_j^*$,
i.e. those in the boundary of regional faces, are real as well.
As discussed in \rmkref{r:meridian-scaling-geom-realz},
the geometry, i.e. relative positions of the images of ideal points
under a geometric realization, is not affected by changing normalizations,
so we can draw conclusions directly from $Ω'$ without having to translate back
to $Ω$ first.

Let $x₁,...,x_n$ be the vertices of $R$ in some cyclic order.
The geometric realization of $Ω'$ on $B_L$
assigns parametrized points $z'(x_k) ∈ ∂\HH$, $k = 1,...,n$.
Note that $z'(-)$ is almost the same as a geometric realization $z$ of $Ω$,
except that for vertices $x_k$ corresponding to segments of $L$,
the parametrization of the horosphere is scaled by $i$;
when the parametrization does not matter, we may write $z$ instead of $z'$.
Since the geometric labels among the $z'(x_k)$'s are all real,
they must lie on a circle.
(This is most clear when they lie on the extended real line $\RR ∪ \{∞\} ⊆ ∂\HH$.)

We denote the circle that passes through all the $z(x_k)$'s by $X_R$;
this implicitly assumes that such a circle is unique,
which we have not proved.
Such non-uniqueness can only happen if the edge labels around $R$ are all 0,
so that the $z(x_k)$'s alternate between two points,
i.e. $z(x₁) = z(x₃) = ... = z(x_{n-1})$ and $z(x₂) = z(x₄) = ... = z(x_n)$.
(Note that adding the additional criteria
from either \lemref{l:FAL-S3-ortn} or \lemref{l:FAL-S3-shape-param}
guarantees that all edge labels are nonzero,
so we would not be in this situation.)

To overcome this, we also need to study the circles circumscribing the bow-tie faces.
Before we continue with the argument,
we point out that while it may be a little long and pedantic,
especially since it is meant to prove an ostensibly improbably edge case,
the argument is not unnatural for someone familiar with FALs:
when we add these circles to the circle packing
from \prpref{p:geometric-univalence}, we get a
\emph{right-angled circle pattern} (see e.g. \cite{purcell}, \cite{kwon2020}).

Let $F = F_{C_i}^{±}$ be one of the bow-tie faces associated to
augmentation circle $C_i$.
The images of the vertices of $F$ under $z(-)$ must be distinct
(since they are connected by internal 1-cells of the polyhedral decomposition),
so there is a unique circle $X_F$ passing through them.
Let $F' ≠ F$ be a bow-tie face that shares a vertex $x$ with $F$.
Let $y₁,y₂,y₃,y₄$ be the vertices adjacent to $x$,
with $y₁,y₂$ in $F$ and $y₃,y₄$ in $F'$.
Let $R$ (resp. $R'$) be the regions meeting $x$ that also meets
$y₂,y₃$ (resp. $y₄,y₁$).
Applying a global isometry if necessary, assume that $z(x) = ∞$,
so that $X_F$ and $X_{F'}$ are straight lines.
Suppose $x$ corresponds to a non-augmentation circle segment $a_j ⊆ L$.
Then by \ref{i:cc1} and \ref{i:cc3}
$z(y₁),z(y₂),z(y₃),z(y₄)$ form a rectangle;
if $x$ instead corresponds to an augmentation circle $C_i ⊆ L$,
then we use \ref{i:cc5} and \ref{i:cc6}.
In particular, $X_F$ and $X_{F'}$ are parallel lines.
If we further have $z(y₂) ≠ z(y₃)$,
then $X_R$ is a straight line perpendicular to $X_F$ and $X_{F'}$.
Moreover, if $z(y₂) = z(y₃)$ but $X_R$ is still well-defined in that
$z(x') ≠ z(y₂),∞$ for some vertex $x'$ of $R$,
we must still have $X_R$ perpendicular to $X_F$ and $X_{F'}$,
since the (modified) edge label for the (added) diagonal from $y₂$ to $x'$
is real.

Thus, it makes sense to take $X_R$ to be the straight line
through $z(y₂)$ perpendicular to $X_F$ when there is no unique choice for $X_R$.
To justify this, we need to check that this gives the same circle $X_R$
regardless of choice of $x$ on $R$.
Let $x₁,...,x_n$ be the vertices of $R$ as before,
and let $F_k$ be the bow-tie face adjacent to $R$ and sharing the vertices
$x_k$ and $x_{k+1}$ with $R$.
The circle $X_{F₁}$ passes through $z(x₁)$ and $z(x₂)$.
The circle $X_{F₂}$ passes through $z(x₂)$ and $z(x₃)$,
and is tangent to $X_{F₁}$.
Recall that we must have the $z(x_k)$'s alternate between two points,
in particular $z(x₃) = z(x₁)$, so in fact we must have $X_{F₂} = X_{F₁}$.
Similarly, the circle $X_{F₃}$ passes through $z(x₃)$ and $z(x₄)$,
and is tangent to $X_{F₂}$, and since $z(x₄) = z(x₂)$,
we must have $X_{F₃} = X_{F₂}$.
Repeating this argument, we find that all the $X_{F_k}$'s are the same.
It follows that $X_R$ can be defined as the circle passing through
$z(x₁)$ and $z(x₂)$ that is orthogonal to $X_{F₁}$.

Since circles around regions $X_R$ and circles around bow-tie faces $X_F$
are orthogonal, it follows that the circles $X_R$, $X_{R'}$ for adjacent regions
are tangent.


Now we consider $L$ with half-twists.
Let $L'$ be the FAL obtained from $L$ by removing half-twists.
From \rmkref{r:half-twist-relation}, we learned that
an algebraic solutions $Ω$ for $L$ and $L'$
that satisfy the criteria in \lemref{l:FAL-S3-pure-imag}
are related by changing normalizations.
From \rmkref{r:meridian-scaling-geom-realz},
different normalizations amounts to changing the
parametrizations of the points $z(x_k)$, but not their relative positions;
in other words, the geometry is invariant with respect to normalization choices.
Thus, the arguments above can be transfered from $L'$ almost verbatim to $L$.

\textbf{Case $L ⊆ \TT$}:
We repeat the arguments above to $\wdtld{B_L}$ in place of $B_L$,
and we get a circle packing realizing $\wdtld{Γ_L}$.
From the discussion before the proposition,
the geometric realization of $\wdtld{B_L}$,
and hence the circle packing realizing of $\wdtld{Γ_L}$, is biperiodic
(invariant under a $\ZZ ⊕ \ZZ$ group of translations),
so the quotient is a circle packing realizing $Γ_L$.
\end{proof}

Next we show that the criteria from the other two lemmas,
Lemma \ref{l:FAL-S3-ortn} and \ref{l:FAL-S3-shape-param},
enforce additional properties on the circle packing,
namely local univalence (\defref{d:univalence})
and local order-preserving (\defref{d:locally-order}).

\begin{proposition}
\label{p:crit-02-circ-packing}
Let $L$ be a FAL in $S³$ (resp. $\TT$).
Let $Ω$ be an algebraic solution satisfying the criteria in
\lemref{l:FAL-S3-pure-imag} (resp. \lemref{l:FAL-T2-pure-imag}),
so that by \prpref{p:crit-01-circ-packing},
we have a circle packing in $S²$ (resp. $T²$) realizing the region graph $Γ_L$
obtained from a geometric realization $z(-)$ of $Ω$ on $B_L$.
If $Ω$ also satisfies the criteria in \lemref{l:FAL-S3-ortn}
(resp. \lemref{l:FAL-T2-ortn}),
then the circle packing is locally univalent.
\end{proposition}
\begin{proof}
Since we only need to work locally,
the proof for the $\TT$ case is essentially the same as for $S³$,
only having to change $B_L$ to $\wdtld{B_L}$.
Thus we only work with the $S³$ case.
As before, we assume that $L$ has no half-twists,
using \rmkref{r:half-twist-relation} to relate to $L$'s with half-twists.
Let the circle packing be denoted $\{X_{•}\}$.
We describe an interior filling locally at each point of tangency,
and show that these choices are globally consistent.

Consider a vertex $x ∈ B_L$ corresponding to an oriented segment $a_j$,
which goes from augmentation circle $C_i$ to $C_{i'}$.
The two peripheral edges $e_j^l$ and $e_j^r$
travel along $a_j$ from the triangular face $F_{C_i}^+$ to $F_{C_{i'}}^+$.
Let us suppose, without loss of generality, that
$e_{i,+}¹$ and $e_{i',+}¹$ (instead of $e_{i,+}²$ and/or $e_{i',+}²$)
run from the ends of $a_j^l$ to $a_j^r$.
Then the four peripheral edges form the boundary of a rectangle
$a_j^l e_{i',+}¹ (a_j^r)^\inv (e_{i,+}¹)^\inv$,
which is non-degenerate by the criterion $u(e_j^l) ∈ i\RR^{<0}$,
and $a_j^l e_{i',+}¹ (a_j^r)^\inv (e_{i,+}¹)^\inv$
goes counterclockwise about the rectangle;
in particular, the rectangle lies to the left of $a_j^l$
and to the right of $a_j^r$.

Let $R$ and $R'$ be the regions to the left and right of $a_j$ respectively.
Let $z = z(x)$ be the point of tangency between $X_R$ and $X_{R'}$.
Let $H$ be the horosphere centered at $z$.
We find segments on $H$ corresponding to the four peripheral edges
$a_j^l, a_j^r, e_{i,+}¹, e_{i',+}¹$, which we also denote with the same symbols.
Let $β^l$ and $β^r$ be the parallel lines running through $a_j^l$ and $a_j^r$
respectively (with the same orientations).
Denote by $Q^l$ (resp. $Q^r$) the half-plane demarcated by
$β^l$ (resp. $β^r$) which does not contain the rectangle.
($Q^l$ is to the right of $β^l$, and $Q^r$ is to the left of $β^r$.)
Projecting $β^l$ onto $∂\HH$ by emanating geodesics from $z$,
we get a circle on $∂\HH$ that coincides with $X_R$;
likewise, $β^r$ projects onto $X_{R'}$.
Similarly, $Q^l$ and $Q^r$ project onto disks in $∂\HH$
whose boundaries are $X_R$ and $X_{R'}$ respectively;
we take these disks to be the interior fillings for $X_R$ and $X_{R'}$.

For a vertex $x ∈ B_L$ corresponding to an augmentation circle $C_i$,
we carry out a similar procedure, mostly swapping out one set of symbols for another.
Replace $a_j^l$ by $e_i^{μ2}$ and $a_j^r$ by $e_i^{μ1}$.
Replace both $e_{i,+}¹$ and $e_{i',+}¹$ by $e_{i,+}⁰$.
Then $e_i^{μ1} e_{i,+}⁰ (e_i^{μ2})^\inv (e_{i,+}⁰)^\inv$ forms a rectangle.
Note that this rectangle is already guaranteed to be non-degenerate
by the criteria in \lemref{l:FAL-S3-pure-imag};
however, we still need the new criteria to ensure that
$e_i^{μ1} e_{i,+}⁰ (e_i^{μ2})^\inv (e_{i,+}⁰)^\inv$ goes around the rectangle
in a counterclockwise direction, and in particular the rectangle lies
to the left of $e_i^{μ2}$ and to the right of $e_i^{μ1}$.
Here we take $β^l$ to be the line passing through $e_i^{μ2}$
and $β^r$ the line passing through $e_i^{μ1}$
(again, with the same orientations).
We take $R$ and $R'$ to be the regions that $C_i$ punctures
($R$ meeting $e_i^{μ2}$ and $R'$ meeting $e_i^{μ1}$).
Then the rest of the procedure is repeated.

It remains to check that these choices of fillings is globally a well-defined
interior filling, since it is clearly locally univalent by construction.
Consider a region $R$ and its corresponding face $F_R$.
Let $a_j$ be a segment that is part of the boundary of $R$.
If the orientation of $a_j$ agrees with the outward orientation on $∂R$,
then the peripheral edge in $F_R$ near $a_j$ is $a_j^l$;
on the other hand, if the orientations disagree,
then the peripheral edge in $F_R$ near $a_j$ is $a_j^r$.
Now let $p$ a point of intersection of some augmentation circle $C_i$ with $R$,
which gives rise to a peripheral edge, $e_i^{μ1}$ or $e_i^{μ2}$, of $F_R$.
In the case it is $e_i^{μ2}$, the orientation of $e_i^{μ2}$
agrees with the outward orientation on $R \backslash p$,
and in the other case, the orientation of $e_i^{μ1}$ disagrees.
To summarize, the orientation agreeing cases are $a_j^l$ and $e_i^{μ2}$,
and the orientation disagreeing cases are $a_j^r$ and $e_i^{μ1}$.
The former are those peripheral edges which lie to the left of their rectangle,
while the latter are those which lie to the right of their rectangle.
In other words, we can choose an orientation on $X_R$ so that
the projection from $β^l$ is orientation-preserving
and the projection from $β^r$ is orientation-reversing,
and moreover the rectangles all lie ``to the left'' of $X_R$,
so the filling for $X_R$ is the disk to its right.
\end{proof}

\begin{proposition}
\label{p:crit-03-circ-packing}
Let $L$ be a FAL in $S³$ (resp. $\TT$).
Let $Ω$ be an algebraic solution satisfies the criteria in
\lemref{l:FAL-S3-pure-imag} (resp. \lemref{l:FAL-T2-pure-imag}),
so that by \prpref{p:crit-01-circ-packing},
we have a circle packing in $S²$ (resp. $T²$) realizing the region graph $Γ_L$
obtained from a geometric realization $z(-)$ of $Ω$ on $B_L$.
If $Ω$ also satisfies the criteria in \lemref{l:FAL-S3-shape-param}
(resp. \lemref{l:FAL-T2-shape-param}),
then the circle packing is locally order-preserving.
\end{proposition}

\begin{proof}
For distinct vertices $v₀,v₁,v₂,v₃$ of $F_R$ in cyclic order
(not necessarily consecutive), the criterion implies that the
geometric shape parameter $ζ_{\wdtld{γ};\wdtld{γ'},\wdtld{γ''}} ∈ (0,1)$
where $\wdtld{γ},\wdtld{γ'},\wdtld{γ''}$ are the geodesics with endpoints
$\{z(v₁),z(v₂)\}, \{z(v₀),z(v₁)\}, \{z(v₂),z(v₃)\}$ respectively.
Thus, as points on $X_R$, $z(v₀),z(v₁),z(v₂),z(v₃)$ are also in some cyclic order,
which means that the circle packing is locally order-preserving.
\end{proof}

Before we state the main theorem of this section,
we note that the criteria of 
\lemref{l:FAL-S3-ortn}/\ref{l:FAL-T2-ortn})
(responsible for local univalence)
are slightly too stringent, in the following sense.
If $Ω$ is an algebraic solution satisfying those criteria
(in addition to those of \lemref{l:FAL-S3-pure-imag}/\ref{l:FAL-T2-pure-imag}),
then the (pointwise) complex conjugate $\ov{Ω}$ doesn't satisfy
the criteria of \lemref{l:FAL-S3-ortn}/\ref{l:FAL-T2-ortn},
but clearly leads to a geometric realization of $B_L$/$\wdtld{B_L}$
that is flipped (under a complex conjugation) relative to that constructed from $Ω$,
and likewise for the circle packing realizing $Γ_L$/$\wdtld{Γ_L}$.
Local univalency of a circle packing is preseved under complex conjugation,
so it is natural to relax the condition defining
the left-hand side of the second map in the theorem.
(We do not need to do that for the first and third map,
since $\ov{Ω}$ would already be in those sets.)

\begin{theorem}
\label{t:FAL-circ-packing}
Let $L$ be a FAL in $S³$ (resp. $\TT$).
\prpref{p:crit-01-circ-packing} defines an assignment
\[
\Set{
\begin{array}{l}
\text{algebraic solution satisfying} \\
\text{criteria in \lemref{l:FAL-S3-pure-imag} (resp. \ref{l:FAL-T2-pure-imag})}
\end{array}
}
\to
\Set{
\begin{array}{l}
\text{circle packing realizing } Γ_L \text{ in }
	S² \text{(resp. }T²\text{)}\\
\text{up to conformal equivalence}
\end{array}
}
\]
and by \prpref{p:crit-02-circ-packing} and \prpref{p:crit-03-circ-packing},
this assignment restricts to
\[
\Set{
\begin{array}{l}
\text{algebraic solution or its complex conjugate} \\
\text{satisfying criteria \lemref{l:FAL-S3-pure-imag} (resp. \lemref{l:FAL-T2-pure-imag})} \\
\text{and \lemref{l:FAL-S3-ortn} (resp. \lemref{l:FAL-T2-ortn})} \\
\end{array}
}
\to
\Set{
\begin{array}{l}
\text{locally univalent} \\
\text{circle packing realizing } Γ_L \text{ in }
	S² \text{(resp. }T²\text{)}\\
\text{up to conformal equivalence}
\end{array}
}
\]
\[
\Set{
\begin{array}{l}
\text{algebraic solution satisfying} \\
\text{criteria in \lemref{l:FAL-S3-pure-imag} (resp. \ref{l:FAL-T2-pure-imag})} \\
\text{and \lemref{l:FAL-S3-shape-param} (resp. \ref{l:FAL-S3-shape-param})}
\end{array}
}
\to
\Set{
\begin{array}{l}
\text{locally order-preserving} \\
\text{circle packing realizing } Γ_L \text{ in }
	S² \text{(resp. }T²\text{)}\\
\text{up to conformal equivalence}
\end{array}
}
\]
Then these maps are bijections.
\end{theorem}

\begin{proof}
We construct the inverse to the first map.
We will assume that $L$ has no half-twists;
if it does, following \rmkref{r:half-twist-relation},
we apply the construction of $Ω$ as below to $L'$,
then convert it to an algebraic solution for $L$.
We also mostly only show work for $L$ in $S³$, as all the constructions are
made with local choices/data; we make a comment concerning the $\TT$ case later.

Let $Ξ = \{X_R\}$ be a circle packing realizing $Γ_L$.
Consider a point of tangency $z$ of $Ξ$ corresponding to a vertex $v$ of $B_L$.
We want to choose a parametrization of $z$ so that
the criteria of \lemref{l:FAL-S3-pure-imag} are satisfied.
Suppose $v$ corresponds to a segment $a_j$, ending at some $C_i$.
Let $γ = γ_i^k, γ' = γ_i⁰$, with $k = 1$ or 2, be the crossing arcs,
with the peripheral edge $e_{i,+}^k$ going from $γ$ to $γ'$.
Let $x,x'$ be the other endpoints of $γ,γ'$ (as edges of $B_L$), and
let $y,y'$ be the points of tangency in $Ξ$ corresponding to $x,x'$, respectively.
Let $γ_\HH, γ_\HH'$ be the geodesics in $\HH$ connecting $z$ to $y,y'$
respectively.
Then there is a unique parametrization of $z$ such that
the geometric edge label from $γ_\HH$ to $γ_\HH'$ is $1/2$,
which is expected of $u(e_{i,+}^k$.
We can consider a similar argument for vertices $v$ corresponding to
an augmentation circle $C_i$, using $e_i^{μ1}$ instead of $e_{i,+}^k$.

Thus do we choose parametrizations for the points of tangency of $Ξ$,
with which we can compute geometric crossing and edge labels.
We construct the labeling $Ω$ on $L$ out of the relevant geometric labels;
it is clear that the circle packing constructed from $Ω$
as in \prpref{p:crit-01-circ-packing} is $Ξ$.
We leave it to the reader to check that $Ω$ satisfies all the criteria
of \lemref{r:half-twist-relation}.
For the $\TT$ case, it suffices to note that since the circle packing for
$\wdtld{B_L}$ realized in $\CC$ is biperiodic, then the geometric labels
are invariant under the biperiodicty, so $Ω$ is well-defined.

For the third map, considering a locally order-preserving $Ξ$,
the construction above of $Ω$ easily implies the criteria of
\lemref{l:FAL-S3-shape-param}/\ref{l:FAL-T2-shape-param}.

For the second map, consider locally univalent $Ξ$,
and the $Ω$ constructed above that gives rise to it.
Again the case of $\TT$ is similar to $S³$, so we only consider the latter.
If the criteria of \lemref{l:FAL-S3-ortn} are violated at every vertex,
that is, $\geomu{}(e_{i,+}⁰) / u(e_i^{μ1}) ∈ i\RR^{>0}$ for all $C_i$
and $\geomu{}(e_j^l) ∈ i\RR^{>0}$ for all $a_j$,
then the complex conjugate labeling $\ov{Ω}$ satisfies
the criteria of \lemref{l:FAL-S3-ortn}.
Suppose otherwise, say the criteria is satisfied at some vertex $v ∈ B_L$.
Let $R$ be a region that has $v$ in its boundary,
and let $v'$ be another vertex in its boundary.
From the proof of \prpref{p:crit-03-circ-packing},
it follows easily that the criteria must be satisfied at $v'$ as well.
Repeatedly applying this argument, this shows that $Ω$ indeed satisfies
the criteria of \lemref{l:FAL-S3-ortn}.
\end{proof}

Toegether with \lemref{l:univalence-is-local}, we have:

\begin{corollary}
\label{c:geometric}
Let $L$ be a FAL in $S³$ (resp. $\TT$),
and let $Ω$ be an algebraic solution to the TT method on $L$.
Then $Ω$ is the geometric labeling if and only if it satisfies the criteria in
Lemmas \ref{l:FAL-S3-pure-imag}, \ref{l:FAL-S3-ortn},
and \ref{l:FAL-S3-shape-param}
(resp. \ref{l:FAL-T2-pure-imag}, \ref{l:FAL-T2-ortn},
and \ref{l:FAL-T2-shape-param}).
\end{corollary}


\section{Examples}
\label{s:examples}

The following example is the TT method on the standard
Square Weave in the thickened torus.
We use slightly different notation from \secref{s:TT-thickened-torus}
to reduce clutter (see \figref{f:square-weave}).
We first deal with labels from the original TT method
(that is, labels not involving $\north$ nor $\south$).
The edge equations of the TT method give us the following set of equations:

\begin{figure}
\centering
\includegraphics[width=10cm]{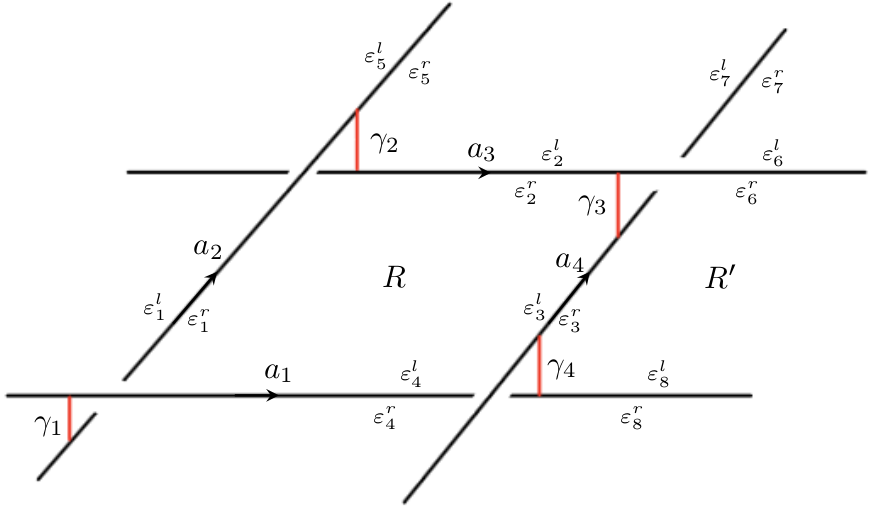}
\includegraphics[width=5cm]{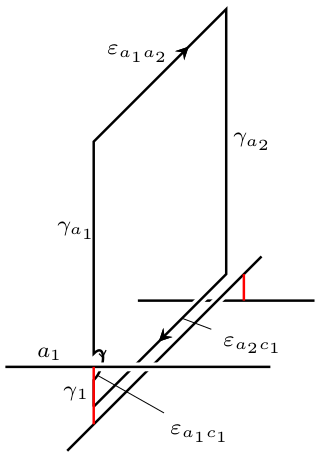}
\caption{The square weave.
}
\label{f:square-weave}
\end{figure}

\begin{equation}
\label{e:square-edge-eqn}
u₁^r - u₁^l =
u₂^r - u₂^l =
-(u₃^r - u₃^l) =
-(u_4^r - u_4^l) =
-(u_5^r - u_5^l) =
-(u_6^r - u_6^l) =
u_7^r - u_7^l =
u_8^r - u_8^l =
1
\end{equation}

Under a symmetry of the left diagram in \figref{f:square-weave},
namely the clockwise rotation by $90^{∘}$ preserving $R$,
the peripheral edges and crossing arcs get sent to each other:

\begin{equation}
\label{e:rot-90-sym-label}
ε₁^r
\mapsto ε₂^r
\mapsto -ε₃^l
\mapsto -ε_4^l
\mapsto ε₁^r
\;\;\;
;
\;\;\;
ε₁^l
\mapsto ε₂^l
\mapsto -ε₃^r
\mapsto -ε_4^r
\mapsto ε₁^l
\;\;\;
;
\;\;\;
γ_{c₁}
\mapsto γ_{c₂}
\mapsto γ_{c₃}
\mapsto γ_{c_4}
\mapsto γ_{c₁}
\end{equation}

Note that the meridians also get sent to meridians,
but with sign changes:
If $μ_{a_i}$ refers to the meridian
on the overpass segment $a_i$,
then

\begin{equation}
\label{e:rot-90-sym-meridian}
μ_{a₁} \mapsto
-μ_{a₂} \mapsto
-μ_{a₃} \mapsto
μ_{a_4} \mapsto
μ_{a₁}
\end{equation}

Thus, we have:

\begin{equation}
\label{e:u-w}
u := u₁^r = u₂^l = u₃^l = u_4^r
\;\;\;
;
\;\;\;
w := w₁ = -w₂ = w₃  = -w_4
\end{equation}

where in the left half, the minus signs in \eqnref{e:rot-90-sym-label}
get canceled out by the minus signs in \eqnref{e:rot-90-sym-meridian}).

We get the following shape parameters for the region $R$:

\begin{equation}
ζ_{1,R} = \frac{+w₁}{u_4^l u₁^r}
= \frac{w}{u²}
\;\; , \;\;
ζ_{2,R} = \frac{-w₂}{u₁^r u₂^r}
= \frac{w}{u²}
\;\; , \;\;
ζ_{3,R} = \frac{+w₃}{u₂^r u₃^l}
= \frac{w}{u²}
\;\; , \;\;
ζ_{4,R} = \frac{-w_4}{u₃^l u_4^l}
= \frac{w}{u²}
\end{equation}

As one would expect from the rotation symmetry,
the shape parameters above are all equal.

The shape parameters satisfy the following relations
(from $f_4 = 0$ in \lemref{l:region-eqn-k4}):
\begin{equation}
ζ_{2,R} + ζ_{3,R} - 1 = 0
\;\;,\;\;
ζ_{3,R} + ζ_{4,R} - 1 = 0
\;\;,\;\;
ζ_{1,R} + ζ_{2,R} - 1 = 0
\end{equation}
Since $ζ_{1,R} = ζ_{2,R} = ζ_{3,R} = ζ_{4,R}
= \frac{w}{u²}$,
each of the above relations reduce to   
$2(\frac{w}{u²}) - 1 = 0
\implies w = \frac{1}{2}u²$.

By applying the same process to $R'$,
we have
$ζ_{1,R'} = ζ_{2,R'} = ζ_{3,R'} = ζ_{4,R'}
= \frac{1}{2}$,
and $ζ_{1,R'} = -\frac{w}{u'^2}$,
where $u' = u₁^l = u_8^l = u₃^r = u_6^r$.
But we also have, by edge equations \eqnref{e:square-edge-eqn}, that
$u' = u₃^r = u₃^l - 1 = u - 1$.
So we have
$w = -\frac{1}{2}(u-1)²$.

Thus we have $-u² = (u-1)²$, from which we obtain
$u = \frac{1}{2} \pm \frac{1}{2}i$ and 
$w = \pm \frac{1}{4}i$


Observe that reflection of the link diagram across a link component, say $a₂$,
is a symmetry of the link diagram,
which induces an orientation-reversing homeomorphism of the link complement
that also fixes each link component,
thus acts on each peripheral torus by an anti-$\CC$-linear automorphism.
For example, this symmetry sends
$μ₂$ to $-μ₂$ and thus acts as $z \mapsto -\ov{z}$;
since it sends $ε₁^l$ to $ε₁^r$,
we have
$u = u₁^r = -\ov{u₁^l} = -\ov{u'} = 1 - \ov{u}$,
which is consistent with $u = \frac{1}{2} \pm \frac{1}{2}i$.

\ \\

Next, we solve the equations involving
new crossing labels and edge labels,
i.e., those involving $\north,\south$ (the Hopf link components).
 
The vertical region in \figref{f:square-weave}
gives rise to a vertical region equation.
It is a truncated ideal triangle
bounded by crossing arcs and peripheral edges
$γ_{a₁}, ε_{a₁ c₁}, γ_{c₁}, ε_{a₂ c₁}, γ_{a₂}, ε_{a₁ a₂}$,
and we have (by \lemref{l:region-eqn-k3}):

\begin{equation}
\label{e:hopf-link-wall}
\frac{w_{a₁}}{u_{a₁ a₂} u_{a₁ c₁}}
=
\frac{w₁}{u_{a₁ c₁} u_{a₂ c₁}}
=
-\frac{w_{a₂}}{u_{a₁ a₂} u_{a₂ c₁}}
= 1
\end{equation}

The symmetry of reflecting across $a₂$
preserves the vertical region in \figref{f:square-weave},
so $u_{a₂ c₁} = -\frac{1}{2}(u₁^l + u₁^r) = -\frac{1}{2}(u' + u)
= \mp\frac{1}{2}i$.
Symmetry across $a₁$ gives $u_{a₁ c₁} = -\frac{1}{2}$.



Note that $u_{a₁ a₂}$ depends on the choice of meridian on $\north$;
we choose the meridian to be homotopic to $a₂$,
so that $u_{a₁ a₂}  = \frac{1}{2}$.

Then from \eqnref{e:hopf-link-wall},
\begin{equation}
\label{e:blue-region-consequences}
w_{a₁} = -\frac{1}{4}
\;\; , \;\;
w₁ = \pm \frac{1}{4} i
\;\; , \;\;
w_{a₂} = \pm \frac{1}{4} i
\;\;\;\;
,
\;\;\;\;
u_{a₂ c₁} = \mp\frac{1}{2} i
\;\; , \;\;
u_{a₁ c₁} = -\frac{1}{2}
\;\; , \;\;
u_{a₁ a₂} = \frac{1}{2}
\end{equation}


The clockwise $90^{∘}$-rotation about $R$ of the diagram
sends $γ_{a₁}$ to $γ_{a₂}$,
so we must have, and indeed we do, $|w_{a₁}| = |w_{a₂}|$.
The rotation sends the meridian $μ_N$ at $\north$ to $-i$,
and further careful consideration shows that we must have
$w_{a₁} = -i w_{a₂}$,
and thus we pick out a sign for $u,w$ of \eqnref{e:u-w}:



\begin{equation}
u = \frac{1}{2} - \frac{1}{2} i
\;\;\;,\;\;\;
w = - \frac{1}{4} i
\end{equation}

Note that choosing the other sign would give the algebraic solution
for the same link complement but with reversed orientation
(since the rotation would send the meridian $μ_N$ at $\north$ to $i$).

All other labels are obtained by symmetry
(e.g. rotating by $180^{∘}$ about a crossing).
The algebraic solution is unique,
and hence is also the geometric solution.




\begin{example}
\label{x:counter-ex}
In \figref{f:counter-xmp-FAL},
we present an example of a FAL in $S³$ with more than one
algebraic solution to the TT method.
By the correspondence between algebraic solutions and circle packings
discussed in \thmref{t:FAL-circ-packing},
we describe two such solutions, one geometric and the other not geometric,
by presenting their corresponding circle packings.

\begin{figure}
\centering
\includegraphics[height=6cm]{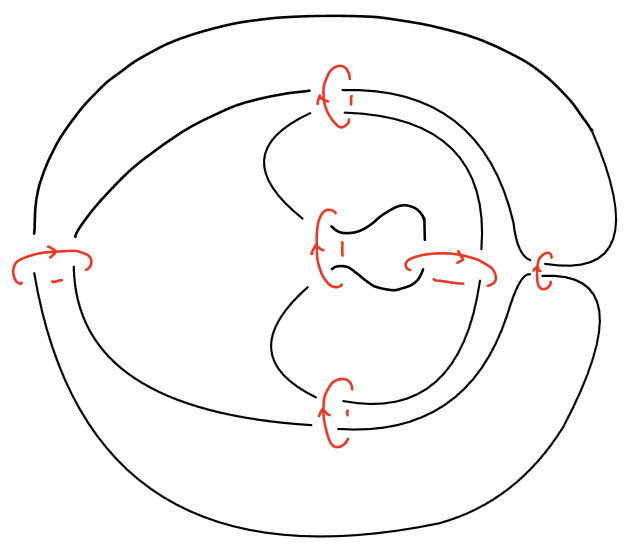}
\includegraphics[height=6cm]{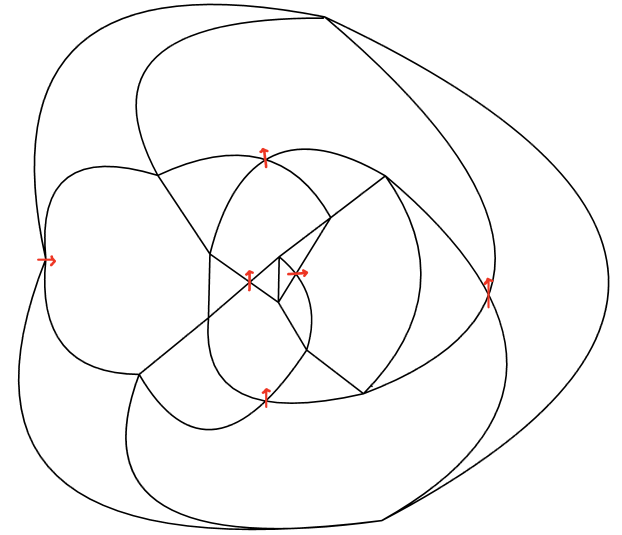}
\caption{
Example of FAL in $S³$ with more than one algebraic solution.
The red arrows on the right diagram are the centers of the bow-tie.
}
\label{f:counter-xmp-FAL}
\end{figure}

\begin{figure}[H]
\centering
\includegraphics[height=6cm]{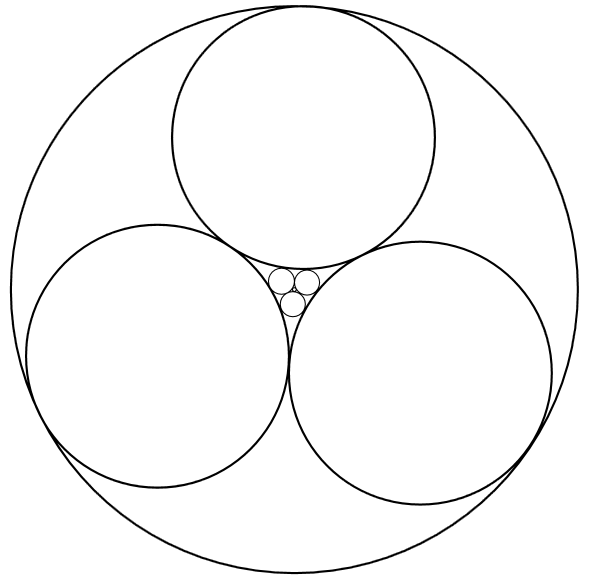}
\includegraphics[height=6cm]{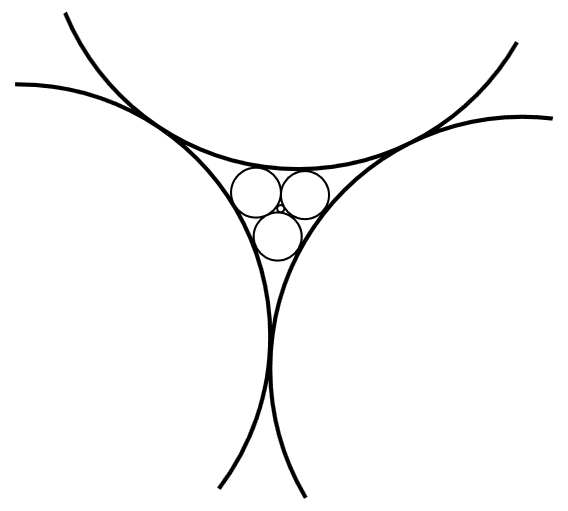}
\caption{
Circle packing corresponding to geometric solution;
the right diagram is a close-up of the center of the left diagram;
note in the very center there is one more tiny circle,
which corresponds to the smallest (non-bow-tie) region in
\figref{f:counter-xmp-FAL}.
}
\label{f:counter-xmp-good}
\end{figure}

\begin{figure}[H]
\centering
\includegraphics[height=6cm]{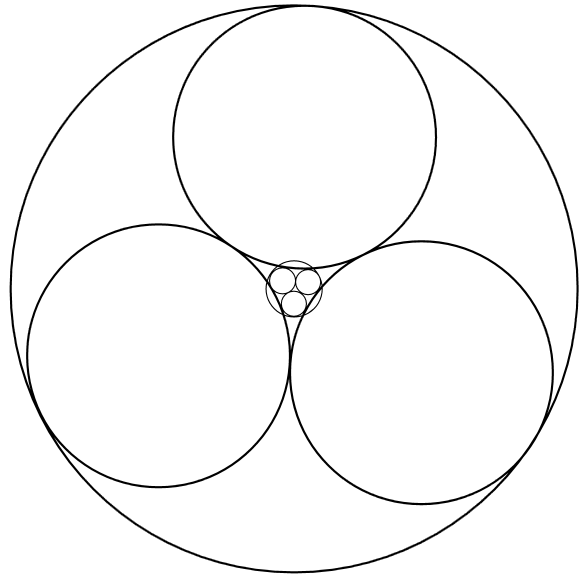}
\includegraphics[height=6cm]{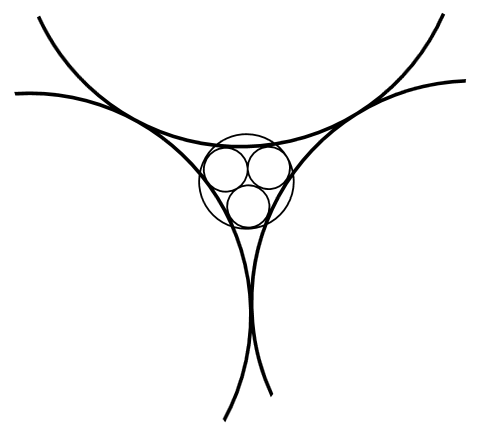}
\caption{
Circle packing corresponding to non-geometric solution.
The tiny circle in \figref{f:counter-xmp-good} is replaced
by a bigger circle which is still tangent to the same three circles.
Local univalence can be satisfied
(by choosing the interior of this circle to be outside),
but univalence clearly cannot be satisfied.
}
\label{f:counter-xmp-bad}
\end{figure}

\end{example}

\bibliographystyle{plain}
\bibliography{references}



\end{document}